\newtheorem{thm}{Theorem}[section]
\newtheorem{la}[thm]{Lemma}
\newtheorem{Defn}[thm]{Definition}
\newtheorem{Remark}[thm]{Remark}
\newtheorem{prop}[thm]{Proposition}
\newtheorem{Number}[thm]{\!\!}
\newenvironment{rem}{\begin{Remark}\rm}{\end{Remark}}
\newenvironment{numba}{\begin{Number}\rm}{\end{Number}}
\newenvironment{proof}{{\noindent\bf Proof.}}%
                  {\nopagebreak\hspace*{\fill}$\Box$\medskip\medskip\par}   
\newcommand{\Punkt}{\nopagebreak\hspace*{\fill}$\Box$}
\newcommand{\wb}{\overline}
\newcommand{\ve}{\varepsilon}
\newcommand{\at}{\symbol{'100}}
\newcommand{\wt}{\widetilde}
\newcommand{\mto}{\mapsto}
\newcommand{\isom}{\cong}
\newcommand{\N}{{\mathbb N}}
\newcommand{\R}{{\mathbb R}}
\newcommand{\K}{{\mathbb K}}
\newcommand{\Z}{{\mathbb Z}}
\newcommand{\C}{{\mathbb C}}
\newcommand{\cO}{{\mathcal O}}
\newcommand{\cB}{{\mathcal B}}
\newcommand{\sub}{\subseteq}
\DeclareMathOperator{\im}{im}
\DeclareMathOperator{\id}{id}
\newcommand{\obs}{{\scriptstyle{\rm s}}}
\newcommand{\obu}{{\scriptstyle{\rm u}}}
\DeclareMathOperator{\Lip}{Lip}
\DeclareMathOperator{\spread}{spread}
\begin{document}
\begin{center}
{\Large\bf Grobman-Hartman Theorems\\[.8mm]
for Diffeomorphisms of Banach Spaces\\[2.6mm]
over Valued Fields}\\[6mm]
\renewcommand{\thefootnote}{\fnsymbol{footnote}} 
{\bf Helge Gl\"{o}ckner\footnote{The research was carried out at PUC,
Santiago de Chile, supported by
the German\linebreak
Research Foundation 
(DFG), grant GL 357/8-1
and the
`Research Network on Low\linebreak
Dimensional Systems,' PBCT/CONICYT, Chile.
Further work on the manuscript was carried out at the University of Newcastle (N.S.W.), again supported by GL 357/8-1.}}\vspace{4mm} 
\end{center} 
\renewcommand{\thefootnote}{\arabic{footnote}} 
\setcounter{footnote}{0} 
\begin{abstract}
\hspace*{-6mm}Consider a local diffeomorphism $f$
of an ultrametric Banach space over an ultrametric field,
around a hyperbolic fixed point $x$.
We show that, locally, the system is
topologically conjugate to the linearized system.
An analogous result is obtained for local diffeomorphisms
of real $p$-Banach spaces (like $\ell^p$), for $p\in \;]0,1]$.
More generally, we obtain a local linearization
if $f$ is merely a local homeomorphism
which is strictly differentiable at a hyperbolic fixed point~$x$.
Also a new global version of the
Grobman-Hartman
\mbox{theorem} is provided. It applies to Lipschitz perturbations
of hyperbolic automorphisms of Banach spaces over valued fields.
The local conjugacies $H$ constructed are not only homeo\-morphisms,
but both $H$ and $H^{-1}$ are H\"{o}lder.
We also study the dependence of $H$ and $H^{-1}$ on $f$
(keeping $x$ and $f'(x)$ fixed).
\end{abstract}
{\footnotesize {\em Classification}:
37C15 (Primary)
26E30,
46A16,
46S10
(Secondary)\\[2.5mm]
{\em Key words}: Dynamical system, fixed point,
linearization, topological conjugacy,
valued field, ultrametric field,
local field, non-archimedean analysis,
Grobman-Hartman theorem, Lipschitz perturbation, $p$-Banach space,
H\"{o}lder continuity, parameter dependence}\\[3.4mm]
\section{\hspace*{-3mm}Introduction and statement of main results}\label{secintro}
The linearization problem
for formal or analytic diffeomorphisms
of a complete ultrametric field $\K$ (or $\K^n$),
via formal or analytic conjugacies,
has attracted interest in non-archimedean
analysis (see \cite{HaY}, \cite{Lin} and \cite{Vie}).
Since an analytic linearization is
not always possible,
it is natural to ask whether at least
a (local) topological conjugacy from the given system to its linearized
version is available.
In the current article, we answer this question in the affirmative
(under natural hyperbolicity hypotheses).\\[2.5mm]
More generally, for some of our results
we can work with
a valued field $(\K,|.|)$
whose absolute value
$|.|$ is assumed to define a non-discrete topology on $\K$
(such a field is called ultrametric if
$|x+y|\leq\max\{|x|,|y|\}$ for all $x,y\in \K$).
If $E$ is a Banach space over $\K$,
we shall say that an automorphism $A\colon E\to E$
of topological vector spaces is
\emph{hyperbolic} if
if there exist $A$-invariant
vector subspaces $E_{ \obs }$ and $E_{ \obu }$ of $E$ such that
$E=E_{ \obs } \oplus E_{ \obu }$,
and a norm $\|.\|$ on $E$ defining its topology, such that
\begin{equation}\label{hypb1}
\|x+y\|=\max\{\|x\|,\|y\|\}\quad
\mbox{for all $x\in E_{\obs}$ and $y\in E_{\obu}$}
\end{equation}
and
\begin{equation}\label{hypb2}
\|A|_{E_{\obs}}\|< 1\quad\mbox{ and }\quad \|A^{-1}|_{E_{\obu}}\|<1
\end{equation}
holds for the operator norms with respect to $\|.\|$
\,(then call $\|.\|$ \emph{adapted to $A$}).\\[2.5mm]
Our two main theorems
are versions of the local and global Grobman-Hartman theorem
for $C^1$-diffeomorphisms of $\R^n$
(see \cite{Gr1}, \cite{Gr2}, \cite{Ha1}, \cite{Ha2}, \cite{Irw},
\cite{PdM} and \cite{Rob} for these classical results
and their analogues for flows). Our presentation is particularly
indebted to~\cite{Rob}. We first discuss global conjugacies: \\[2.5mm]
{\bf Theorem A} (Global Grobman-Hartman Theorem)
\emph{Let $E$ be a Banach space over a valued field $(\K,|.|)$
and $A \colon E\to E$ be an automorphism
of topological vector spaces which is hyperbolic.
Let $\|.\|\colon E\to [0,\infty[$ be a norm adapted to~$A$
and $g\colon E\to E$ be a bounded Lipschitz map
such that}
\[
\Lip(g)<\|A^{-1}\|^{-1},\quad
\|A^{-1}|_{E_\obu}\|(1+\Lip(g))<1,\quad\mbox{and}\quad
\|A|_{E_\obs}\|+\Lip(g)<1.
\]
\emph{Then there exists a unique
bounded continuous map
$v\colon E\to E$ such that}
\begin{equation}
(A+g)\circ (\id_E+v)\;=\; (\id_E+v)\circ A\,.
\end{equation}
\emph{The map $\id_E+v$ is a homeomorphism
from~$E$ onto~$E$, and both $v$ and
$w:=(\id_E+v)^{-1}-\id_E$ are H\"{o}lder.
Moreover, $w$ is the unique bounded
continuous map such that}
\begin{equation}
A \circ (\id_E+w)\;=\; (\id_E+w)\circ (A+g)\,.
\end{equation}
\emph{If $g(0)=0$, then also $v(0)=0$.}\\[2.5mm]
A H\"{o}lder exponent $\alpha$ for $v$ and $w$
can be described explicitly (Remark~\ref{remtherem} (a) and (b)).
See also \cite{BaV} for a recent discussion of the H\"{o}lder properties
of $v$ and $w$ in the real case (if $g(0)=0$).\\[2.5mm]
To obtain a local linearization, following
Hartman \cite{Ha2},
we shall only require
strict differentiability of $f$ at the fixed point.
Let $(E,\|.\|_E)$ and $(F,\|.\|_F)$
be Banach spaces over a valued field $(\K,|.|)$,
$U\sub E$ be open and $z\in U$.
We recall from Bourbaki \cite{Bo1}:
A map
\begin{equation}\label{stupideq}
f\colon U\to F
\end{equation}
is called \emph{strictly differentiable
at $x$} if there exists a (necessarily unique)
continuous linear map $f'(x)\colon E\to F$
such that
\begin{equation}
\frac{\|f(y)-f(z)-f'(x)(y-z)\|_F}{\|y-z\|_E}\; \to \; 0
\end{equation}
if $(y,z)\in U\times U\setminus \{(u,u)\colon u\in U\}$ tends to $(x,x)$.
If we write $f(y)=f(x)+f'(x)(y-x)+R(y)$,  then $f$ is strictly differentiable at $x$ with derivative $f'(x)$
if and only if $R$ is Lipschitz on the ball $B^E_r(x)$ for
small $r>0$, and
\begin{equation}\label{stricvialip}
\lim_{r\to 0} \, \Lip(R|_{B^E_r(x)})\; =\; 0
\end{equation}
(using standard notation as in \ref{defbaselip}).\footnote{Precisely this
requirement on the non-linearity is also imposed in \cite{Ha2}.}\\[2.5mm]
If $E=F$ and $f$ is strictly differentiable at $x\in U$,
we call $x$ a \emph{hyperbolic fixed point of $f$}
if $f(x)=x$ and $f'(x)\colon E\to E$ is a hyperbolic automorphism.\\[2.5mm]
{\bf Theorem B} (Local Grobman-Hartman Theorem)
\emph{Let $(\K,|.|)$ be an ultrametric field
and $E$ be an ultrametric Banach space over $(\K,|.|)$.
Or let $\K=\R$,
$|.|$ be an absolute value on $\R$ which defines
the usual topology on $\R$,
and $E$ be a Banach space over $(\R,|.|)$.
Let $P,Q\sub E$ be open
and $x \in P\cap Q$. Let
$f\colon P\to Q$ be a homeomorphism
which is strictly differentiable at $x$,
with differential $A:=f'(x)$,
and for which $x$ is a hyperbolic fixed point.
Then there exists an open $0$-neighbourhood $U\sub E$
and a bi-H\"{o}lder homeomorphism $H\colon U\to V$ onto an open
subset $V\sub P$, such that $H(0)=x$ and}
\begin{equation}\label{goaleq}
f(H(y))=H(A(y))\qquad \mbox{for all $\; y\in U\cap A^{-1}(U)$.}\vspace{1mm}
\end{equation}
Recall that the absolute values $|.|$ on $\R$
defining its usual topology are precisely
the $p$-th powers of the usual absolute value $|.|_\R$,
i.e., $|.|=(|.|_\R)^p$, with $p\in\;]0,1]$.
The Banach spaces
$E$ over $(\R,|.|_\R^p)$
are also known as real \emph{$p$-Banach spaces}
in the functional-analytic literature
(see \cite{Jar}).\\[2.5mm]
To deduce Theorem B from Theorem A,
we shall cut off the nonlinearity.
Since suitable cut-offs only come to mind
in the real and ultrametric cases,
we have to restrict attention to these situations.\\[2.5mm]
We also discuss the dependence of the conjugacies $\id_E+v$ (and $\id_E+w$)
on~$f$.
In the global case,
we obtain
Lipschitz resp.\ H\"{o}lder continuous dependence
of $v$ (resp., $w$) as elements in the space $BC(E,E)$
of bounded continuous functions,
with respect to the supremum norm
(Theorem~\ref{globparthm}).
Similar results are obtained for the local conjugacies from
Theorem~B;
in this case, we also obtain continuous dependence
of $H$ and $H^{-1}$ when considered as elements of appropriate
H\"{o}lder spaces, if $\K$ is locally compact
and $\dim(E)<\infty$ (see Proposition~\ref{pardepno2}).
For earlier results concerning parameter dependence
in the real case, the reader is referred to
\cite[Theorem 26]{IRCOMP}.\\[2.5mm]
To put the
requirement of strict differentiability into context, we recall:
If $\K=\R$, equipped with its usual absolute value,
then $f$ as in (\ref{stupideq}) is strictly differentiable
at each $x\in U$ if and only if $f$ is continuously
Fr\'{e}chet differentiable (\cite[2.3.3]{Bo1}, \cite[Theorem 3.8.1]{Car}).
If $(\K,|.|)$ is arbitrary and $f$ is $C^2$
in the sense of \cite{Ber}, then $f$ is strictly differentiable at
each $x$ \cite[Proposition 3.4]{IMP}.
If $(\K,|.|)$
is a complete ultrametric field
and $E$ of finite dimension,
then $f$ is strictly differentiable at
each $x$ if and only if $f$ is $C^1$ in the sense
of~\cite{Ber}
(see \cite[Appendix C]{FIO}),
hence if and only if it is $C^1$ in the usual
sense of finite-dimensional non-archimedean analysis
(as in \cite{Sch}, \cite{dSm});
see~\cite{CMP}.\\[2.5mm]
In the classical real case,
it is known that conjugacies cannot be chosen
locally Lipschitz in general (see~\cite{Bel}, cited from \cite{Str}).
In particular, they need not be $C^1$ (although the $C^1$-property
-- and higher differentiablity properties --\linebreak
can be guaranteed under
suitable non-resonance conditions~\cite{Ste}).
The investigation of the possible continuity and differentiability
properties of local conjugacies (e.g., differentiability at the fixed point)
remains an active
area of research (see \cite{BaV}, \cite{GHR}, \cite{RS1}, \cite{RS2}, \cite{Str}
for some recent work).
The current article provides a foundation
for a later study of such refined questions also in the
non-archimedean case.\\[2.5mm]
The above concept of hyperbolicity is useful
also for other ends.
For example, as in the real case,
a stable manifold
can be constructed around each hyperbolic fixed point
(if $f$ is
analytic and the adapted norm is ultrametric)~\cite{STA}.
\section{Preliminaries and notation}
We fix some notation
and compile facts and preparatory results for later
use.
\begin{numba}
Given a metric space $(X,d)$,
$r>0$ and $x\in X$,
we define $B^X_r(x):=\{y\in X\colon d(x,y)<r\}$
and $\wb{B}^X_r(x):=\{y\in X\colon d(x,y)\leq r\}$.
As usual, a normed space $(E,\|.\|)$ over a valued field $(\K,|.|)$
is called a Banach space if it is complete.
If, moreover,  $(\K,|.|)$ is an ultrametric field and also
$\|.\|$ satisfies the ultrametric inequality $\|x+y\|\leq \max\{\|x\|,\|y\|\}$,
then $(E,\|.\|)$ is called an \emph{ultrametric} Banach space (see
\cite{Roo} for further information).
The ultrametric inequality implies that
\begin{equation}\label{stronger}
\|x+y\|=\|y\|\quad\mbox{for all $\,x,y\in E$ such that $\|x\|<\|y\|$.}
\end{equation}
If $(E,\|.\|_E)$ and $(F,\|.\|_F)$ are normed spaces over a valued field $(\K,|.|)$
and $A\colon E\to F$ a continuous linear map,
then its operator norm is defined as $\|A\|:=\sup\{\|Ax\|_F/\|x\|_E\colon 0\not=x\in E\}\in [0,\infty[$.
\end{numba}
\begin{numba}
If $f\colon X\to E$ is a bounded map to
a normed space $(E, \|.\|)$ over a valued field $(\K,|.|)$,
we write $\|f\|_\infty:=\sup\{\|f(x)\|\colon x\in X\}$
for its supremum norm.
Given a topological space $X$, we write $BC(X,E)$
for the set of bounded, continuous functions from $X$
to $E$. This is a normed space with respect to the supremum norm,
and a Banach space (respectively, an ultrametric Banach space)
if so is $E$.
\end{numba}
\begin{numba}\label{defbaselip}
As usual, we call a map $f\colon X\to Y$ between
metric spaces $(X,d_X)$ and $(Y,d_Y)$
(globally) \emph{H\"{o}lder} of exponent $\alpha\in \;]0,\infty[$
if there exists $L\in [0,\infty[$
such that $d_Y(f(x),f(y))\leq L\,d_X(x,y)^\alpha$ for all
$x,y\in X$. We let $\Lip_\alpha(f)$ be the minimum choice
of~$L$. If $f$ is bijective and both $f$ and $f^{-1}$
are H\"{o}lder (of exponent $\alpha$),
we call $f$ a bi-H\"{o}lder homeo\-morphism (of exponent $\alpha$).
H\"{o}lder maps of exponent $1$ are called Lipschitz
and we abbreviate $\Lip(f):=\Lip_1(f)$.
Thus $\Lip(A)=\|A\|$ for continuous linear maps.
We write $L_\alpha(X,Y)$ for the set of all
H\"{o}lder maps $f\colon X\to Y$ of exponent~$\alpha$.
If $(E,\|.\|_E)$
is a Banach space over a valued field $(\K,|.|)$,
then also
\[
BL_\alpha(X,E):=L_\alpha(X,E)\cap BC(X,E)
\]
is a Banach space, with respect to the norm
$\|f\|_\alpha:=\max\{\|f\|_\infty,\Lip_\alpha(f)\}$.
If, moreover, $X$ is compact, then $BL_\alpha(X,E)=L_\alpha(X,E)$
and thus $\|.\|_\alpha$ makes $L_\alpha(X,E)$ a Banach space.
\end{numba}
\begin{la}\label{basecom}
Let $(X,d_X)$, $(Y,d_Y)$ and $(Z,d_Z)$ be metric spaces,
$f\colon X\to Y$ be H\"{o}lder of exponent $\alpha$,
and $g\colon Y\to Z$ be H\"{o}lder of exponent $\beta$.
Then $g\circ f\colon X\to Z$ is H\"{o}lder of exponent $\alpha\beta$,
and
\[
\Lip_{\alpha\beta}(g\circ f)\; \leq\; \Lip_\beta(g)\, (\Lip_\alpha(f))^\beta\,.
\]
\end{la}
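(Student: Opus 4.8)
The plan is to verify the H\"{o}lder estimate directly from the definitions, with no cleverness required. First I would fix arbitrary $x,y\in X$ and abbreviate $L:=\Lip_\alpha(f)$ and $M:=\Lip_\beta(g)$, so that by definition $d_Y(f(x),f(y))\le L\, d_X(x,y)^\alpha$ and $d_Z(g(u),g(v))\le M\, d_Y(u,v)^\beta$ for all $u,v\in Y$. Applying the second inequality to $u=f(x)$, $v=f(y)$ gives
\[
d_Z(g(f(x)),g(f(y)))\;\le\; M\, d_Y(f(x),f(y))^\beta\;\le\; M\,\bigl(L\, d_X(x,y)^\alpha\bigr)^\beta\;=\; M\,L^\beta\, d_X(x,y)^{\alpha\beta},
\]
using that $t\mapsto t^\beta$ is monotone on $[0,\infty[$ (so that the bound $d_Y(f(x),f(y))\le L\,d_X(x,y)^\alpha$ may be raised to the $\beta$-th power) and the identity $(st)^\beta=s^\beta t^\beta$ for $s,t\ge 0$. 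Since $x,y$ were arbitrary, this shows $g\circ f$ is H\"{o}lder of exponent $\alpha\beta$ with constant $M L^\beta=\Lip_\beta(g)(\Lip_\alpha(f))^\beta$.

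It remains to pass from this to the stated inequality for $\Lip_{\alpha\beta}(g\circ f)$, which is the \emph{minimal} admissible constant. Since $M L^\beta$ is an admissible H\"{o}lder constant of exponent $\alpha\beta$ for $g\circ f$, and $\Lip_{\alpha\beta}(g\circ f)$ is the infimum (in fact minimum, as the set of admissible constants is a closed half-line) of all such constants, we immediately get $\Lip_{\alpha\beta}(g\circ f)\le M L^\beta=\Lip_\beta(g)(\Lip_\alpha(f))^\beta$, as desired.

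There is essentially no obstacle here; the only point deserving a word is the degenerate case where $L=0$ and $\beta<1$, so that $L^\beta$ is also $0$ (with the convention $0^\beta=0$), in which case $f$ is constant, hence so is $g\circ f$, and both sides of the asserted inequality vanish. One should also note the estimate is valid without any completeness or boundedness hypotheses on the spaces, and the monotonicity of $t\mapsto t^\beta$ on $[0,\infty[$ is the only analytic fact used.
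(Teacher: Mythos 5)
Your proposal is correct and coincides with the paper's own one-line proof: apply the H\"{o}lder estimate for $g$ to the pair $f(x),f(y)$, then insert the H\"{o}lder estimate for $f$ using the monotonicity of $t\mapsto t^\beta$. The extra remarks on the degenerate case $L=0$ and on minimality of the constant are fine but not needed.
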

\begin{proof}
For $x,y\in X$, we have $d_Z(g(f(x)),g(f(y)))\leq \Lip_\beta(g)d_Y(f(x),f(y))^\beta$
$\leq \Lip_\beta(g)\Lip_\alpha(f)^\beta d_X(x,y)^{\alpha \beta}$.
\end{proof}
\begin{la}\label{desc}
Let $(X,d_X)$,
$(Y,d_Y)$ be metric spaces,
$\alpha\geq \beta> 0$
and \mbox{$f\colon X\to Y$} be a
H\"{o}lder map
of exponent $\alpha$, which is bounded
in the sense that
\[
\spread(f)\, :=\, \sup\{d_Y(f(x),f(y))\colon x,y\in X\}<\infty \, .
\]
Then $f$ is also
H\"{o}lder
of exponent $\beta$,
and
\begin{equation}\label{lowerhold}
\Lip_\beta(f)\leq \max\{\Lip_\alpha(f),\spread(f)\}\,.
\end{equation}
\end{la}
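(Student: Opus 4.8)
The plan is to split the bound into two cases according to whether the two points are "close" or "far" in $X$. Fix $x,y\in X$; if $x=y$ there is nothing to prove, so assume $x\neq y$ and set $t:=d_X(x,y)>0$.

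First I would handle the case $t\leq 1$. Since $\alpha\geq\beta$, we have $t^\alpha\leq t^\beta$, hence
\[
d_Y(f(x),f(y))\;\leq\;\Lip_\alpha(f)\,t^\alpha\;\leq\;\Lip_\alpha(f)\,t^\beta\,,
\]
so the desired inequality holds with constant $\Lip_\alpha(f)$ in this range. Second, in the case $t\geq 1$ I would use boundedness: here $t^\beta\geq 1$, so
\[
d_Y(f(x),f(y))\;\leq\;\spread(f)\;\leq\;\spread(f)\,t^\beta\,.
\]
Combining the two cases, $d_Y(f(x),f(y))\leq\max\{\Lip_\alpha(f),\spread(f)\}\,d_X(x,y)^\beta$ for all $x,y\in X$, which shows $f$ is H\"older of exponent $\beta$ and gives the stated estimate (\ref{lowerhold}) for $\Lip_\beta(f)$, since $\Lip_\beta(f)$ is by definition the minimal admissible constant.

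There is no real obstacle here; the only point requiring a moment's care is the split at $d_X(x,y)=1$ and the observation that $\spread(f)<\infty$ is exactly what is needed to control the "far" regime, where the exponent drop from $\alpha$ to $\beta$ would otherwise make $t^\beta$ too small to dominate $t^\alpha$. The monotonicity $t\mapsto t^{\alpha}/t^{\beta}=t^{\alpha-\beta}$ across the threshold $t=1$ is what makes the two estimates patch together with a single maximum constant.
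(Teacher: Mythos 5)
Your proof is correct and follows exactly the paper's argument: split at $d_X(x,y)=1$, use the $\alpha$-H\"older bound together with $t^\alpha\leq t^\beta$ for $t\leq 1$, and use $\spread(f)\leq\spread(f)\,t^\beta$ for $t\geq 1$. Nothing to add.
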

\begin{proof}
Let $x,y\in X$.
If $d_X(x,y)\leq 1$,
then
\begin{equation}\label{to1}
d_Y(f(x),f(y))\leq \Lip_\alpha(f)\,  d_X(x,y)^\alpha\leq
\Lip_\alpha(f)\,  d_X(x,y)^\beta\,.
\end{equation}
If $d_X(x,y)\geq 1$,
then
\begin{equation}\label{to2}
d_Y(f(x),f(y))\leq \spread(f) \leq
\spread(f) \, d_X(x,y)^\beta\,.
\end{equation}
Now (\ref{lowerhold}) follows from (\ref{to1})
and (\ref{to2}).
\end{proof}
\begin{la}\label{compohoeld}
Let $(E,\|.\|_E)$ and $(F,\|.\|_F)$
be normed spaces over a valued field $(\K,|.|)$,
$h\colon E\to F$ be a bounded Lipschitz map
and $v\colon E\to E$ be a map which is H\"{o}lder
of some exponent $\alpha\in \;]0,1]$.
Then also the map $h\circ (\id_E+v)\colon E\to F$
is H\"{o}lder of exponent~$\alpha$,
and
\[
\Lip_\alpha(h\circ (\id_E+ v))\;\leq\; \max\{\Lip(h)(1+\Lip_\alpha(v)),\spread(h)\}\,.
\]
In particular, $\Lip_\alpha(h\circ (\id_E+ v))
\leq\max\{\Lip(h)(1+\Lip_\alpha(v)), 2\|h\|_\infty \}$.
\end{la}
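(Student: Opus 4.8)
The plan is to estimate $\|h(x+v(x)) - h(y+v(y))\|_F$ directly for arbitrary $x,y\in E$, distinguishing the cases $\|x-y\|_E\le 1$ and $\|x-y\|_E\ge 1$, much as in the proof of Lemma~\ref{desc}. One cannot simply invoke Lemma~\ref{basecom}, because the identity map $\id_E$ is Lipschitz but, on an unbounded space, fails to be H\"older of exponent $\alpha<1$ (the inequality $\|x-y\|_E\le\|x-y\|_E^\alpha$ only holds when $\|x-y\|_E\le 1$); hence $\id_E+v$ need not be H\"older of exponent $\alpha$, and a case distinction is unavoidable.

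First I would record the basic bound: since $h$ is Lipschitz and $v$ is H\"older of exponent $\alpha$, for all $x,y\in E$
\[
\|h(x+v(x)) - h(y+v(y))\|_F \;\le\; \Lip(h)\,\|(x-y)+(v(x)-v(y))\|_E \;\le\; \Lip(h)\bigl(\|x-y\|_E + \Lip_\alpha(v)\,\|x-y\|_E^\alpha\bigr).
\]
If $\|x-y\|_E\le 1$, then $\|x-y\|_E\le\|x-y\|_E^\alpha$, so the right-hand side is at most $\Lip(h)\,(1+\Lip_\alpha(v))\,\|x-y\|_E^\alpha$. If instead $\|x-y\|_E\ge 1$, then $\|x-y\|_E^\alpha\ge 1$, so the left-hand side is bounded by $\spread(h)\le\spread(h)\,\|x-y\|_E^\alpha$ directly (the first estimate is not even needed here).

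Combining the two cases yields that $h\circ(\id_E+v)$ is H\"older of exponent $\alpha$ with $\Lip_\alpha(h\circ(\id_E+v))\le\max\{\Lip(h)(1+\Lip_\alpha(v)),\spread(h)\}$. The ``in particular'' assertion is then immediate from $\spread(h)=\sup\{\|h(x)-h(y)\|_F\colon x,y\in E\}\le 2\|h\|_\infty$. There is essentially no hard step; the only point to watch is the failure of global H\"older continuity of the identity on an unbounded space, which is precisely why the argument must be organized around the threshold $\|x-y\|_E=1$ rather than routed through the composition lemma.
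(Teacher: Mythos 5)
Your proof is correct and follows exactly the paper's own argument: the same case split at $\|x-y\|_E=1$, the same Lipschitz-plus-H\"older estimate in the first case, and the same $\spread(h)$ bound in the second. Your remark about why one cannot simply route through Lemma~\ref{basecom} is a sound observation but adds nothing beyond the paper's (implicit) reasoning.
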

\begin{proof}
Let $x,y\in E$. If $\|y-x\|_E \leq 1$,
then $\|y-x\|_E\leq \|y-x\|^\alpha_E$ and hence
\begin{eqnarray*}
\|h(y+v(y))-h(x+v(x))\|_F & \leq &
\Lip(h)\|y+v(y)-x-v(x)\|_E\\
&\leq &
\Lip(h)(\|y-x\|_E+\Lip_\alpha(v)\|y-x\|^\alpha_E)\\
&\leq &
\Lip(h)(1+\Lip_\alpha(v)) \|y-x\|^\alpha_E\, .
\end{eqnarray*}
If $\|y-x\|_E\geq 1$, we have
$\|h(y+v(y))-h(x+v(x))\|_F\leq\spread(h)\leq \spread(h)\|y-x\|_E^\alpha$.
The assertion follows from the preceding estimates.
\end{proof}
\begin{la}\label{pwplip}
Let $(E,\|.\|)$ be a normed space
over a valued field $(\K,|.|)$,
$(X,d)$ be a metric space
and $\xi\colon X\to\K$ and $f\colon X\to E$
be bounded, Lipschitz maps.
Then also the pointwise product $\xi f$
is bounded and Lipschitz, with
\[
\Lip(\xi f)\, \leq\, \Lip(\xi)\|f\|_\infty+\|\xi\|_\infty\Lip(f)\,.
\]
\end{la}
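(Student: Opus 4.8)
The plan is to verify boundedness by a direct pointwise estimate and then obtain the Lipschitz bound by the usual ``add and subtract'' trick, freezing one factor at a time.

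First I would note that for every $x\in X$ we have $\|\xi(x)f(x)\|=|\xi(x)|\,\|f(x)\|\le\|\xi\|_\infty\|f\|_\infty$, using the compatibility $\|\lambda v\|=|\lambda|\,\|v\|$ of the norm on $E$ with scalar multiplication by $\K$. Hence $\xi f$ is bounded, with $\|\xi f\|_\infty\le\|\xi\|_\infty\|f\|_\infty$.

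Next, for $x,y\in X$ I would write
\[
\xi(x)f(x)-\xi(y)f(y)\;=\;\xi(x)\bigl(f(x)-f(y)\bigr)+\bigl(\xi(x)-\xi(y)\bigr)f(y)
\]
and apply the triangle inequality together with $\|\lambda v\|=|\lambda|\,\|v\|$ to get
\[
\|\xi(x)f(x)-\xi(y)f(y)\|\;\le\;|\xi(x)|\,\|f(x)-f(y)\|+|\xi(x)-\xi(y)|\,\|f(y)\|\,.
\]
Bounding $|\xi(x)|\le\|\xi\|_\infty$ and $\|f(y)\|\le\|f\|_\infty$, and using $\|f(x)-f(y)\|\le\Lip(f)\,d(x,y)$ and $|\xi(x)-\xi(y)|\le\Lip(\xi)\,d(x,y)$, yields the asserted inequality $\Lip(\xi f)\le\Lip(\xi)\|f\|_\infty+\|\xi\|_\infty\Lip(f)$ (dividing by $d(x,y)$ for $x\ne y$; the case $x=y$ is trivial).

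There is no genuine obstacle here; the only thing to get right is the \emph{choice} of which factor to freeze in each summand — keeping $\xi(x)$ in the first term and $f(y)$ in the second pairs the sup-norm of each factor with the Lipschitz constant of the other, which is exactly what the claimed estimate asserts.
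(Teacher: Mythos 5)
Your proof is correct and uses the same add-and-subtract decomposition as the paper, which estimates $\|\xi(y)f(y)-\xi(x)f(x)\|\le|\xi(y)-\xi(x)|\,\|f(y)\|+|\xi(x)|\,\|f(y)-f(x)\|$. The boundedness check you include is routine and implicit in the paper's one-line argument.
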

\begin{proof}
$\|\xi(y)f(y)\!-\!\xi(x)f(x)\| \leq
|\xi(y)\!-\!\xi(x)|\,\|f(y)\|\!+\!|\xi(x)|\,\|f(y)\!-\!f(x)\|$.
\end{proof}
\begin{la}\label{globinv}
Let $(E,\|.\|)$ be a Banach space over a valued field $(\K,|.|)$
$($such that $E\not=\{0\})$
and $A\colon E\to E$ be an automorphism of topological vector spaces.
Moreover, let $v\colon E\to E$ be a Lipschitz map
such that $\Lip(v)<\frac{1}{\|A^{-1}\|}$. Then the map
$f:=A+v\colon E\to E$ is a homeomorphism,
and $f^{-1} \colon E\to E$
is Lipschitz with
\begin{eqnarray}
\Lip(f^{-1}) & \leq & \frac{1}{\|A^{-1}\|^{-1}-\Lip(v)}
\quad \mbox{and}\label{lipforinv}\\
\Lip(f^{-1}-A^{-1}) &\leq & \frac{\|A^{-1}\|}{\|A^{-1}\|^{-1}-\Lip(v)}\Lip(v)\,.\label{lipdistinv}
\end{eqnarray}
If $v$ is bounded, then also $w:=f^{-1}-A^{-1}$ is bounded, and $\|w\|_\infty\leq \|A^{-1}\|\, \|v\|_\infty$.
\end{la}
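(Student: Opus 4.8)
The plan is to write $f = A + v$ and solve the equation $f(z) = y$ for given $y \in E$ by rewriting it as a fixed-point problem for the map
\[
\Phi_y\colon E \to E,\qquad \Phi_y(z) \;:=\; A^{-1}(y) - A^{-1}(v(z))\,,
\]
since $f(z) = y$ is equivalent to $z = A^{-1}(y - v(z)) = \Phi_y(z)$. First I would check that $\Phi_y$ is a contraction: for $z_1, z_2 \in E$ we have $\|\Phi_y(z_1) - \Phi_y(z_2)\| \le \|A^{-1}\| \cdot \Lip(v)\,\|z_1 - z_2\|$, and by hypothesis $q := \|A^{-1}\|\,\Lip(v) < 1$. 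Since $E$ is a Banach space, Banach's fixed-point theorem yields a unique $z =: f^{-1}(y)$ with $f(f^{-1}(y)) = y$; uniqueness of the fixed point also shows $f$ is injective, so $f^{-1}$ is a well-defined bijection $E \to E$.

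Next I would establish the Lipschitz estimate \eqref{lipforinv}. Given $y_1, y_2 \in E$, set $z_i := f^{-1}(y_i)$, so $z_i = A^{-1}(y_i) - A^{-1}(v(z_i))$. Subtracting and using the triangle inequality,
\[
\|z_1 - z_2\| \;\le\; \|A^{-1}\|\,\|y_1 - y_2\| \,+\, \|A^{-1}\|\,\Lip(v)\,\|z_1 - z_2\|\,,
\]
and solving for $\|z_1 - z_2\|$ gives $\|f^{-1}(y_1) - f^{-1}(y_2)\| \le \frac{\|A^{-1}\|}{1 - \|A^{-1}\|\Lip(v)}\|y_1 - y_2\| = \frac{1}{\|A^{-1}\|^{-1} - \Lip(v)}\|y_1-y_2\|$. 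In particular $f^{-1}$ is continuous, and since $f$ is clearly continuous, $f$ is a homeomorphism. For \eqref{lipdistinv}, I would write $f^{-1}(y) - A^{-1}(y) = -A^{-1}(v(f^{-1}(y)))$, so for $y_1, y_2$ the difference $(f^{-1} - A^{-1})(y_1) - (f^{-1} - A^{-1})(y_2)$ has norm at most $\|A^{-1}\|\,\Lip(v)\,\|f^{-1}(y_1) - f^{-1}(y_2)\|$, and inserting \eqref{lipforinv} yields exactly $\frac{\|A^{-1}\|}{\|A^{-1}\|^{-1} - \Lip(v)}\Lip(v)\,\|y_1 - y_2\|$.

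Finally, for the boundedness claim with $w := f^{-1} - A^{-1}$: from $w(y) = -A^{-1}(v(f^{-1}(y)))$ we get $\|w(y)\| \le \|A^{-1}\|\,\|v(f^{-1}(y))\| \le \|A^{-1}\|\,\|v\|_\infty$ for every $y$, hence $\|w\|_\infty \le \|A^{-1}\|\,\|v\|_\infty$. I do not anticipate a genuine obstacle here; the only point requiring a little care is the bookkeeping of the constants (rewriting $\|A^{-1}\|/(1 - \|A^{-1}\|\Lip(v))$ as $1/(\|A^{-1}\|^{-1} - \Lip(v))$, which is legitimate because $\|A^{-1}\| > 0$ as $E \ne \{0\}$), and observing that all the maps involved are genuinely defined on all of $E$ so that completeness of $E$ can be invoked globally.
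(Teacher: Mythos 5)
Your argument is correct; every step checks out, including the contraction constant $q=\|A^{-1}\|\Lip(v)<1$, the algebraic identity $\frac{\|A^{-1}\|}{1-\|A^{-1}\|\Lip(v)}=\frac{1}{\|A^{-1}\|^{-1}-\Lip(v)}$, and the formula $f^{-1}-A^{-1}=-A^{-1}\circ v\circ f^{-1}$ from which both (\ref{lipdistinv}) and the bound on $\|w\|_\infty$ follow. The route differs from the paper's in one respect: the paper does not run the fixed-point argument itself but instead invokes the Lipschitz Inverse Function Theorem of \cite{FIO} as a black box. Since that theorem is stated there for maps on balls, the paper must first restrict $f$ to $B^E_r(0)$ to get injectivity and the bound $\Lip(f_r^{-1})\le a^{-1}$ with $a=\|A^{-1}\|^{-1}-\Lip(v)$, and then obtain surjectivity from the covering property $f(B^E_r(0))\supseteq B^E_{ar}(f(0))$ by letting $r\to\infty$. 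You instead prove bijectivity globally in one stroke by applying Banach's fixed-point theorem to $\Phi_y(z)=A^{-1}(y)-A^{-1}(v(z))$ on all of $E$, and extract (\ref{lipforinv}) directly from the fixed-point equation. What the paper's version buys is reusability of the cited local theorem elsewhere; what yours buys is a self-contained proof with no localization step. From the identity $w=-A^{-1}\circ v\circ f^{-1}$ onward, the two arguments coincide (the paper derives that identity by expanding $\id_E=(A+v)\circ(A^{-1}+w)$, you read it off from the fixed-point equation, which is equivalent).
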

\begin{proof}
Set $a:=\|A^{-1}\|^{-1}-\Lip(v)>0$.
By the Lipschitz Inverse Function\linebreak
Theorem
(see \cite[Theorem 5.3]{FIO}), the restriction
$f_r:=f|_{B_r^E(0)}$ is injective for
each $r>0$, whence $f$ is injective.
By the same theorem,
the inverse map $(f_r)^{-1}\colon f(B_r^E(0))\to E$ is Lipschitz
with $\Lip(f_r^{-1})\leq a^{-1}$.
Hence also $f^{-1}\colon f(E)\to E$
is Lipschitz, with $\Lip(f^{-1})\leq a^{-1}$,
and thus (\ref{lipforinv}) holds.
In particular, $f$ is a homeomorphism onto its image.
By the cited theorem, $f(B_r^E(0))\supseteq B_{ar}^E(f(0))$
for each~$r$. Hence $f(E)\supseteq \bigcup_{r>0}B_{ar}^E(f(0))=E$,
whence $f$ is surjective.
To complete the proof, write $w:=f^{-1}-A^{-1}$.
Then
$\id_E=(A+v)\circ (A+v)^{-1}=(A+v)\circ (A^{-1}+w)=
\id_E+A\circ w+v\circ (A^{-1}+w)
=
\id_E+A\circ w+v\circ f^{-1}$
and thus
\begin{equation}\label{enabiter}
w=-A^{-1}\circ v\circ f^{-1}.
\end{equation}
Hence $\Lip(w)\leq \Lip(A^{-1})\Lip(v)\Lip(f^{-1})=
\|A^{-1}\|\Lip(v)\Lip(f^{-1})$.
If we combine this estimate with (\ref{lipforinv}),
we obtain (\ref{lipdistinv}).
Finally, assuming that $v$ is bounded,
(\ref{enabiter}) shows
that also $w$ is bounded, with $\|w\|_\infty\leq \|A^{-1}\|\, \|v\|_\infty$.
\end{proof}
\section{Passage from one perturbation to another}\label{bforth}
In this section, we construct conjugacies from one perturbation of a given hyperbolic automorphism to another.
\begin{la}\label{unqbdd}
Let $E\not=\{0\}$ be a Banach space over a valued field $(\K,|.|)$,
$A\colon E\to E$ be a hyperbolic
automorphism, and $\|.\|$ be an adapted norm on $E$.
Let $g=(g_\obs,g_\obu)$, $h=(h_\obs,h_\obu)\colon E\to E=E_\obs\oplus E_\obu$
be bounded Lipschitz maps
such that
\begin{equation}\label{hyp1}
\Lip(h) <  \|A^{-1}\|^{-1}\qquad\mbox{and}\qquad
\end{equation}
\begin{equation}\label{hyp2}
\Lambda:=\max\big\{\|A^{-1}_2\| (1 +\Lip(g_\obu)), \|A_1\| +\Lip(g_\obs)\big\}\,<\, 1\, ,
\end{equation}
with $A_1:=A|_{E_\obs}\colon E_\obs\to E_\obs$ and $A_2:= A|_{E_\obu}$.
Then there exists a unique
bounded continuous map $v\colon E\to E$
such that
\begin{equation}\label{detbd}
(\id_E+v)\circ (A+h)\;=\; (A+g)\circ (\id_E+v)\,.
\end{equation}
It satisfies
\begin{equation}\label{unifestim}
\|v\|_\infty \,\leq\,
\frac{\max\{\|h_\obs\|_\infty + \|g_\obs\|_\infty,  \|A_2^{-1}\|(\|h_\obu\|_\infty+ \|g_\obu\|_\infty )\}}{1-\Lambda}\,.
\end{equation}
If $g(0)=h(0)=0$, then also $v(0)=0$.
\end{la}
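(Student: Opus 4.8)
The plan is to recast equation~(\ref{detbd}) as a fixed-point problem for $v$ in the Banach space $BC(E,E)=BC(E,E_{\obs})\oplus BC(E,E_{\obu})$, equipped with the sup-norm coming from the adapted norm (so that the $\obs$/$\obu$ split is an $\max$-isometry, by~(\ref{hypb1})). Writing $v=(v_{\obs},v_{\obu})$ and expanding~(\ref{detbd}) componentwise, the $E_{\obs}$-component reads
\[
v_{\obs}\circ (A+h)\;=\; A_1\, v_{\obs}\;+\;g_{\obs}\circ(\id_E+v),
\]
and the $E_{\obu}$-component, after applying $A_2^{-1}$ and rearranging, reads
\[
v_{\obu}\;=\; A_2^{-1}\,\bigl(v_{\obu}\circ (A+h)\bigr)\;-\;A_2^{-1}\bigl(g_{\obu}\circ(\id_E+v)\bigr).
\]
Here I use that $A+h$ is a homeomorphism of $E$: this follows from Lemma~\ref{globinv} together with the first hypothesis~(\ref{hyp1}), $\Lip(h)<\|A^{-1}\|^{-1}$. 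So precomposition with $A+h$ (and with its inverse) is a well-defined isometry of $BC(E,E)$. Define $\Phi\colon BC(E,E)\to BC(E,E)$ by
\[
\Phi(v)_{\obs}\;:=\;A_1\,\bigl(v_{\obs}\circ(A+h)^{-1}\bigr)\;+\;\bigl(g_{\obs}\circ(\id_E+v)\bigr)\circ(A+h)^{-1},
\qquad
\Phi(v)_{\obu}\;:=\;A_2^{-1}\bigl(v_{\obu}\circ(A+h)\bigr)\;-\;A_2^{-1}\bigl(g_{\obu}\circ(\id_E+v)\bigr).
\]
A map $v\in BC(E,E)$ satisfies~(\ref{detbd}) if and only if it is a fixed point of $\Phi$; one has to check that $\Phi$ maps $BC(E,E)$ into itself, which is routine since $A_1$, $A_2^{-1}$ are bounded operators, $g_{\obs},g_{\obu}$ are bounded continuous, and composition with homeomorphisms preserves boundedness and continuity.

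The main point is that $\Phi$ is a contraction with constant $\Lambda<1$. Take $v,\tilde v\in BC(E,E)$. For the stable component, precomposition with $(A+h)^{-1}$ is a sup-norm isometry, so
\[
\|\Phi(v)_{\obs}-\Phi(\tilde v)_{\obs}\|_\infty
\;\leq\; \|A_1\|\,\|v_{\obs}-\tilde v_{\obs}\|_\infty
\;+\;\bigl\|g_{\obs}\circ(\id_E+v)-g_{\obs}\circ(\id_E+\tilde v)\bigr\|_\infty
\;\leq\;\bigl(\|A_1\|+\Lip(g_{\obs})\bigr)\,\|v-\tilde v\|_\infty,
\]
using $\|g_{\obs}(y+v(y))-g_{\obs}(y+\tilde v(y))\|\leq\Lip(g_{\obs})\|v(y)-\tilde v(y)\|$ pointwise. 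Similarly, for the unstable component, precomposition with $A+h$ is an isometry and $\|A_2^{-1}(\,\cdot\,)\|\leq\|A_2^{-1}\|\,\|\cdot\|$, giving
\[
\|\Phi(v)_{\obu}-\Phi(\tilde v)_{\obu}\|_\infty
\;\leq\; \|A_2^{-1}\|\,\|v_{\obu}-\tilde v_{\obu}\|_\infty
\;+\;\|A_2^{-1}\|\,\Lip(g_{\obu})\,\|v-\tilde v\|_\infty
\;=\;\|A_2^{-1}\|\bigl(1+\Lip(g_{\obu})\bigr)\,\|v-\tilde v\|_\infty.
\]
Since $\|\cdot\|_\infty$ on $BC(E,E)$ is the maximum of the two component sup-norms, combining these yields $\|\Phi(v)-\Phi(\tilde v)\|_\infty\leq\Lambda\,\|v-\tilde v\|_\infty$ with $\Lambda$ as in~(\ref{hyp2}). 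By the Banach fixed-point theorem there is a unique fixed point $v$, which is the unique bounded continuous solution of~(\ref{detbd}).

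For the norm estimate~(\ref{unifestim}), apply the standard a~priori bound $\|v\|_\infty\leq\|\Phi(0)\|_\infty/(1-\Lambda)$. Computing $\Phi(0)$: its stable part is $\bigl(g_{\obs}\circ\id_E\bigr)\circ(A+h)^{-1}=g_{\obs}\circ(A+h)^{-1}$, of sup-norm $\|g_{\obs}\|_\infty$; its unstable part is $-A_2^{-1}(g_{\obu})$, of sup-norm $\leq\|A_2^{-1}\|\,\|g_{\obu}\|_\infty$. Hmm — this produces $\max\{\|g_{\obs}\|_\infty,\|A_2^{-1}\|\|g_{\obu}\|_\infty\}/(1-\Lambda)$, which is \emph{smaller} than the asserted bound~(\ref{unifestim}); the extra $\|h_{\obs}\|_\infty$, $\|h_{\obu}\|_\infty$ terms in~(\ref{unifestim}) presumably arise because the cleanest symmetric derivation of~(\ref{detbd}) treats $v\mapsto\Phi(v)$ slightly differently (keeping $(A+h)$ rather than inverting it on the stable side, so that $\Phi(0)_{\obs}=A_1^{-1}$ is not even bounded). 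I would instead set up~(\ref{detbd}) so the stable equation is $v_{\obs}\circ(A+h)=A_1 v_{\obs}+g_{\obs}\circ(\id+v)$ solved as a fixed point by iterating forward: $v_{\obs}=\sum_{n\geq0}A_1^{n}\bigl(g_{\obs}\circ(\id+v)\bigr)\circ(A+h)^{-n}\cdot(\text{sign})$, etc.; in any case the last step is just tracking which bounded pieces of $g$ and $h$ enter $\Phi(0)$, and bounding a geometric series. \textbf{The expected main obstacle} is purely bookkeeping: getting the stable/unstable split of~(\ref{detbd}) into the precise form whose $\Phi(0)$ reproduces the numerator in~(\ref{unifestim}) (in particular seeing where $\|h_{\obs}\|_\infty,\|h_{\obu}\|_\infty$ come from — likely from expanding $\id_E+v$ composed with $A+h$ as $A+h+v\circ(A+h)$ and isolating the $h$-term), together with checking $\Phi$ self-maps $BC(E,E)$ and that the contraction constant is exactly $\Lambda$. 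Finally, if $g(0)=h(0)=0$ then $A+h$ fixes $0$, so $\Phi$ maps the closed subspace $\{v\in BC(E,E):v(0)=0\}$ into itself; this subspace is complete, the fixed point lies in it by uniqueness, hence $v(0)=0$.
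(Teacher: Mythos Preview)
Your approach is exactly the paper's: rewrite~(\ref{detbd}) as a fixed-point equation on $BC(E,E)=BC(E,E_\obs)\oplus BC(E,E_\obu)$, treat the stable component by precomposing with $(A+h)^{-1}$ and the unstable one by applying $A_2^{-1}$, then verify the resulting operator is a contraction with constant~$\Lambda$ and read off~(\ref{unifestim}) from the a~priori bound $\|v\|_\infty\le\|\Phi(0)\|_\infty/(1-\Lambda)$.

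The one concrete slip is in your expansion of~(\ref{detbd}). Writing out $(\id_E+v)\circ(A+h)=(A+g)\circ(\id_E+v)$ gives $A+h+v\circ(A+h)=A+Av+g\circ(\id_E+v)$, hence
\[
v\circ(A+h)\;=\;Av\;-\;h\;+\;g\circ(\id_E+v),
\]
so the stable equation is $v_\obs\circ(A+h)=A_1 v_\obs-h_\obs+g_\obs\circ(\id_E+v)$ and the unstable one, after applying $A_2^{-1}$, is $v_\obu=A_2^{-1}(v_\obu\circ(A+h))+A_2^{-1}h_\obu-A_2^{-1}(g_\obu\circ(\id_E+v))$. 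You dropped the $h_\obs$ and $h_\obu$ terms; with them omitted, a fixed point of your $\Phi$ does \emph{not} solve~(\ref{detbd}) (unless $h=0$). The good news is that these terms are constant in $v$, so your contraction estimate $\Lip(\Phi)\le\Lambda$ is unaffected. And once they are restored, $\Phi(0)_\obs=-h_\obs\circ(A+h)^{-1}+g_\obs\circ(A+h)^{-1}$ and $\Phi(0)_\obu=A_2^{-1}h_\obu-A_2^{-1}g_\obu$, whose sup-norms are bounded by $\|h_\obs\|_\infty+\|g_\obs\|_\infty$ and $\|A_2^{-1}\|(\|h_\obu\|_\infty+\|g_\obu\|_\infty)$ respectively---exactly the numerator in~(\ref{unifestim}). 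This is precisely what you suspected in your ``hmm'' paragraph; it is not alternative bookkeeping but the missing piece of the equation itself. Your argument for $v(0)=0$ is fine (the paper phrases it via the iteration $\theta^n(0)(0)=0$, which amounts to the same thing).
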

\begin{proof}
As a consequence of (\ref{hyp1}),
$A+h\colon E\to E$ is a homeomorphism,
whose inverse $(A+h)^{-1}$ is Lipschitz with
$\Lip((A+h)^{-1})\leq (\|A^{-1}\|^{-1}-\Lip(h))^{-1}$
(see Lemma~\ref{globinv}).
For a bounded continuous function $v\colon E\to E$,
(\ref{detbd}) is equivalent to
$A^{-1}\circ (\id_E+v)\circ (A+h) = A^{-1} \circ (A+g)\circ (\id_E+v)$,
which in turn is equivalent to
\begin{equation}\label{vers1}
v \; =\;  A^{-1}\circ h +A^{-1}\circ v\circ (A+h)-A^{-1}\circ g\circ (\id_E+v)\, .
\end{equation}
Let
$\pi_\obs\colon E\to E_\obs$
and $\pi_\obu\colon E\to E_\obu$
be the projections onto the stable and unstable subspace of $E$, respectively.
In the following, we identify a function $k\colon E\to E$ with the pair $(k_\obs,k_\obu)$
of its components $k_\obs:=\pi_\obs\circ k$
and $k_\obu:=\pi_\obu\circ k$.
Then $BC(E,E)=BC(E,E_{\obs})\oplus BC(E,E_{\obu})$
as a Banach space (if we take the maximum norm
on the right hand side).
If $v=(v_\obs,v_\obu)$, then (\ref{vers1}) holds if and only if
both (\ref{vers2}) and (\ref{vers3}) are satisfied:
\begin{eqnarray}
v_\obs &\!=\!& A^{-1}_1\circ h_\obs +A^{-1}_1\circ v_\obs \circ (A+h)-A^{-1}_1 \circ g_\obs \circ (\id_E+v)\label{vers2}\\
v_\obu &\!=\! & A^{-1}_2 \circ h_\obu +A^{-1}_2\circ v_\obu\circ (A+h)-A^{-1}_2 \circ g_\obu\circ (\id_E+v)=:\theta_2(v).\label{vers3}
\end{eqnarray}
Moreover, (\ref{vers2}) is satisfied if and only if
\begin{equation}\label{vers4}
v_\obs \,=\, A_1\circ v_\obs\circ (A+h)^{-1}
 -h_\obs\circ (A+h)^{-1}+g_\obs\circ (\id_E+v)\circ (A+h)^{-1}\,=:\, \theta_1(v).
\end{equation}
Thus (\ref{detbd}) holds if and only if $v\in BC(E,E)$ is a fixed point of the self-map
\[
\theta:=(\theta_1,\theta_2)\colon BC(E,E)\to
BC(E,E_{\obs})\oplus BC(E,E_{\obu})=BC(E,E)
\]
of the Banach space $BC(E,E)$.
\emph{We claim that $\theta$ is a contraction, with $\Lip(\theta)\leq\Lambda$}.
If this is true, then
$\theta$ will have a unique fixed
point (the unique $v$ we seek),
by Banach's Fixed Point Theorem \cite[Theorem~3.4.1]{KaP}.
Starting the iterative approximation of $v$ with the zero-function
$v_0:=0\colon E\to E$, the standard \emph{a priori} estimate
(see \cite[Proposition~3.4.4]{KaP}) gives
\[
\|v\|_\infty=\|v-v_0\|_\infty \leq \frac{\|\theta(v_0)-v_0\|_\infty}{1-\Lambda}
=\frac{\|\theta(v_0)\|_\infty}{1-\Lambda}\,
\]
and applying now the triangle inequality to the individual summands in $\|\theta(v_0)\|_\infty=\|\theta(0)\|_\infty
=\max\{\|\theta_1(0)\|_\infty\|\theta_2(0)\|_\infty\}$ (as in (\ref{vers4}) and (\ref{vers3})),
we obtain (\ref{unifestim}).
If $g(0)=h(0)=0$,
we have $\theta^n(v_0)(0)=0$ for each $n\in \N_0$, by a trivial induction.
Hence also $v=\lim_{n\to\infty}\theta^n(v_0)$ vanishes at~$0$.\\[2.5mm]
%
%
%
%
To establish the claim,
we need only show that both $\Lip(\theta_1),\Lip(\theta_2)\leq \Lambda$,
because $\Lip(\theta)=\max\{\Lip(\theta_1),\Lip(\theta_2)\}$.
Given $v,w\in BC(E,E)$,
we have
\begin{eqnarray*}
\|\theta_2(v)-\theta_2(w)\|_\infty & \leq &
\|A^{-1}_2\circ (v_\obu-w_\obu) \circ (A+h)\|_\infty\\
& & \!\!\!\!  + \,
\|A^{-1}_2 \circ g_\obu\circ (\id_E+v)-
A^{-1}_2 \circ g_\obu\circ (\id_E+w)\|_\infty \,.
\end{eqnarray*}
Since
$\|A^{-1}_2\circ (v_\obu -w_\obu ) \circ (A+h)\|_\infty \leq
\|A^{-1}_2\| \cdot \|v-w\|_\infty$
and
\[
\|A^{-1}_2 \circ g_\obu\circ (\id_E+v)-
A^{-1}_2 \circ g_\obu\circ (\id_E+w)\|_\infty
\leq
\|A^{-1}_2 \| \cdot \Lip(g_\obu) \circ \|v-w\|_\infty\,,
\]
we get
$\|\theta_2(v)-\theta_2(w)\|_\infty\leq
\|A^{-1}_2\| (1 +\Lip(g_\obu))  \|v-w\|_\infty$
and thus
\begin{equation}\label{esthet1}
\Lip(\theta_2)\leq \|A^{-1}_2\| (1 +\Lip(g_\obu))  \; \leq \; \Lambda
\end{equation}
(using (\ref{hyp2})).
Moreover,
\[
\|\theta_1(v)-\theta_1(w)\|_\infty
\; \leq\hspace*{110mm}\vspace{-1.5mm}
\]
\[
 \|A_1\circ (v_\obs -w_\obs )\|_\infty
 + 
 \| g_\obs\circ (\id_E+v)\circ (A+h)^{-1}
-g_\obs\circ (\id_E+w)\circ (A+h)^{-1}\|_\infty.\vspace{1.5mm}
\]
As
$\| g_\obs\circ (\id_E+v)\circ (A+h)^{-1}
-g_\obs\circ (\id_E+w)\circ (A+h)^{-1}\|_\infty\leq \Lip(g_\obs)\|v-w\|_\infty$
and $\|A_1\circ (v_\obs-w_\obs )\|_\infty\leq \|A_1\|\cdot \|v-w\|_\infty$,
we obtain
\begin{equation}\label{esthetXX}
\Lip(\theta_1)\leq \|A_1\| +\Lip(g_\obs)  \; \leq\; \Lambda
\end{equation}
(using (\ref{hyp2}) again).
Thus
\begin{equation}\label{reusend}
\Lip(\theta)\;\leq\; \Lambda\, ,
\end{equation}
which completes the proof.
\end{proof}
\begin{la}\label{bdhomeo}
In the situation of Lemma~{\rm\ref{unqbdd}},
assume that also
\begin{equation}\label{hyp3}
\Lip(g) <  \|A^{-1}\|^{-1}\, ,
\end{equation}
\begin{equation}\label{hyp4}
\|A^{-1}_2\| (1 +\Lip(h_\obu))  \; < \; 1\, ,\quad \mbox{and}\quad
\|A_1\| +\Lip(h_\obs)  \; < \; 1
\end{equation}
hold.
Then the map $\id_E+v\colon E\to E$
is a homeomorphism.
Moreover, $w:=(\id_E+v)^{-1}-\id_E\colon E\to E$
is the unique bounded continuous
map such that
\begin{equation}\label{detbd2}
(\id_E+w)\circ (A+g)\;=\; (A+h)\circ (\id_E+w)\,.
\end{equation}
\end{la}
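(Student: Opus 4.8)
The plan is to build the inverse of $\id_E+v$ by invoking Lemma~\ref{unqbdd} once more with the roles of $g$ and $h$ interchanged, and then to identify that inverse through a uniqueness argument. Note first that hypotheses~(\ref{hyp1}) and~(\ref{hyp2}) of Lemma~\ref{unqbdd}, read with $g$ and $h$ swapped, are exactly~(\ref{hyp3}) and~(\ref{hyp4}). Hence Lemma~\ref{unqbdd} provides a unique bounded continuous map $\wt v\colon E\to E$ with
\[
(\id_E+\wt v)\circ(A+g)\;=\;(A+h)\circ(\id_E+\wt v)\,,
\]
and this $\wt v$ is the obvious candidate for $w$, with $\id_E+\wt v$ the candidate for $(\id_E+v)^{-1}$.

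Next I would show $(\id_E+v)\circ(\id_E+\wt v)=\id_E$ and $(\id_E+\wt v)\circ(\id_E+v)=\id_E$. Put $\phi:=(\id_E+v)\circ(\id_E+\wt v)$. Chaining the defining identity~(\ref{detbd}) of $v$ with the identity for $\wt v$ displayed above yields $\phi\circ(A+g)=(A+g)\circ\phi$; moreover $\phi-\id_E=\wt v+v\circ(\id_E+\wt v)$ lies in $BC(E,E)$, since $v$ and $\wt v$ are bounded and continuous (boundedness of $v$ makes $v\circ(\id_E+\wt v)$ bounded irrespective of the inner map). Now apply Lemma~\ref{unqbdd} with both of its data taken to be $g$: its hypotheses then reduce to~(\ref{hyp3}) and~(\ref{hyp2}), so $\id_E$ is the \emph{only} map of the form $\id_E+(\text{bounded continuous})$ conjugating $A+g$ to itself; hence $\phi=\id_E$. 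The other composite is handled symmetrically: with $\psi:=(\id_E+\wt v)\circ(\id_E+v)$ one obtains $\psi\circ(A+h)=(A+h)\circ\psi$ and $\psi-\id_E\in BC(E,E)$, and Lemma~\ref{unqbdd} applied with both data equal to $h$ (using hypotheses~(\ref{hyp1}) and~(\ref{hyp4})) forces $\psi=\id_E$.

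Consequently $\id_E+v$ is a bijection of $E$ whose inverse $\id_E+\wt v$ is continuous, hence a homeomorphism, and $w=(\id_E+v)^{-1}-\id_E=\wt v$ satisfies precisely~(\ref{detbd2}); uniqueness of $w$ among bounded continuous solutions of~(\ref{detbd2}) is nothing but the uniqueness clause of Lemma~\ref{unqbdd} (with $g$ and $h$ interchanged). I expect the only real care needed to be bookkeeping: tracking which of~(\ref{hyp1})--(\ref{hyp4}) is used in each of the three applications of Lemma~\ref{unqbdd}, and checking that $\phi-\id_E$ and $\psi-\id_E$ genuinely belong to $BC(E,E)$ so that the uniqueness statement is applicable. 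No genuinely new estimate is required beyond what Lemma~\ref{unqbdd} already supplies.
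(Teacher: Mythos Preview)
Your proposal is correct and follows essentially the same route as the paper: obtain $\wt v$ (the paper's $w$) from Lemma~\ref{unqbdd} with $g$ and $h$ interchanged, then use the uniqueness clause of Lemma~\ref{unqbdd} applied to the pairs $(g,g)$ and $(h,h)$ to force both composites $(\id_E+v)\circ(\id_E+\wt v)$ and $(\id_E+\wt v)\circ(\id_E+v)$ to equal $\id_E$. Your bookkeeping of which hypotheses feed which application is accurate and matches the paper's implicit use of them.
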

\begin{proof}
In view of (\ref{hyp3}) and (\ref{hyp4}),
we can apply Lemma~\ref{unqbdd} with reversed roles
of $g$ and $h$, and obtain a unique bounded continuous
map $w \colon E\to E$ such that (\ref{detbd2})
holds.
Then
\[
(\id_E+v)\circ (\id_E+w)\;=\; \id_E+f,
\]
where $f:=w+v\circ (\id_E+w)$ is continuous and bounded.
Now
\begin{eqnarray*}
(\id_E+f)\circ (A+g)
&\!=\! &
(\id_E+v)\circ (\id_E+w)\circ (A+g)\\
& \! =\! & 
(\id_E+v)\circ (A+h)\circ (\id_E+w)\\
& \!= \! & (A+g)\circ (\id_E+v)\circ (\id_E+w)
= (A+g)\circ (\id_E+f),
\end{eqnarray*}
using (\ref{detbd2}) to obtain the second equality and
(\ref{detbd}) for the third.
Since also $(\id_E+0)\circ (A+g)=(A+g)\circ (\id_E+0)$,
the uniqueness property in Lemma~\ref{unqbdd}
(applied to $g$ and $g$ in place of $g$ and $h$)
shows that $f=0$
and therefore\linebreak
$(\id_E+v)\circ (\id_E+w)=\id_E$.
Reversing the roles of $g$ and $h$,
the same argument gives
$(\id_E+w)\circ (\id_E+v)=\id_E$.
Thus $\id_E+v$ is invertible
with $(\id_E+v)^{-1}=\id_E+w$.
The assertions follow.
\end{proof}
\section{H\"{o}lder property of the conjugacies}
We now show that the mappings $v$ constructed in Section \ref{bforth}
are H\"older.
\begin{la}\label{limithoel}
Let $(X,d_X)$ and $(Y,d_Y)$ be metric spaces,
$\alpha>0$
and $(f_j)_{j\in J}$
be a net in $L_\alpha(X,Y)$
which converges pointwise to a function $f\colon X\to Y$.
If
\[
\lambda\; :=\; \sup\{\Lip_\alpha(f_j) \colon j\in J\}\;<\;\infty\,,
\]
then $f\in L_\alpha(X,Y)$ and $\Lip_\alpha(f)\leq\lambda$.
\end{la}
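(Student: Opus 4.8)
The plan is to exploit the lower semicontinuity of the H\"older seminorm under pointwise convergence; I expect no serious obstacle. First I would fix two arbitrary points $x,y\in X$. For every index $j\in J$ the hypothesis yields
\[
d_Y(f_j(x),f_j(y))\;\leq\;\Lip_\alpha(f_j)\,d_X(x,y)^\alpha\;\leq\;\lambda\,d_X(x,y)^\alpha\,.
\]

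Next I would pass to the limit along the net $J$. Since $(f_j)_{j\in J}$ converges pointwise to $f$, we have $f_j(x)\to f(x)$ and $f_j(y)\to f(y)$ in $Y$; as the metric $d_Y\colon Y\times Y\to\R$ is continuous, it follows that $d_Y(f_j(x),f_j(y))\to d_Y(f(x),f(y))$ in $\R$. The displayed estimate shows that this net of real numbers lies in the closed half-line $]-\infty,\lambda\,d_X(x,y)^\alpha]$, which therefore also contains its limit. Hence
\[
d_Y(f(x),f(y))\;\leq\;\lambda\,d_X(x,y)^\alpha\,.
\]

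Since $x,y\in X$ were arbitrary, this shows that $f$ is H\"older of exponent $\alpha$, and moreover $\Lip_\alpha(f)\leq\lambda$, because $\Lip_\alpha(f)$ is by definition the least admissible H\"older constant. The only point that calls for a modicum of care is that we work with nets rather than sequences, but this changes nothing: limits in the Hausdorff space $\R$ are unique, and a closed subset of $\R$ contains the limit of every net lying in it. This completes the argument; in fact the same reasoning shows that on the set of H\"older maps the functional $g\mapsto\Lip_\alpha(g)$ is lower semicontinuous for the topology of pointwise convergence, which is the conceptual content of the lemma.
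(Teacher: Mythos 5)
Your proof is correct and follows exactly the paper's argument: fix $x,y\in X$, bound $d_Y(f_j(x),f_j(y))$ by $\lambda\, d_X(x,y)^\alpha$ uniformly in $j$, and pass to the limit using continuity of the metric. The extra remarks on nets versus sequences and on lower semicontinuity are fine but not needed.
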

\begin{proof}
Given $x,y\in X$, we have
$d_Y(f_j(x),f_j(y))\leq \lambda\, d_X(x,y)^\alpha$
for all $j\in J$.
Passing to the limit, we obtain $d_Y(f(x),f(y))\leq\lambda \, d_X(x,y)^\alpha$.
\end{proof}
\begin{la}\label{gethoelder}
In the situation of Lemma~{\rm\ref{unqbdd}},
let $k:=(A+h)^{-1}$ and assume that
\begin{equation}\label{exneed1}
\Lip_\alpha(h_\obs\circ k)
+
\Lip(k)^\alpha
\bigl(\ve \|A_1\|+
\max\{\Lip(g_\obs)(1+\ve),\spread(g_\obs)\}\bigr) \leq \ve
\end{equation}
and
\begin{eqnarray}
\lefteqn{\Lip_\alpha(A^{-1}_2\circ h_\obu)
\,+ \, \ve \|A_2^{-1}\|\Lip(A+h)^\alpha}\hspace*{18mm}\notag\\
& & +\;  \|A^{-1}_2\| \max\{\Lip(g_\obu)(1+\ve),\spread(g_\obu)\}
\; \leq  \; \ve \qquad\qquad \label{exneed2}
\end{eqnarray}
for a given number $\alpha\in \;]0,1[$.
Then the bounded continuous map~$v\colon E\to E$ determined
by {\rm(\ref{detbd})}
is H\"{o}lder of exponent~$\alpha$,
and
\[
\Lip_\alpha(v)\; \leq\; \ve\, .
\]
\end{la}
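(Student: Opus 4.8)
The plan is to revisit the fixed-point argument of Lemma~\ref{unqbdd} and show that the contraction $\theta=(\theta_1,\theta_2)$ not only maps $BC(E,E)$ into itself, but restricts to a self-map of the closed subset $BL_\alpha(E,E)$ of those $v$ with $\Lip_\alpha(v)\le\ve$, which is complete by \ref{defbaselip}. Since $\theta$ is already a contraction on $BC(E,E)$ with unique fixed point the $v$ from (\ref{detbd}), once we know $\theta$ preserves $\{\,v : \Lip_\alpha(v)\le\ve\,\}$ the iterates $\theta^n(0)$ stay in this set; they converge uniformly to $v$, hence pointwise, and Lemma~\ref{limithoel} yields $\Lip_\alpha(v)\le\ve$, which is the assertion. (Alternatively one can invoke completeness of $BL_\alpha(E,E)\cap\{\Lip_\alpha\le\ve\}$ directly and apply Banach's fixed point theorem there, then identify the fixed point with $v$ by uniqueness in $BC(E,E)$.)

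So the real task is the estimate: if $\Lip_\alpha(v)\le\ve$, then $\Lip_\alpha(\theta_1(v))\le\ve$ and $\Lip_\alpha(\theta_2(v))\le\ve$. I would treat the two components separately. For $\theta_2(v)=A^{-1}_2\circ h_\obu+A^{-1}_2\circ v_\obu\circ(A+h)-A^{-1}_2\circ g_\obu\circ(\id_E+v)$, I bound each summand's $\alpha$-Hölder constant: the first term contributes $\Lip_\alpha(A^{-1}_2\circ h_\obu)$; for the second, composing the Lipschitz map $A+h$ (Hölder exponent $1$) after $v_\obu$ and then the linear $A_2^{-1}$, Lemma~\ref{basecom} gives $\Lip_\alpha\le \|A_2^{-1}\|\,\Lip_\alpha(v_\obu)\,\Lip(A+h)^\alpha\le \ve\|A_2^{-1}\|\Lip(A+h)^\alpha$; for the third, Lemma~\ref{compohoeld} applied to $h:=g_\obu$ (with our $v$, $\Lip_\alpha(v)\le\ve$) bounds $\Lip_\alpha(g_\obu\circ(\id_E+v))$ by $\max\{\Lip(g_\obu)(1+\ve),\spread(g_\obu)\}$, and composing with the linear $A_2^{-1}$ multiplies by $\|A_2^{-1}\|$. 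Summing these three bounds is exactly the left-hand side of (\ref{exneed2}), which by hypothesis is $\le\ve$. The argument for $\theta_1(v)=A_1\circ v_\obs\circ k-h_\obs\circ k+g_\obs\circ(\id_E+v)\circ k$, with $k=(A+h)^{-1}$, is parallel: Lemma~\ref{basecom} handles $A_1\circ v_\obs\circ k$ giving $\|A_1\|\,\ve\,\Lip(k)^\alpha$; the term $h_\obs\circ k$ contributes $\Lip_\alpha(h_\obs\circ k)$ directly; and for $g_\obs\circ(\id_E+v)\circ k$ one first uses Lemma~\ref{compohoeld} to bound $\Lip_\alpha(g_\obs\circ(\id_E+v))$ by $\max\{\Lip(g_\obs)(1+\ve),\spread(g_\obs)\}$, then Lemma~\ref{basecom} to compose with $k$, picking up a further factor $\Lip(k)^\alpha$. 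Adding these is precisely the left side of (\ref{exneed1}), hence $\le\ve$.

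The main obstacle, such as it is, is bookkeeping: one must be careful that $\theta_1$ and $\theta_2$ are being composed on the \emph{right} with Lipschitz maps ($k$, respectively $A+h$), so that the Hölder exponent is preserved (Lemma~\ref{basecom} with $\beta=1$ on the outer factor, $\alpha$ on the inner), and that the nonlinearity $g_\obs\circ(\id_E+v)$ genuinely fits the hypotheses of Lemma~\ref{compohoeld} — it does, since $g_\obs$ is bounded Lipschitz and $v$ is assumed Hölder of exponent $\alpha\le 1$. A minor point is that $v_\obu=\pi_\obu\circ v$ satisfies $\Lip_\alpha(v_\obu)\le\Lip_\alpha(v)$ because $\pi_\obu$ is a norm-one projection in the adapted norm (by (\ref{hypb1})); likewise for $v_\obs$. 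Once these are in place, the inequalities (\ref{exneed1}) and (\ref{exneed2}) close the induction term by term, and the conclusion $\Lip_\alpha(v)\le\ve$ follows as described.
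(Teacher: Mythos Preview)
Your proposal is correct and follows essentially the same approach as the paper: show that the contraction $\theta=(\theta_1,\theta_2)$ from the proof of Lemma~\ref{unqbdd} leaves the set $Y=\{v\in BC(E,E)\cap L_\alpha(E,E):\Lip_\alpha(v)\le\ve\}$ invariant, with the term-by-term estimates on $\Lip_\alpha(\theta_1(v))$ and $\Lip_\alpha(\theta_2(v))$ via Lemmas~\ref{basecom} and~\ref{compohoeld} matching the paper's exactly. One small correction: completeness of $Y$ in the metric $d_\infty$ follows from its closedness in $BC(E,E)$ via Lemma~\ref{limithoel}, not from~\ref{defbaselip} (which concerns the stronger norm $\|\cdot\|_\alpha$); your primary iterate argument using Lemma~\ref{limithoel} is already the clean way to conclude and needs no repair.
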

\begin{proof}
We retain the notation
introduced in the proof of Lemma~\ref{unqbdd};
in particular, we shall use the contraction
$\theta=(\theta_1,\theta_2)\colon BC(E,E)\to BC(E,E)$
introduced there.
By Lemma~\ref{limithoel},
the (non-empty) set
\begin{equation}\label{defnY}
Y\; :=\; \{f\in BC(E,E)\cap L_\alpha(E,E)\colon \Lip_\alpha(f)\leq \ve\}
\end{equation}
is closed in $BC(E,E)$, and hence a complete metric
space with the metric induced by that on $BC(E,E)$,
$d_\infty (u,w):=\|u-w\|_\infty$.
We claim that $\theta(Y)\sub Y$.
If this is true, then the Banach Fixed Point Theorem
provides a unique fixed point
$y\in Y$ for the contraction $\theta|_Y\colon Y\to Y$
of $(Y,d_\infty)$.Then $y$ has to coincide with
the unique fixed point $v\in BC(E,E)$ of $\theta$
(the map $v$ determined by (\ref{detbd})), and thus $v=y\in Y$,
whence all assertions of the lemma hold.
Since
\begin{eqnarray}
L_\alpha(E,E)& =& L_\alpha(E,E_\obs)\oplus L_\alpha(E,E_\obu)\quad\mbox{and}\label{ismaxn}\\
\Lip_\alpha(f) & = & \max\{\Lip_\alpha(f_\obs),\Lip_\alpha(f_\obu)\}\label{ismaxn2}
\end{eqnarray}
for $f=(f_\obs,f_\obu)\in L_\alpha(E,E)$,
to establish the claim we need only show that
both $\theta_1(v)$ and $\theta_2(v)$ are H\"{o}lder
of exponent~$\alpha$ for each $v\in Y$,
and $\Lip_\alpha(\theta_1(v)),\Lip_\alpha(\theta_2(v))\leq\ve$.
In view of Lemmas~\ref{basecom}, \ref{desc}
and \ref{compohoeld},
all three summands in (\ref{vers4})
are H\"{o}lder of exponent~$\alpha$.
Now
\begin{equation}\label{combi1}
\Lip_\alpha(\theta_1(v))\, \leq\,
\Lip_\alpha(A_1\circ v_\obs\circ k)+
\Lip_\alpha(h_\obs\circ k)
+\Lip_\alpha(g_\obs\circ (\id_E+v)\circ k)\, ,
\end{equation}
where
$\Lip_\alpha(A_1\circ v_\obs\circ k)\leq \|A_1\|\Lip_\alpha(v_\obs)\Lip(k)^\alpha
\leq \ve \|A_1\| \Lip(k)^\alpha$
by Lemma~\ref{basecom}
and
\begin{eqnarray*}
\Lip_\alpha(g_\obs\circ (\id_E+v)\circ k) & \leq &
\Lip_\alpha(g_\obs\circ(\id_E+v))\Lip(k)^\alpha\\
&\leq&  \max\{\Lip(g_\obs)(1+\Lip_\alpha(v)),\spread(g_\obs)\}\Lip(k)^\alpha\\
&\leq&  \max\{\Lip(g_\obs)(1+\ve),\spread(g_\obs)\}\Lip(k)^\alpha
\end{eqnarray*}
by Lemmas~\ref{basecom}
and \ref{compohoeld}.
To obtain an upper bound for $\Lip_\alpha(\theta_1(v))$,
we substitute the preceding estimates
into (\ref{combi1}). The upper bound so obtained is the left hand side
of (\ref{exneed1}) and hence $\leq \ve$ by hypotheses.
Thus $\Lip_\alpha(\theta_1(v))\leq\ve$.
Similarly, Lemmas~\ref{basecom}, \ref{desc}
and \ref{compohoeld} show that
all three summands in (\ref{vers3})
are H\"{o}lder of exponent~$\alpha$.
Now
\begin{eqnarray}
\Lip_\alpha(\theta_2(v))& \! \leq\! &
\Lip_\alpha(A^{-1}_2 \circ h_\obu)
+
\Lip_\alpha(A^{-1}_2\circ v_\obu\circ (A+h)) \notag \\
& &\qquad +\; \Lip_\alpha(A^{-1}_2 \circ g_\obu\circ (\id_E+v))\, ; \label{combi2}
\end{eqnarray}
here
$\Lip_\alpha\hspace*{-.3mm}(A^{-1}_2
\hspace*{-.1mm}\circ
\hspace*{-.1mm} v_\obu
\hspace*{-.1mm}\circ\hspace*{-.1mm}
 (A\hspace*{-.2mm}+\hspace*{-.2mm}h))
\hspace*{-.4mm}\leq
\hspace*{-.4mm}
 \|A_2^{-1}\|\hspace*{-.2mm}\Lip_\alpha(v_\obu)\Lip(A\hspace*{-.2mm}+\hspace*{-.2mm}h)^\alpha
\hspace*{-.4mm}\leq\hspace*{-.4mm} \ve \|A_2^{-1}\|\hspace*{-.2mm}\Lip(A\hspace*{-.2mm}+\hspace*{-.2mm}h)^\alpha$
by Lemma~\ref{basecom}
and
\begin{eqnarray*}
\Lip_\alpha(A^{-1}_2\circ g_\obu\circ (\id_E+v))
&\!\! \leq \!\! & \|A^{-1}_2\|
\max\{\Lip(g_\obu)(1+\Lip_\alpha(v)),\spread(g_\obu)\}\\
&\!\! \leq\!\! &
\|A^{-1}_2\| \max\{\Lip(g_\obu)(1+\ve),\spread(g_\obu)\}
\end{eqnarray*}
by Lemmas~\ref{basecom} and \ref{compohoeld}.
Combining (\ref{combi2}) with the preceding estimates,
we get the left hand side of (\ref{exneed2})
as an upper bound for $\Lip_\alpha(\theta_2(v))$.
Hence also $\Lip_\alpha(\theta_2(v))\leq \ve$
and thus $\theta(v)\in Y$,
which completes the proof.
\end{proof}
The conditions (\ref{exneed1}) and (\ref{exneed2})
describe exactly what we need
in the proof, but they are somewhat elusive.
They can be replaced by
stronger (but more tangible) hypotheses,
which we now state.
\begin{la}\label{lazyla}
If $g$ and $h$ are as in Lemma~{\rm\ref{unqbdd}}
and
\begin{eqnarray}
\lefteqn{\frac{\ve \|A_1\|}{(\|A^{-1}\|^{-1}-\Lip(h))^\alpha}
+\max\Big\{\frac{\Lip(h_\obs)}{\|A^{-1}\|^{-1}-\Lip(h)},\spread(h_\obs)\Big\}}\qquad\qquad\qquad
\notag \\[1.5mm]
& & +\, \frac{\max\{\Lip(g_\obs)(1+\ve),\spread(g_\obs)\}}{(\|A^{-1}\|^{-1}-\Lip(h))^\alpha} \; \leq \; \ve\label{lazy1}
\end{eqnarray}
as well as
\begin{eqnarray}
\lefteqn{\|A^{-1}_2\|
\max\{\Lip(h_\obu),\spread(h_\obu)\}
+ \ve \|A_2^{-1}\| (\|A\|+\Lip(h))^\alpha}\hspace*{18mm}\notag \\[1.7mm]
& & \quad \quad +   \, \|A^{-1}_2\| \max\{\Lip(g_\obu)(1+\ve),\spread(g_\obu)\} \leq \ve
,\label{lazy2}
\end{eqnarray}
then the conditions {\rm(\ref{exneed1})} and {\rm(\ref{exneed2})}
from Lemma~{\rm\ref{gethoelder}}
are satisfied.
In particular,
if $\alpha\in\;]0,1[$ and $\ve>0$ are given and we choose $\delta>0$ so small that
\begin{equation}\label{lazy3}
\delta <  \|A^{-1}\|^{-1}\,\quad
\|A^{-1}_2\| (1 +\delta)   < 1,\quad
\|A_1\| +\delta   <  1,
\end{equation}
\begin{equation}\label{lazy4}
2\|A^{-1}_2\|\delta
+ \ve \|A_2^{-1}\| (\|A\|+\delta)^\alpha
+   \|A^{-1}_2\| \max\{\delta(1+\ve),2\delta\}\leq \ve, \;\;\, \mbox{and }\;
\end{equation}
\begin{equation}\label{lazy5}
\frac{\ve \|A_1\|}{(\|A^{-1}\|^{-1}-\delta)^\alpha}
+\max\Big\{\frac{\delta}{\|A^{-1}\|^{-1}-\delta},2\delta\Big\}
+ \frac{\max\{\delta(1+\ve),2\delta\}}{(\|A^{-1}\|^{-1}-\delta)^\alpha} \; \leq \; \ve,
\end{equation}
then
conditions {\rm(\ref{hyp1})},
{\rm(\ref{hyp2})},
{\rm(\ref{exneed1})}
and
{\rm(\ref{exneed2})}
are satisfied
for all bounded, Lip\-schitz maps $g, h\colon E\to E$ with
\begin{equation}\label{lazlaz}
\max\{\|g\|_\infty,\Lip(g)\}\, \leq\,  \delta
\quad\mbox{ and }\quad
\max\{\|h\|_\infty,\Lip(h)\}\, \leq\,  \delta\,.
\end{equation}
\end{la}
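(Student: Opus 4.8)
The whole statement is an exercise in replacing each ingredient of (\ref{exneed1})--(\ref{exneed2}) by a cruder but more explicit upper bound, using Lemmas~\ref{basecom}, \ref{desc}, \ref{globinv} and submultiplicativity of the operator norm; there is no conceptual difficulty.

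I would begin by collecting the two facts on which every estimate rests. Since $\Lip(h)<\|A^{-1}\|^{-1}$ (as $g,h$ are assumed as in Lemma~\ref{unqbdd}), Lemma~\ref{globinv} shows that $k:=(A+h)^{-1}$ is a Lipschitz homeomorphism of $E$ with $\Lip(k)\le(\|A^{-1}\|^{-1}-\Lip(h))^{-1}$, so, $t\mapsto t^\alpha$ being increasing, $\Lip(k)^\alpha\le(\|A^{-1}\|^{-1}-\Lip(h))^{-\alpha}$; also $\Lip(A+h)\le\|A\|+\Lip(h)$. Next, $h_\obs\circ k$ is Lipschitz and bounded (its image is $h_\obs(E)$, so $\spread(h_\obs\circ k)=\spread(h_\obs)$), whence Lemma~\ref{desc} (lowering the exponent from $1$ to $\alpha$) together with Lemma~\ref{basecom} gives
\[
\Lip_\alpha(h_\obs\circ k)\ \le\ \max\{\Lip(h_\obs)\Lip(k),\,\spread(h_\obs)\}\ \le\ \max\Big\{\frac{\Lip(h_\obs)}{\|A^{-1}\|^{-1}-\Lip(h)},\,\spread(h_\obs)\Big\};
\]
similarly, since the linear map $A_2^{-1}$ multiplies distances by at most $\|A_2^{-1}\|$, one gets $\Lip_\alpha(A_2^{-1}\circ h_\obu)\le\|A_2^{-1}\|\max\{\Lip(h_\obu),\spread(h_\obu)\}$. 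Substituting these bounds, $\Lip(k)^\alpha\le(\|A^{-1}\|^{-1}-\Lip(h))^{-\alpha}$ and $\Lip(A+h)^\alpha\le(\|A\|+\Lip(h))^\alpha$ into the left sides of (\ref{exneed1}) and (\ref{exneed2}) turns them, term by term, into the left sides of (\ref{lazy1}) and (\ref{lazy2}); hence (\ref{lazy1}) and (\ref{lazy2}) imply (\ref{exneed1}) and (\ref{exneed2}).

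For the ``in particular'' part I would first use that, $\|.\|$ being adapted to $A$, (\ref{hypb1}) gives $\|\pi_\obs\|\le1$ and $\|\pi_\obu\|\le1$; hence under (\ref{lazlaz}) each of $\Lip(g_\obs),\Lip(g_\obu),\Lip(h_\obs),\Lip(h_\obu)$ and each of $\|g_\obs\|_\infty,\dots,\|h_\obu\|_\infty$ is $\le\delta$, so also $\spread(g_\obs),\dots,\spread(h_\obu)\le2\delta$. The inequalities (\ref{lazy3}) then give $\Lip(h)\le\delta<\|A^{-1}\|^{-1}$, i.e.\ (\ref{hyp1}), and $\|A_2^{-1}\|(1+\Lip(g_\obu))\le\|A_2^{-1}\|(1+\delta)<1$, $\|A_1\|+\Lip(g_\obs)\le\|A_1\|+\delta<1$, i.e.\ (\ref{hyp2}); thus $g,h$ are as in Lemma~\ref{unqbdd}. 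Finally, replacing in (\ref{lazy1}) the quantities $\Lip(h)$ in the denominators, $\Lip(h_\obs)$, $\Lip(g_\obs)$ by $\delta$ and $\spread(h_\obs)$, $\spread(g_\obs)$ by $2\delta$ only enlarges the left side (here $\delta<\|A^{-1}\|^{-1}$ keeps the denominators positive and the expression monotone), producing exactly the left side of (\ref{lazy5}); likewise (\ref{lazy2}) becomes (\ref{lazy4}). Hence (\ref{lazy5}) and (\ref{lazy4}) force (\ref{lazy1}) and (\ref{lazy2}) with right side $\ve$, and the first part yields (\ref{exneed1}) and (\ref{exneed2}).

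The only points needing a little care are the directions of monotonicity ($t\mapsto t^\alpha$ increasing, and $t\mapsto 1/(\|A^{-1}\|^{-1}-t)^\alpha$ increasing for $t<\|A^{-1}\|^{-1}$), so that enlarging $\Lip(h)$ etc.\ to $\delta$ really enlarges the expressions, and the use of $\|\pi_\obs\|,\|\pi_\obu\|\le1$ to pass from $g,h$ to their components; nothing here is a genuine obstacle.
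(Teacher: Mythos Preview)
Your proposal is correct and follows essentially the same route as the paper's proof: both bound $\Lip(k)$ via Lemma~\ref{globinv}, then use Lemmas~\ref{basecom} and~\ref{desc} to estimate $\Lip_\alpha(h_\obs\circ k)$ and $\Lip_\alpha(A_2^{-1}\circ h_\obu)$, and substitute these together with $\Lip(A+h)\le\|A\|+\Lip(h)$ into (\ref{exneed1})--(\ref{exneed2}). Your treatment of the ``in particular'' clause is slightly more explicit than the paper's (which simply declares it obvious from $\spread(f)\le 2\|f\|_\infty$), in that you spell out why $\|\pi_\obs\|,\|\pi_\obu\|\le1$ and track the monotonicity needed when replacing Lipschitz constants by~$\delta$; this is a welcome addition but not a different method.
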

\begin{proof}
Let $k:=(A+h)^{-1}$, as in Lemma~\ref{gethoelder}. Then
\begin{equation}\label{stpp1}
\Lip(k)\,\leq\,
\frac{1}{\|A^{-1}\|^{-1}-\Lip(h)}\,,
\end{equation}
by (\ref{lipforinv}).
Next,
\begin{eqnarray}
\Lip_\alpha(h_\obs \circ k)
& \leq & 
\max\{\Lip(h_\obs\circ k),\spread(h_\obs\circ k)\}\notag \\
& \leq &
\max\Big\{\frac{\Lip(h_\obs)}{\|A^{-1}\|^{-1}-\Lip(h)},
\spread(h_\obs)\Big\},\label{stpp2}
\end{eqnarray}
using Lemma~\ref{desc},
Lemma~\ref{basecom}, and the estimate (\ref{stpp1}).
We also have
\begin{equation}\label{stpp3}
\Lip_\alpha(A^{-1}_2\circ h_\obu)
\leq
\|A^{-1}_2 \|  \Lip_\alpha(h_\obu)
\leq 
\|A^{-1}_2\| \max\{  \Lip(h_\obu),\spread(h_\obu)\},
\end{equation}
using Lemmas~\ref{basecom} and \ref{desc}.
Finally, we have
\begin{equation}\label{stpp4}
\ve \|A_2^{-1}\|\Lip(A+h)^\alpha
\leq
\ve \|A_2^{-1}\| (\|A\| + \Lip(h))^\alpha\,.
\end{equation}
In view of (\ref{stpp1})--(\ref{stpp4}),
it is clear that (\ref{lazy1})
implies (\ref{exneed1})
and (\ref{lazy2}) implies (\ref{exneed2}).
The final assertion of the lemma is now obvious,
using that $\spread(f)\leq 2\|f\|_\infty$
for all bounded maps $f$ between normed spaces.
\end{proof}
\begin{rem}\label{remtherem}
\begin{itemize}
\item[(a)]
Note that, given $h,g$ as in Lemma~\ref{unqbdd}, one can always find $\alpha\in \;]0,1[$
and $\ve>0$ such that (\ref{lazy1}) and (\ref{lazy2})
(and hence also (\ref{exneed1})
and (\ref{exneed2})) are satisfied.
In fact, we have $1-\|A_1\|-\Lip(g_\obs)>0$ by (\ref{hyp2})
and hence also
\begin{equation}\label{lill}
\Delta_{g,h}:=1-\frac{\|A_1\|+\Lip(g_\obs)}{(\|A^{-1}\|^{-1}-\Lip(h))^\alpha}>0
\end{equation}
for sufficiently small $\alpha\in \;]0,1[$.
Instead of (\ref{lazy1}), to simplify the calculation
let us impose a stronger
condition by replacing the second maximum $\max\{\Lip(g_s)(1+\ve),\spread(g_s)\}$
in (\ref{lazy1})
by the larger term
\[
\max\{\Lip(g_\obs),\spread(g_\obs)\} +\ve\Lip(g_\obs)\,.
\]
We can then solve for $\ve$ and see
that the strengthened inequality is equivalent to
\begin{equation}\label{lazy7}
\hspace*{-1.5mm}\ve \geq \frac{\max\Big\{\frac{\Lip(h_\obs)}{\|A^{-1}\|^{-1}-\Lip(h)},\spread(h_\obs)\Big\}+
\frac{\max\{\Lip(g_\obs),\spread(g_\obs)\}}{(\|A^{-1}\|^{-1}-\Lip(h))^\alpha}}{\Delta_{g,h}}\!.\!
\end{equation}
Also, we have $1-\|A_2^{-1}\|(1+\Lip(g_\obu))>0$ by (\ref{hyp2})
and hence
\begin{equation}\label{lill2}
\delta_{g,h}:=
1-\|A_2^{-1}\|((\|A\|+\Lip(h))^\alpha+ \Lip(g_\obu)) >0
\end{equation}
for sufficiently small $\alpha\in \;]0,1[$.
Likewise, replacing $\|A^{-1}_2\|$ times the
second maximum in (\ref{lazy2})
by
\[
\|A^{-1}_2\| \max\{\Lip(g_\obu),\spread(g_\obu)\} +\ve \|A_2^{-1}\|\Lip(g_\obu)\,,
\]
we obtain a stronger condition equivalent
to
\begin{equation}\label{lazy8}
\hspace*{-3.1mm}\ve \geq \frac{\|A^{-1}_2\|(\max\{\Lip(h_\obu),\spread(h_\obu)\}
\!+\!   \max\{\Lip(g_\obu),\spread(g_\obu)\})}{\delta_{g,h}}\!.\!
\end{equation}
Now choose $\ve$ so large that both (\ref{lazy7})
and (\ref{lazy8}) hold.
\item[(b)]
Given $g$ and $h$ as in Lemma~\ref{unqbdd}, we can actually find $\alpha\in \;]0,1[$
and $\ve>0$ such that (\ref{lazy1}) and (\ref{lazy2})
are satisfied simultaneously for $(g,h)$ and $(h,g)$
(i.e., with reversed roles of $h$ and $g$): Simply proceed
as in (a) for both pairs, and replace the values of $\alpha$ obtained by their minimum.
Then choose an $\ve$ 
for this $\alpha$ in both cases, and replace the two values of $\ve$
by their maximum.
\item[(c)]
Note that we did not need to assume that $g(0)=0$
or $h(0)=0$ in our previous results (although, of course,
this case is of primary interest).
\item[(d)]
Because $\spread(f)\leq 2\|f\|_\infty$,
one can replace $\spread(f)$
with $2\|f\|_\infty$ in (\ref{lazy1})
and (\ref{lazy2}) for $f=g_\obs,g_\obu,h_\obs,h_\obu$,
and obtains simpler-looking, alternative conditions 
which also imply (\ref{exneed1}) and (\ref{exneed2}).
\end{itemize}
\end{rem}
\section{Proof of Theorem A}
The assertions of the theorem are
covered by Lemmas~\ref{unqbdd}, \ref{bdhomeo} and \ref{gethoelder}
and Remark~\ref{remtherem}\,(a), setting $h:=0$ there.
\section{Proof of Theorem B}
We give the proof in a form which can be re-used later in the study of
parameter dependence. Avoiding only a trivial case, assume $E\not=\{0\}$.
After a translation, we may (and will) assume that $x=0$.
After shrinking $P$, we may also assume that $P=B^E_r(0)$ for some $r>0$.
Write $f(y)=f(0)+f'(0)(y)+R(y)$; thus
\[
f(y)=A(y)+R(y)\qquad \mbox{for all $y\in B_r^E(0)$,}
\]
with $A:=f'(0)$. Let $E=E_\obs\oplus E_\obu$ with respect to $A$ and
$\|.\|$ be an adapted norm on~$E$.
\begin{numba}
If $\K$ and $E$ are ultrametric, then also the adapted norm $\|.\|$ on $E$ can (and will)
be chosen ultrametric (see Appendix~\ref{fiappe}). In this case, we define
$R_s\colon E \to E$ for $s\in \;]0,r]$ via
\begin{equation}\label{LNew1}
R_s(y):=\left\{
\begin{array}{cl}
R(y) & \mbox{if $\,y\in B^E_s(0)$;}\\
0 & \mbox{else.}
\end{array}
\right.
\end{equation}
Choose $s$ so small that $R|_{B^E_s(0)}$ is Lipschitz (see (\ref{stricvialip})).
If $y,z\in B_s^E(0)$, then $\|R_s(z)-R_s(y)\|=\|R(z)-R(y)\|\leq\Lip(R|_{B^E_s(0)})\|z-y\|$.
If $y,z\in E\setminus B^E_s(0)$, then $\|R_s(z)-R_s(y)\|=0$.
If $z\in B^E_s(0)$ and $y\in E\setminus B^E_s(0)$, then $\|z-y\|=\|y\|>\|z\|$ by (\ref{stronger})
and thus
$\|R_s(z)-R_s(y)\|=\|R(z)\|=\|R(z)-R(0)\|\leq\Lip(R|_{B^E_s(0)})\|z\|
\leq \Lip(R|_{B^E_s(0)})\|z-y\|$. Hence $R_s$ is Lipschitz, with
\begin{equation}
\Lip(R_s)\leq \Lip(R|_{B^E_s(0)})
\end{equation}
(and in fact equality holds).
\end{numba}
\begin{numba}\label{introeta}
In the real case, let $\eta\colon [0,\infty[\to [0,1]$
be a Lipschitz function (with respect to the ordinary absolute value on $\R$)
such that $\eta|_{[0,1]}=1$
and $\eta(t)=0$ for $t\geq 2$. Then
\begin{equation}\label{esteta}
\Lip(\eta)\geq 1\,.
\end{equation}
For $s\in \;]0,r/3]$, define
\[
\xi_s\colon E\to [0,1],\quad \xi_s(y):=\eta(\|y\|/s)
\]
and
\begin{equation}\label{LNew2}
R_s(y):=\left\{
\begin{array}{cl}
\xi_s(y)R(y) & \mbox{if $\,y\in B^E_{3s}(0)$;}\\
0 & \mbox{else.}
\end{array}
\right.
\end{equation}
Choose $s$ so small that $R|_{B^E_{3s}(0)}$ is Lipschitz.
Then
\begin{equation}\label{estliprs}
\Lip(R_s)\leq (1+3\Lip(\eta))\Lip(R|_{B^E_{3s}(0)}),
\end{equation}
by the following arguments. First,
\begin{eqnarray*}
\Lip(R_s|_{B^E_{3s}(0)}) & \leq &
\Lip(\xi_s)\, \|R|_{B^E_{3s}(0)}\|_\infty+\|\xi_s\|_\infty \Lip(R|_{B^E_{3s}(0)})\\
&\leq &
\frac{1}{s}\Lip(\eta)\, 3s\Lip(R|_{B^E_{3s}(0)})+\Lip(R|_{B^E_{3s}(0)})\\
&=& (1+3\Lip(\eta))\Lip(R|_{B^E_{3s}(0)})
\end{eqnarray*}
(using Lemma~\ref{pwplip}
for the first inequality).
If $y\in E\setminus B^E_{3s}(0)$ and $z\in E$,
then $\|R_s(z)-R_s(y)\|\not=0$ implies $z\in B^E_{2s}(0)$.
In this case, $\|z-y\|\geq s$ and therefore
$\|R_s(z)-R_s(y)\|=\|R_s(z)\|\leq \|R(z)\|\leq \Lip(R|_{B^E_{3s}(0)})\|z\|
\leq \Lip(R|_{B^E_{3s}(0)})2s
\leq \Lip(R|_{B^E_{3s}(0)})2\|z-y\|
\leq (1+3\Lip(\eta))\Lip(R|_{B^E_{3s}(0)})\|z-y\|$.
\end{numba}
\begin{numba}\label{chod}
Returning to general $\K$,
given arbitrary $\alpha\in\;]0,1[$ and $\ve>0$
we choose $\delta>0$ so small that (\ref{lazy3}), (\ref{lazy4})
and (\ref{lazy5}) are satisfied.
\end{numba}
\begin{numba}\label{chos1}
In the ultrametric case,
we use (\ref{stricvialip}) to find $s\in\;]0,r]$ such that
\begin{equation}\label{condpr2}
\Lip(R|_{B^E_s(0)})\leq \delta
\end{equation}
and $s\leq 1$. Then $\|R_s(y)\|\leq \Lip(R_s(y))\|y\|\leq \delta s\leq \delta$
whenever $\|R_s(y)\|\not=0$, and hence
\begin{equation}\label{getth}
\|R_s\|_\infty\leq \delta.
\end{equation}
\end{numba}
\begin{numba}\label{chos2}
In the real case,
(\ref{stricvialip}) provides $s\in\;]0,r]$ such that
\begin{equation}\label{condpr3}
\Lip(R|_{B^E_{3s}(0)})\leq \frac{\delta}{1+3\Lip(\eta)}
\end{equation}
and $3s\leq 1$. Then again (\ref{getth}) holds.
\end{numba}
\begin{numba}\label{LNew3}
Now set $g:=R_s$ as just selected, and $h:=0$.
Because $\Lip(g)\leq \delta$ by choice of $s$ and
$\|g\|_\infty\leq\delta$ by (\ref{getth}),
condition (\ref{lazlaz}) is satisfied.
Hence both $(g,h)$ and $(h,g)$ satisfy
the conditions {\rm(\ref{hyp1})},
{\rm(\ref{hyp2})},
{\rm(\ref{exneed1})}
and
{\rm(\ref{exneed2})},
by Lemma~\ref{lazyla}.
Hence there are unique $v,w\in BC(E,E)$
to which all conclusions of
Lemmas~\ref{unqbdd}, \ref{bdhomeo} and \ref{gethoelder}
apply.
In particular, $v,w\in BL_\alpha(E,E)$
with
\begin{equation}\label{LNew8}
\Lip_\alpha(v),\Lip_\alpha(w)\leq \ve\, ,
\end{equation}
and $\id_E+v$ is a homeomorphism
with inverse $\id_E+w$. Since $h(0)=0$ and $g(0)=R(0)=0$, we also have
$v(0)=0$ and $w(0)=0$.
\end{numba}
\begin{numba}\label{singl}
If we are only interested in a single given function $f$,
we can now complete the proof by setting
$V:=B^E_s(0)$,
$U:=(\id_E+v)^{-1}(B^E_s(0))$
and $H:=(\id_E+v)|_U\colon U\to V$.
Since
\begin{equation}\label{splcase}
(A+g)\circ (\id_E+v)\, =\, (\id_E+v)\circ A
\end{equation}
and $R|_V=g|_V$, we then have
\[
f\circ H=f|_V\circ H=(A+g)|_V\circ (\id_E+v)|_U=(\id_E+v)\circ A|_U\,,
\]
from which (\ref{goaleq}) follows. This completes the proof.
\end{numba}
\begin{numba}\label{lasttango}
Since our previous choice of $U$ depends on $v$
(and hence on $f$), it is unsuitable for the study of parameter dependence.
To enable the latter, we need to make a different (usually smaller) choice
of $U$, which we now describe.
It is helpful to observe that
\begin{equation}\label{thetau}
\omega\colon [0,\infty[\to[0,\infty[\,,\quad \omega(a):=a+\ve a^\alpha
\end{equation}
is a monotonically increasing bijection, such that
$\omega(a)\geq a$ (and hence $\omega^{-1}(a)\leq a$)
for all $a\geq 0$.
Now
\begin{equation}\label{continf}
(\id_E+v)^{-1}(B_t^E(0))\,\supseteq\, B_{\omega^{-1}(t)}^E(0)\quad\mbox{for all $\,t>0$.}
\end{equation}
In fact, given $a>0$,
we have $\|y+v(y)\|\leq \|v\|+\Lip_\alpha(v)\|y\|^\alpha\leq a+\ve a^\alpha=\omega(a)$
for each $y\in B^E_a(0)$, and thus
\[
(\id_E+v)(B^E_a(0))\, \sub \, B^E_{\omega(a)}(0)\, .
\]
Hence $B^E_a(0) \sub (\id_E+v)^{-1}(B^E_{\omega(a)}(0))$,
entailing (\ref{continf}) (with $a:=\omega^{-1}(t)$).\\[2.5mm]
We now set $U:=B^E_{\omega^{-1}(s)}(0)\sub B^E_s(0)$.
Since $V:=(\id_E+v)(U)\sub B^E_s(0)$
by the preceding discussion,
we can set $H:=(\id_E+v)|_U\colon U\to V$
and complete the discussion as in \ref{singl}.
\end{numba}
\section{Parameter dependence of the conjugacy}
Before we can study parameter dependence of the conjugacies constructed earlier,
we compile various auxiliary results.
The first lemma is probably part of the folklore.
See \cite[Theorem 21]{IRCOMP} for the Lipschitz case;
for completeness, the general proof is given in Appendix~\ref{nowaB}.
\begin{la}[H\"{o}lder dependence of fixed points on parameters]\label{hoedepla}
\hspace*{-1.5mm}Let\linebreak
$(X,d_X)$ and
$(Y,d_Y)$ be metric spaces,
$\alpha> 0$ and $f\colon X\times Y\to Y$
be a mapping with the following three properties:
\begin{itemize}
\item[\rm(a)]
The family $(f^y)_{y\in Y}$ of the maps
$f^y\colon X\to Y$, $f^y(x):=f(x,y)$
is uniformly H\"{o}lder of exponent $\alpha$,
in the sense that each $f^y$ is H\"{o}lder of exponent $\alpha$
and
\[
\mu\; :=\; \sup\{ \Lip_\alpha(f^y)\colon y\in Y\}\;<\; \infty\,.
\]
\item[\rm(b)]
The maps
$f_x\colon Y\to Y$, $y\mto f(x,y)$,
with $x\in X$, form a uniform family $(f_x)_{x\in X}$
of contractions,
in the sense that each $f_x$ is a contraction and
\[
\lambda \; :=\; \sup\{\Lip(f_x)\colon x\in X\}\;<\; 1\,.
\]
\item[\rm(c)]
For each $x\in X$, there exists a fixed point $y_x\in Y$
for $f_x$.
\end{itemize}
Then $y_x$ is uniquely determined and
the map
$\phi\colon X\to Y$, $\phi(x):= y_x$
is H\"{o}lder of exponent~$\alpha$, with
\[
\Lip_\alpha(\phi)\; \leq \; \frac{\mu}{1-\lambda} \,.
\]
\end{la}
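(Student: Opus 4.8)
The plan is to establish uniqueness of the fixed point $y_x$ first, then exhibit $\phi$ as a pointwise limit of iterates and apply Lemma~\ref{limithoel} to transfer the H\"older bound to the limit. For uniqueness: if $y,y'$ are both fixed points of $f_x$, then $d_Y(y,y')=d_Y(f_x(y),f_x(y'))\leq\lambda\,d_Y(y,y')$ with $\lambda<1$, forcing $d_Y(y,y')=0$. So $\phi$ is well defined.

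For the H\"older estimate, fix $x_0\in X$ and iterate: set $\psi_0\colon X\to Y$, $\psi_0(x):=y_{x_0}$ (the constant map at a chosen fixed point), and $\psi_{n+1}(x):=f(x,\psi_n(x))=f^{\psi_n(x)}(x)$. I would show by induction that each $\psi_n$ is H\"older of exponent $\alpha$ with $\Lip_\alpha(\psi_n)\leq\mu(1+\lambda+\cdots+\lambda^{n-1})\leq\mu/(1-\lambda)$. The base case $n=0$ is trivial ($\Lip_\alpha(\psi_0)=0$). For the step: given $x,x'\in X$,
\[
d_Y(\psi_{n+1}(x),\psi_{n+1}(x'))\leq d_Y\bigl(f(x,\psi_n(x)),f(x,\psi_n(x'))\bigr)+d_Y\bigl(f(x,\psi_n(x')),f(x',\psi_n(x'))\bigr),
\]
where the first term is $\leq\lambda\,d_Y(\psi_n(x),\psi_n(x'))\leq\lambda\Lip_\alpha(\psi_n)d_X(x,x')^\alpha$ using (b), and the second is $\leq\mu\,d_X(x,x')^\alpha$ using (a) applied to the map $f^{\psi_n(x')}$. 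Thus $\Lip_\alpha(\psi_{n+1})\leq\lambda\Lip_\alpha(\psi_n)+\mu$, and the geometric-series bound follows.

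Next I would check that $\psi_n(x)\to\phi(x)=y_x$ pointwise: for fixed $x$, the sequence $(\psi_n(x))_{n}$ is exactly the Banach iteration $\psi_{n+1}(x)=f_x(\psi_n(x))$ for the contraction $f_x$ (with $\Lip(f_x)\leq\lambda<1$ and existing fixed point $y_x$ by (c)), so it converges to the unique fixed point $y_x$ by the standard argument, since $d_Y(\psi_n(x),y_x)\leq\lambda^n d_Y(\psi_0(x),y_x)\to0$. Now Lemma~\ref{limithoel}, applied to the net (sequence) $(\psi_n)$ in $L_\alpha(X,Y)$ with $\sup_n\Lip_\alpha(\psi_n)\leq\mu/(1-\lambda)$, gives $\phi\in L_\alpha(X,Y)$ and $\Lip_\alpha(\phi)\leq\mu/(1-\lambda)$, as claimed.

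The only mild subtlety — the ``main obstacle'' such as it is — is bookkeeping in the inductive step: one must apply hypothesis (a) with the correct second argument (the point $\psi_n(x')$, held fixed, viewing $f^{\psi_n(x')}$ as an $\alpha$-H\"older map of the first variable) and hypothesis (b) with the correct first argument ($x$, held fixed), rather than mixing them; once this split is made cleanly the estimates are routine. No completeness of $Y$ is needed because existence of fixed points is assumed outright in (c), so Lemma~\ref{limithoel} does all the remaining work.
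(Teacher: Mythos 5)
Your proof is correct and follows essentially the same route as the paper's: both establish uniqueness via the contraction estimate and then obtain the bound $\mu\sum_{k=0}^{n-1}\lambda^k$ by an induction whose step splits the triangle inequality into a $\lambda\cdot(\text{previous bound})$ term from hypothesis (b) and a $\mu\,d_X(x,x')^\alpha$ term from hypothesis (a), before passing to the limit $n\to\infty$. The only cosmetic difference is that the paper tracks the pairwise quantity $d_Y(f_v^n(y_w),y_w)$ directly, whereas you track $\Lip_\alpha(\psi_n)$ for the iterate functions and invoke Lemma~\ref{limithoel} at the end; the underlying computation is identical.
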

\begin{rem}\label{userm}
Note that condition (a) of Lemma~\ref{hoedepla}
is satisfied in particular if
$f$ is H\"{o}lder of exponent $\alpha$
with respect to some metric $d$ on $X\times Y$
such that $d((x_1,y),(x_2,y))=d_X(x_1,x_2)$
for all $x_1,x_2\in X$ and $y\in Y$.
Condition (c) is satisfied whenever the metric space $(Y,d_Y)$
is complete (and $Y\not=\emptyset$),
by Banach's Fixed Point Theorem.
\end{rem}
The dependence of $w$ on $v$ in the situation of Lemma~\ref{globinv}
is considered next.
\begin{la}\label{globinvpar}
Let $(E,\|.\|)$ be a Banach space over a valued field $(\K,|.|)$
$($such that $E\not=\{0\})$,
and $0<\lambda <1$.
Let $A\colon E\to E$ be
an automorphism of topological vector spaces,
and $\Omega$ be the set of
all bounded, Lipschitz maps
$v\colon E\to E$ such that
\[
\Lip(v)\|A^{-1}\|\, \leq \, \lambda\,.
\]
Equip $\Omega$ with the metric given by
$d_\infty(v_1,v_2):=\|v_1-v_2\|_\infty$.
Given $v\in \Omega$, let
\[
w_v:=(A+v)^{-1}-A^{-1} \,.
\]
Then the map $\phi\colon \Omega\to BC(E,E)$, $v\mto w_v$
is Lipschitz, with
\[
\Lip(\phi)\, \leq \, \frac{\|A^{-1}\|}{1-\lambda}\, .
\]
\end{la}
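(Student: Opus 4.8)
The plan is to reduce the Lipschitz dependence of $v \mapsto w_v$ to the identity \eqref{enabiter} from the proof of Lemma~\ref{globinv}, namely $w_v = -A^{-1}\circ v\circ (A+v)^{-1}$, and then estimate the $\sup$-norm difference $\|w_{v_1}-w_{v_2}\|_\infty$ by splitting it in the usual way into a term where the outer factor $v$ changes and a term where the inner argument $(A+v)^{-1}$ changes. First I would record that for $v\in\Omega$ we have $\Lip(v)\le \lambda\|A^{-1}\|^{-1}<\|A^{-1}\|^{-1}$, so Lemma~\ref{globinv} applies: $A+v$ is a homeomorphism, $(A+v)^{-1}$ is Lipschitz with $\Lip((A+v)^{-1})\le (\|A^{-1}\|^{-1}-\Lip(v))^{-1}\le (\|A^{-1}\|^{-1}-\lambda\|A^{-1}\|^{-1})^{-1}=\frac{\|A^{-1}\|}{1-\lambda}$, and $w_v$ is bounded; hence $\phi$ indeed maps into $BC(E,E)$.

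Next, for $v_1,v_2\in\Omega$ write $k_i:=(A+v_i)^{-1}$ and estimate, for $x\in E$,
\[
\|w_{v_1}(x)-w_{v_2}(x)\| \;=\; \|A^{-1}v_1(k_1(x)) - A^{-1}v_2(k_2(x))\|
\;\le\; \|A^{-1}\|\bigl(\|v_1(k_1(x))-v_2(k_1(x))\| + \|v_2(k_1(x))-v_2(k_2(x))\|\bigr).
\]
The first summand is $\le \|A^{-1}\|\,\|v_1-v_2\|_\infty$. The second is $\le \|A^{-1}\|\,\Lip(v_2)\,\|k_1(x)-k_2(x)\|$, so I need a bound on $\|k_1-k_2\|_\infty$ in terms of $\|v_1-v_2\|_\infty$. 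Writing $y:=k_1(x)$, we have $(A+v_1)(y)=x=(A+v_2)(k_2(x))$, hence $(A+v_2)(y)-(A+v_2)(k_2(x)) = (A+v_2)(y)-(A+v_1)(y) = v_2(y)-v_1(y)$; applying the Lipschitz inverse $k_2=(A+v_2)^{-1}$ gives $\|y-k_2(x)\| \le \Lip(k_2)\,\|v_2(y)-v_1(y)\| \le \frac{\|A^{-1}\|}{1-\lambda}\,\|v_1-v_2\|_\infty$. Substituting back and using $\Lip(v_2)\le \lambda\|A^{-1}\|^{-1}$, the second summand is $\le \|A^{-1}\|\cdot\lambda\|A^{-1}\|^{-1}\cdot\frac{\|A^{-1}\|}{1-\lambda}\|v_1-v_2\|_\infty = \frac{\lambda\|A^{-1}\|}{1-\lambda}\,\|v_1-v_2\|_\infty$. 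Adding the two contributions yields $\|w_{v_1}(x)-w_{v_2}(x)\| \le \|A^{-1}\|\bigl(1 + \frac{\lambda}{1-\lambda}\bigr)\|v_1-v_2\|_\infty = \frac{\|A^{-1}\|}{1-\lambda}\,\|v_1-v_2\|_\infty$, which is the claimed bound.

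The only mild subtlety — and the step I would be most careful about — is keeping the bookkeeping straight in the chain $(A+v_2)(y)-(A+v_2)(k_2(x)) = v_2(y)-v_1(y)$, i.e.\ making sure one cancels the linear part $A$ correctly and applies the right inverse; everything else is a routine triangle-inequality-and-Lipschitz argument. I should also note that $\Omega\ne\emptyset$ (it contains $v=0$), so the statement is not vacuous, and that the bound is uniform over $\Omega$ precisely because the $\Lip$-control on $v$ was built into the definition of $\Omega$. Alternatively, one could derive the result from Lemma~\ref{hoedepla} by viewing $w_v$ as the fixed point of $u\mapsto -A^{-1}\circ v\circ (A^{-1}+u)$ with parameter $v$, but the direct computation above is shorter and self-contained.
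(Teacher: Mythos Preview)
Your argument is correct, but it proceeds differently from the paper. The paper proves this lemma by invoking Lemma~\ref{hoedepla} (H\"older dependence of fixed points on parameters): it sets $h(v,u):=-A^{-1}\circ v\circ(A^{-1}+u)$, notes (via~\eqref{enabiter}) that $w_v$ is the fixed point of the contraction $h_v=h(v,\cdot)$ on $BC(E,E)$, checks that $\Lip(h_v)\le\|A^{-1}\|\Lip(v)\le\lambda$ and $\Lip(h(\cdot,u))\le\|A^{-1}\|$, and reads off $\Lip(\phi)\le\|A^{-1}\|/(1-\lambda)$. This is precisely the alternative you mention in your last paragraph. Your route instead uses the closed-form identity $w_v=-A^{-1}\circ v\circ(A+v)^{-1}$ and a direct triangle-inequality splitting, together with a pointwise bound on $\|(A+v_1)^{-1}-(A+v_2)^{-1}\|_\infty$ obtained from the Lipschitz inverse estimate of Lemma~\ref{globinv}. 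Your approach is more elementary and self-contained (it needs no abstract fixed-point-dependence lemma), while the paper's approach is more modular and reuses the same machinery (Lemma~\ref{hoedepla}) that drives the later parameter-dependence results. Both yield exactly the same constant.
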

\begin{proof}
Consider the map
\[
h\colon \Omega \times BC(E,E)\to BC(E,E)\,,\quad
h(v,u):=-A^{-1}\circ v\circ (A^{-1}+u)\, .
\]
We know from (\ref{enabiter})
that $w_v$ satisfies
\[
w_v\, =\, -A^{-1}\circ v\circ (A^{-1}+w_v)\, .
\]
Thus $w_v$ is a fixed point of $h_v:=h(v,.)$,
and it only remains to verify the hypotheses
of Lemma~\ref{hoedepla} for~$h$,
with $\mu\leq \|A^{-1}\|$ and the given~$\lambda$.
Each $h_v$ is Lipschitz, with
$\Lip(h_v)\leq \|A^{-1}\|\Lip(v)\leq \lambda$.
Hence $(h_v)_{v\in \Omega}$ is a uniform family of contractions.
Fix $u\in BC(E,E)$.
Given $v_1,v_2\in \Omega$,
we have
\[
\|h(v_2,u)-h(v_1,u)\|_\infty
=
\|A^{-1}\circ (v_2 -v_1)\circ (A^{-1}+u)\|_\infty\leq \|A^{-1}\|\,\|v_2-v_1\|_\infty\,.
\]
Hence $h(.,u)\colon \Omega\to BC(E,E)$ is Lipschitz with
$\Lip(h(.,u))\leq \|A^{-1}\|$, which completes the proof.
\end{proof}
A linear map $A\colon E\to F$
between Banach spaces over a locally compact,
valued field is called a \emph{compact operator}
if $A(B)$ is relatively compact in~$F$
for each bounded subset $B\sub E$
(or equivalently, if $A(B^E_1(0))\sub F$
is relatively compact). Then $A$ is continuous.
As it is similar to the classical real case,
we relegate the proof of the next result to the appendix
(Appendix~\ref{appnowC}).
\begin{la}\label{compop}
Let $(K,d)$ be a compact metric space,
$(E,\|.\|)$ be a normed space
over a valued field $(\K,|.|)$,
and $\alpha > \beta>0$.
Then $L_\alpha(K,E)\sub L_\beta(K,E)$.
Assume that, moreover,
$\K$ is locally compact and $E$
of finite dimension. If $|.|$ is ultrametric,
assume also that $d$ is ultrametric.
Then the inclusion map
\[
j_{\beta,\alpha}\colon L_\alpha(K,E)\to L_\beta(K,E)\,,\quad f\mto f
\]
is a compact operator.
\end{la}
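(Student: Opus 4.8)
The plan is to treat the two assertions separately, keeping the genuinely topological one for last. For the inclusion $L_\alpha(K,E)\subseteq L_\beta(K,E)$, note that $K$, being compact, has finite diameter, so any $f\in L_\alpha(K,E)$ satisfies $\spread(f)\le \Lip_\alpha(f)\,(\mathrm{diam}\,K)^\alpha<\infty$; Lemma~\ref{desc} then yields $f\in L_\beta(K,E)$ with $\Lip_\beta(f)\le\max\{\Lip_\alpha(f),\spread(f)\}$. Combining this with the trivial bound on $\|f\|_\infty$ gives $\|f\|_\beta\le c\,\|f\|_\alpha$ for a constant $c=c(\mathrm{diam}\,K,\alpha)$, so $j_{\beta,\alpha}$ is a well-defined continuous linear map. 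This part uses neither local compactness of $\K$ nor finiteness of $\dim E$.

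For the compactness of $j_{\beta,\alpha}$ under the additional hypotheses, I would fix a bounded set $B\subseteq L_\alpha(K,E)$, put $R:=\sup_{f\in B}\|f\|_\alpha<\infty$, and let $(f_n)_{n\in\N}$ be a sequence in $B$. Since $\K$ is locally compact (hence complete) and $E$ is finite-dimensional, the closed ball $\wb{B}^E_R(0)$ is compact, and every $f_n$ takes values there; the family $(f_n)$ is also equicontinuous, being uniformly H\"older of exponent $\alpha$ with constant $\le R$. By the Arzel\`a--Ascoli theorem we may, after passing to a subsequence, assume $f_n\to f$ uniformly on $K$ for some $f\in C(K,E)$. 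By Lemma~\ref{limithoel}, $f\in L_\alpha(K,E)$ with $\Lip_\alpha(f)\le R$, and hence $f\in L_\beta(K,E)$ by the first part.

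It then remains to upgrade uniform convergence to convergence in $\|.\|_\beta$. Setting $g_n:=f_n-f$, we have $\|g_n\|_\infty\to 0$ while $\Lip_\alpha(g_n)\le 2R$ for all $n$, and for $x\ne y$ in $K$ one interpolates
\[
\frac{\|g_n(x)-g_n(y)\|}{d(x,y)^\beta}
=\Bigl(\frac{\|g_n(x)-g_n(y)\|}{d(x,y)^\alpha}\Bigr)^{\!\beta/\alpha}\|g_n(x)-g_n(y)\|^{1-\beta/\alpha}
\le (2R)^{\beta/\alpha}\bigl(2\|g_n\|_\infty\bigr)^{1-\beta/\alpha},
\]
where the right-hand side tends to $0$ because $\beta<\alpha$; hence $\Lip_\beta(g_n)\to 0$ and so $\|f_n-f\|_\beta=\max\{\|g_n\|_\infty,\Lip_\beta(g_n)\}\to 0$. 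Thus every sequence in $j_{\beta,\alpha}(B)$ has a subsequence converging in $L_\beta(K,E)$, which is a Banach space since $K$ is compact; so $j_{\beta,\alpha}(B)$ is relatively compact, i.e.\ $j_{\beta,\alpha}$ is a compact operator.

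The main obstacle is precisely the interpolation step: uniform convergence alone controls no H\"older seminorm, and only the strict gap $\alpha>\beta$ lets one trade the bounded $\alpha$-H\"older ratio of $g_n$ against its small oscillation to force $\Lip_\beta(g_n)\to 0$. The other delicate point is feeding Arzel\`a--Ascoli: equicontinuity is immediate, but pointwise relative compactness of $\{f_n(x):n\in\N\}$ is exactly where local compactness of $\K$ together with $\dim E<\infty$ enters, via compactness of closed balls in $E$ — and these hypotheses are genuinely needed, since without either one the inclusion fails to be compact.
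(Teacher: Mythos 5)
Your proof is correct, but it takes a genuinely different route from the paper's. The paper argues via a closed embedding: it maps $L_\beta(K,E)$ into $C(K,E)\times BC(D,E)$ (with $D$ the off-diagonal set in $K\times K$) by $f\mapsto\bigl(f,\;(x,y)\mapsto (f(y)-f(x))/\zeta(d(y,x)^\beta)\bigr)$, checks that this is a topological embedding with closed image, and then establishes relative compactness of the images of the unit $\|.\|_\alpha$-ball under \emph{both} components by two separate applications of Ascoli's theorem; the second application (equicontinuity of the difference quotients on $D_\sigma$, followed by extension to $\wb{D}$) is the technical heart of Appendix~C. The auxiliary function $\zeta$ is needed there because that component divides the vector $f(y)-f(x)\in E$ by a scalar, which must be an element of $\K$ rather than the real number $d(y,x)^\beta$; this is also precisely where the extra hypothesis that $d$ be ultrametric enters (to make $(x,y)\mapsto\zeta(d(x,y)^\beta)$ continuous). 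You instead make a single application of Arzel\`a--Ascoli to extract a uniformly convergent subsequence and then upgrade uniform convergence to $\|.\|_\beta$-convergence via the interpolation bound $\Lip_\beta(g)\le\Lip_\alpha(g)^{\beta/\alpha}(2\|g\|_\infty)^{1-\beta/\alpha}$, which is valid because you only ever divide real numbers by real numbers. Your argument is therefore shorter and more elementary, dispenses with $\zeta$ and with the ultrametricity assumption on $d$ (so it proves a marginally stronger statement), and all the ingredients you invoke (Lemmas~\ref{desc} and \ref{limithoel}, compactness of closed balls in a finite-dimensional space over a locally compact field, completeness of $L_\beta(K,E)$) are available in the paper; what the paper's longer route buys is essentially only the reusable fact that $L_\beta(K,E)$ embeds as a closed subspace of $C(K,E)\times BC(D,E)$.
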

\begin{la}\label{easgtinhoel}
Let $(K,d)$ be a compact metric space and $X\sub K$ be a dense subset.
Let $(E,\|.\|)$ be a finite-dimensional normed space
over a valued field $(\K,|.|)$ that is locally compact,
and $\alpha> \beta>0$. If $|.|$ is ultrametric,
assume that also $d$ is ultrametric.
Let $B\sub BL_\alpha(X,E)$ be bounded; thus
\[
\sup_{f\in B}\|f\|_\infty\,<\,\infty\quad\mbox{and}\quad
\sup_{f\in B}\Lip_\alpha(f)\,<\, \infty\,.
\]
Then $BC(X,E)$ and $BL_\beta(X,E)$ induce the same
topology on $B$.
\end{la}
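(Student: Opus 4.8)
The idea is to compare the two topologies on $B$ by showing each is finer than the other. One inclusion is free: since $X$ is dense in $K$ and $E$ is complete, each $f\in BL_\alpha(X,E)$ extends uniquely to a map $\wb f\in L_\alpha(K,E)$ with $\Lip_\alpha(\wb f)=\Lip_\alpha(f)$ and $\|\wb f\|_\infty=\|f\|_\infty$; and by Lemma~\ref{desc}, $\wb f\in L_\beta(K,E)$ as well, with $\|\wb f\|_\beta\leq\max\{\Lip_\alpha(\wb f),\spread(\wb f)\}\leq\max\{\Lip_\alpha(f),2\|f\|_\infty\}$ bounded uniformly over $f\in B$. Restriction to $X$ is an isometry for the relevant norms, so $B$ sits inside a bounded subset $\wb B$ of $L_\alpha(K,E)$, and the $BL_\beta(X,E)$-topology on $B$ corresponds to the $L_\beta(K,E)$-topology on $\wb B$. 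Also the $BC(X,E)$-topology corresponds to the $C(K,E)$-topology (sup norm), and the $BC(X,E)$-topology is obviously coarser than the $BL_\beta(X,E)$-topology since $\|\cdot\|_\infty\leq\|\cdot\|_\beta$. So it remains to prove: on the bounded set $\wb B\sub L_\alpha(K,E)$, the sup-norm topology is finer than (hence equal to) the $\|\cdot\|_\beta$-topology.

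For that I would invoke Lemma~\ref{compop}: under the stated hypotheses ($\K$ locally compact, $\dim E<\infty$, $d$ ultrametric when $|.|$ is), the inclusion $j_{\beta,\alpha}\colon L_\alpha(K,E)\to L_\beta(K,E)$ is a compact operator. Fix $\alpha>\gamma>\beta$ (possible since $\alpha>\beta$); then $j_{\gamma,\alpha}\colon L_\alpha(K,E)\to L_\gamma(K,E)$ is also compact by the same lemma, so $\wb B$, being bounded in $L_\alpha(K,E)$, has relatively compact image in $L_\gamma(K,E)$ — equivalently, $\wb B$ is relatively compact in $L_\gamma(K,E)$ after identifying it with its image (the relevant maps are injective). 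Now I use an interpolation-type estimate: for $g\in L_\alpha(K,E)$ one has, for all $x,y\in K$,
\[
\|g(x)-g(y)\|\leq \|g(x)-g(y)\|^{1-\beta/\gamma}\cdot\|g(x)-g(y)\|^{\beta/\gamma}\leq (2\|g\|_\infty)^{1-\beta/\gamma}\bigl(\Lip_\gamma(g)d(x,y)^\gamma\bigr)^{\beta/\gamma},
\]
whence $\Lip_\beta(g)\leq (2\|g\|_\infty)^{1-\beta/\gamma}\,\Lip_\gamma(g)^{\beta/\gamma}$ and therefore
\[
\|g\|_\beta\;\leq\;\max\bigl\{\|g\|_\infty,\ (2\|g\|_\infty)^{1-\beta/\gamma}\Lip_\gamma(g)^{\beta/\gamma}\bigr\}.
\]
Applying this to differences $g=f_1-f_2$ of elements of $\wb B$: given a net $(f_i)$ in $B$ with $f_i\to f$ in $BC(X,E)$, i.e. $\wb f_i\to\wb f$ in sup norm, relative compactness of $\wb B$ in $L_\gamma(K,E)$ gives $\sup_i\Lip_\gamma(\wb f_i-\wb f)<\infty$ (the difference set is bounded in $L_\gamma$), so the displayed estimate forces $\|\wb f_i-\wb f\|_\beta\to 0$, i.e. $f_i\to f$ in $BL_\beta(X,E)$. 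Hence the sup-norm convergence implies $\|\cdot\|_\beta$-convergence on $B$, which is the missing inclusion.

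**Main obstacle.** The one nonformal ingredient is turning ``relatively compact in $L_\gamma(K,E)$'' into ``$\Lip_\gamma$ of differences stays bounded,'' and more importantly arranging the argument so that the implication runs in the right direction: compactness of the inclusion is what lets a sup-norm-Cauchy net in $\wb B$ be upgraded. A clean way is to argue by contradiction — if the two topologies on $B$ differed, there would be a net $f_i\to f$ in sup norm with $\|f_i-f\|_\beta\geq c>0$; by compactness of $j_{\beta,\alpha}$ (applied to the bounded set $\wb B-\wb B$ in $L_\alpha$) pass to a subnet with $\wb f_i\to g$ in $L_\beta(K,E)$; the $L_\beta$-limit is also a sup-norm limit, so $g=\wb f$, contradicting $\|\wb f_i-\wb f\|_\beta\geq c$. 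This contradiction route avoids the explicit interpolation inequality altogether and is probably the shortest; I would present it that way, after first disposing of the easy inclusion and the extension/restriction identifications described above.
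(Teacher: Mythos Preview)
Your contradiction route is essentially the paper's argument, only unpacked into nets: the paper observes directly that by Lemma~\ref{compop} the closure of $\wb B$ in $L_\beta(K,E)$ is compact, and then invokes the general fact that a coarser Hausdorff topology (here the sup-norm topology) on a compact space must coincide with the given one --- no nets or contradiction needed, and the restriction to $B$ is immediate. The extension/restriction reduction from $X$ to $K$ is handled exactly as you describe.

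Your interpolation route, by contrast, is a genuinely different and more elementary path. The inequality
\[
\Lip_\beta(g)\;\leq\;(2\|g\|_\infty)^{1-\beta/\alpha}\,\Lip_\alpha(g)^{\beta/\alpha}
\]
(take $\gamma=\alpha$; the intermediate exponent and the appeal to relative compactness in $L_\gamma$ are unnecessary detours, since all you actually use is that $\Lip_\gamma$ stays bounded on $\wb B-\wb B$, and that follows already from boundedness in $L_\alpha$ via Lemma~\ref{desc}) shows directly that sup-norm convergence of differences in $B$ forces $\|\cdot\|_\beta$-convergence. This avoids Lemma~\ref{compop} --- and hence its Ascoli-type proof --- entirely. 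What the paper's route buys is brevity once Lemma~\ref{compop} is available; what your interpolation buys is independence from that machinery and a quantitative estimate relating the two norms on $B$.
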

\begin{proof}
Assume first that $X=K$.
By Lemma~\ref{compop}, the closure $\wb{B}\sub L_\beta(K,E)$ is compact.
Because the topology on $\wb{B}$ induced by $C(K,E)$ is Hausdorff and coarser
than the previous compact topology, the two topologies coincide.
The same then holds for the topologies on the smaller set~$B$.
In the general case,
each $f\in BL_\alpha(X,E)$ extends (by uniform continuity)
uniquely to a continuous function $\wt{f}\colon K \to E$.
Then $\Lip_\alpha(f)=\Lip_\alpha(\wt{f})$
(as we can pass to limits in $(x,y)$ in the H\"{o}lder condition),
and thus $BL_\alpha(X,E)\to BL_\alpha(K,E)$, $f\mapsto \widetilde{f}$
is an isometric isomorphism. Likewise with $\beta$ in place of $\alpha$.
The assertion hence follows from the result for maps on~$K$, as just proved.
\end{proof}
\begin{thm}\label{globparthm}
Let $E$ be a Banach space over a valued field $(\K,|.|)$
and $d_\infty\colon BC(E,E)^2\to[0,\infty[$, $d_\infty(h_1,h_2):=\|h_1-h_2\|_\infty$ be the supremum metric.
Let $A\colon E\to E$ be a hyperbolic automorphism
and $\alpha\in \,]0,1[$ as well as $\ve,\delta>0$ be such that
{\rm(\ref{lazy3})--(\ref{lazy5})} from Lemma~{\rm\ref{lazyla}}
are satisfied. Let $\Omega$ be the set of all bounded, Lipschitz maps $g\colon E\to E$ such that $\max\{\|g\|_\infty,\Lip(g)\}\leq \delta$.
For $g\in\Omega$, let $v_g, w_g\colon E\to E$ be the bounded continuous
maps determined by
\[
(A+g)\circ (\id_E+v_g)\;=\; (\id_E+v_g)\circ A
\]
and $w_g:=(\id_E+v_g)^{-1}-\id_E$. Set $\sigma(g):=v_g$, $\tau(g):=w_g$.
Then
$\sigma$ is Lipschitz as map from $(\Omega,d_\infty)$ to $(BC(E,E),d_\infty)$,
and $\tau\colon (\Omega,d_\infty)\to(BC(E,E),d_\infty)$
is H\"{o}lder of exponent $\alpha$.
\end{thm}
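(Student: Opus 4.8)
The plan is to identify both $\sigma$ and $\tau$ with the solution maps of suitable parametrized fixed-point problems and then invoke Lemma~\ref{hoedepla} (H\"older dependence of fixed points on parameters), exactly as was done in Lemma~\ref{globinvpar}. For $\sigma$, recall from the proof of Lemma~\ref{unqbdd} that $v_g$ is the unique fixed point of the contraction $\theta=(\theta_1,\theta_2)\colon BC(E,E)\to BC(E,E)$ built from $A$ and the perturbation (with $h:=0$). So I would consider the map
\[
\Theta\colon \Omega\times BC(E,E)\to BC(E,E),\qquad \Theta(g,u):=\theta^{(g)}(u),
\]
where $\theta^{(g)}$ is the operator $\theta$ associated to the pair $(g,0)$. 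Lemma~\ref{unqbdd} already shows that each $\Theta(g,\cdot)$ is a contraction with constant $\leq\Lambda\leq\Lambda_\delta<1$ uniformly in $g\in\Omega$ (here $\Lambda_\delta$ denotes the bound obtained from (\ref{lazy3}) with $\Lip(g),\Lip(h)\leq\delta$), and that a fixed point exists for each $g$ since $BC(E,E)$ is complete; so hypotheses (b) and (c) of Lemma~\ref{hoedepla} hold. It remains to check hypothesis (a) with $\alpha=1$: that $g\mapsto\Theta(g,u)$ is uniformly Lipschitz in $u$. Fixing $u\in BC(E,E)$ and writing $k:=A^{-1}$ (since $h=0$, $(A+h)^{-1}=A^{-1}$), one reads off from (\ref{vers4}) and (\ref{vers3}) that
\[
\theta_1^{(g_1)}(u)-\theta_1^{(g_2)}(u)=\bigl(g_{1,\obs}-g_{2,\obs}\bigr)\circ(\id_E+u)\circ A^{-1},
\]
\[
\theta_2^{(g_1)}(u)-\theta_2^{(g_2)}(u)=A_2^{-1}\circ\bigl(g_{2,\obu}-g_{1,\obu}\bigr)\circ(\id_E+u),
\]
so $\|\Theta(g_1,u)-\Theta(g_2,u)\|_\infty\leq\max\{1,\|A_2^{-1}\|\}\,\|g_1-g_2\|_\infty$, a bound independent of $u$. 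Lemma~\ref{hoedepla} (with $\alpha=1$, $\mu\leq\max\{1,\|A_2^{-1}\|\}$, $\lambda\leq\Lambda_\delta$) then yields that $\sigma\colon g\mapsto v_g$ is Lipschitz, with $\Lip(\sigma)\leq\max\{1,\|A_2^{-1}\|\}/(1-\Lambda_\delta)$.

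For $\tau$, I would factor it through $\sigma$. By Lemma~\ref{bdhomeo}, $w_g=(\id_E+v_g)^{-1}-\id_E$, i.e. $\id_E+w_g=(A'+v_g)^{-1}$ where $A':=\id_E$ (an automorphism of $E$), so writing $\psi(v):=(\id_E+v)^{-1}-\id_E$ we have $\tau=\psi\circ\sigma$. To apply Lemma~\ref{globinvpar} to $\psi$ (with $A$ there being $\id_E$, hence $\|A^{-1}\|=1$), I need every $v_g$, $g\in\Omega$, to lie in the relevant domain $\Omega'$, i.e. $\Lip(v_g)\leq\lambda'<1$ for some fixed $\lambda'$. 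But by Lemma~\ref{gethoelder}/Lemma~\ref{lazyla}, under (\ref{lazy3})--(\ref{lazy5}) we have $\Lip_\alpha(v_g)\leq\ve$ for all $g\in\Omega$; this is only H\"older control, not Lipschitz control, which is precisely why $\tau$ comes out H\"older of exponent $\alpha$ rather than Lipschitz. So instead of Lemma~\ref{globinvpar} verbatim I would re-run its fixed-point argument with $\alpha$-H\"older maps: $w_g$ is the unique fixed point of $u\mapsto-\,u\circ(\id_E+v_g)$ wait — more precisely, from (\ref{enabiter}) applied with $A:=\id_E$, $v:=v_g$, one gets $w_g=-v_g\circ(\id_E+w_g)$, so $w_g$ is the fixed point of $H^{(g)}(u):=-v_g\circ(\id_E+u)$.

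Thus consider $H\colon\Omega\times BC(E,E)\to BC(E,E)$, $H(g,u):=-v_g\circ(\id_E+u)=-\sigma(g)\circ(\id_E+u)$, and check the hypotheses of Lemma~\ref{hoedepla} with exponent $\alpha$. Each $H(g,\cdot)$ is a contraction: $\|v_g\circ(\id_E+u_1)-v_g\circ(\id_E+u_2)\|_\infty\leq\Lip_\alpha(v_g)\,\|u_1-u_2\|_\infty^{\alpha}$ — but that is a H\"older-in-$u$ estimate, not a contraction in the metric sense. Here one must be a little careful: Lemma~\ref{hoedepla}(b) wants $f_x=H(g,\cdot)$ to be a genuine contraction for the metric $d_\infty$. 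Since $\|u_1-u_2\|_\infty$ need not be $\leq1$, the cleanest fix is to restrict the parameter set: replace $\Omega$ by $\Omega_0:=\{g\in\Omega: \|v_g\|_\infty\leq C\}$, observe $\|w_g\|_\infty\leq\|v_g\|_\infty$ by Lemma~\ref{globinv}, and work inside the complete metric space $Y_0:=\{u\in BC(E,E):\|u\|_\infty\leq C\}$ — wait, that still does not make $H(g,\cdot)$ a contraction. The honest route, which I would actually take, is to note that $H(g,\cdot)$ IS a contraction directly: $\|v_g\circ(\id_E+u_1)-v_g\circ(\id_E+u_2)\|_\infty\le\Lip(v_g)\|u_1-u_2\|_\infty$, and $\Lip(v_g)\le\|A^{-1}\|\cdot$(operator bounds)$\,<1$ — no; $v_g$ is merely $\alpha$-H\"older. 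So the correct statement is that $H(g,\cdot)$ is a contraction because, from Lemma~\ref{globinv} applied to $\id_E+v_g$, the map $\id_E+v_g$ is invertible with $\id_E+w_g$ its inverse and $w_g$ is \emph{Lipschitz} (indeed $\Lip(w_g)\le\Lip(v_g)/(1-\Lip(v_g))$ once $\Lip(v_g)<1$); thus after choosing $\delta$ small we ensure $\Lip(v_g)<1$ for all $g\in\Omega$ (this follows from $\Lip_\alpha(v_g)\le\ve$ together with $\|v_g\|_\infty\le2\delta$ and Lemma~\ref{desc} — no, Lemma~\ref{desc} lowers the exponent; I want to raise it, which fails in general).

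I will therefore organize $\tau$ as follows. Let $k_g:=(\id_E+v_g)^{-1}=\id_E+w_g$. For $g_1,g_2\in\Omega$, start from $\id_E=(\id_E+v_{g_1})\circ k_{g_1}=(\id_E+v_{g_2})\circ k_{g_2}$ to derive
\[
w_{g_1}-w_{g_2}=-\bigl(v_{g_1}\circ k_{g_1}-v_{g_2}\circ k_{g_2}\bigr)
=-\bigl(v_{g_1}-v_{g_2}\bigr)\circ k_{g_1}-\bigl(v_{g_2}\circ k_{g_1}-v_{g_2}\circ k_{g_2}\bigr),
\]
whence, using $\Lip_\alpha(v_{g_2})\le\ve$ on the last term and $k_{g_2}-k_{g_1}=w_{g_2}-w_{g_1}$,
\[
\|w_{g_1}-w_{g_2}\|_\infty\le\|v_{g_1}-v_{g_2}\|_\infty+\ve\,\|w_{g_1}-w_{g_2}\|_\infty^{\alpha}.
\]
Since $\|w_{g_i}\|_\infty\le2\delta$ the difference $\|w_{g_1}-w_{g_2}\|_\infty$ is bounded by $4\delta$, so on the range of interest $t:=\|w_{g_1}-w_{g_2}\|_\infty$ satisfies $t\le\|v_{g_1}-v_{g_2}\|_\infty+\ve t^{\alpha}$; with $\omega$ as in (\ref{thetau}) this reads $\|v_{g_1}-v_{g_2}\|_\infty\ge\omega(t)-2\ve t^{\alpha}\ge$ (after possibly shrinking $\ve$, or on the bounded range, absorbing constants) a constant times $t$ raised to... cleaner: $t\le\|v_{g_1}-v_{g_2}\|_\infty+\ve(4\delta)^{\alpha-1}t$, so for $\delta$ small enough that $\ve(4\delta)^{\alpha-1}<1$ — but $\alpha-1<0$, so $(4\delta)^{\alpha-1}\to\infty$ as $\delta\to0$, the wrong direction. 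Hence one cannot linearize; instead keep $t\le\|v_{g_1}-v_{g_2}\|_\infty+\ve t^{\alpha}$ and conclude, since the right side is concave increasing and $t$ is bounded by $4\delta$, that $t\le c\,\|v_{g_1}-v_{g_2}\|_\infty^{\alpha}$ for a constant $c=c(\ve,\delta,\alpha)$ — this is the elementary lemma that $t-\ve t^\alpha\le s$ with $0\le t\le M$ forces $t\le C(s+s^\alpha)\le C'\,s^\alpha$ when $s$ is bounded. Combined with the Lipschitz bound on $\sigma$ already established, $\|v_{g_1}-v_{g_2}\|_\infty\le\Lip(\sigma)\|g_1-g_2\|_\infty$, and Lemma~\ref{basecom}, this gives
\[
\|\tau(g_1)-\tau(g_2)\|_\infty=\|w_{g_1}-w_{g_2}\|_\infty\le c\,\Lip(\sigma)^{\alpha}\,\|g_1-g_2\|_\infty^{\alpha},
\]
i.e. $\tau$ is H\"older of exponent $\alpha$, as claimed.

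\textbf{Main obstacle.} The genuinely delicate point is the second paragraph's difficulty: $v_g$ is only $\alpha$-H\"older, not Lipschitz, in $x$, so the fixed-point operator $H^{(g)}(u)=-v_g\circ(\id_E+u)$ for $w_g$ is \emph{not} a contraction in $\|\cdot\|_\infty$, and Lemma~\ref{hoedepla} cannot be applied to it directly; instead one must derive the self-consistent inequality $\|w_{g_1}-w_{g_2}\|_\infty\le\|v_{g_1}-v_{g_2}\|_\infty+\ve\|w_{g_1}-w_{g_2}\|_\infty^{\alpha}$ by hand and then invoke the elementary fact that $t\le s+\ve t^{\alpha}$ with $t$ bounded implies $t\le C\,s^{\alpha}$ on a bounded range (using the boundedness $\|w_g\|_\infty\le2\delta$ from Lemma~\ref{globinv}). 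The Lipschitz statement for $\sigma$, by contrast, is a routine application of Lemma~\ref{hoedepla} to the parametrized contraction $\theta$ of Lemma~\ref{unqbdd}, since the $g$-dependence of $\theta$ enters only through the affine terms, giving a uniform Lipschitz constant $\max\{1,\|A_2^{-1}\|\}$ independent of the base point $u$.
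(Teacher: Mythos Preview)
Your treatment of $\sigma$ is correct and is exactly what the paper does: apply Lemma~\ref{hoedepla} to the parametrized contraction $\theta^{(g)}$ from Lemma~\ref{unqbdd} (with $h=0$), checking that the $g$-dependence is uniformly Lipschitz with constant $\max\{1,\|A_2^{-1}\|\}$.

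Your argument for $\tau$, however, has a genuine gap. The ``elementary lemma'' you invoke at the end---that $t\le s+\ve t^\alpha$ with $t$ bounded forces $t\le C\,s^\alpha$---is \emph{false}. The function $\phi(t)=t-\ve t^\alpha$ is negative on the interval $(0,\ve^{1/(1-\alpha)})$, so for any $t$ in that range the inequality $t\le s+\ve t^\alpha$ holds even with $s=0$; hence no bound of the form $t\le Cs^\alpha$ can be extracted. In your setting this means the self-referential estimate $\|w_{g_1}-w_{g_2}\|_\infty\le\|v_{g_1}-v_{g_2}\|_\infty+\ve\|w_{g_1}-w_{g_2}\|_\infty^\alpha$, which you derived from $w_g=-v_g\circ(\id_E+w_g)$, gives no control whatsoever on $\|w_{g_1}-w_{g_2}\|_\infty$ once it lies below $\ve^{1/(1-\alpha)}$. (You correctly diagnosed that $H^{(g)}(u)=-v_g\circ(\id_E+u)$ is not a contraction; the trouble is that your proposed workaround does not actually circumvent this.)

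The paper avoids the issue entirely by not factoring through $\sigma$. It observes that $w_g$ is itself the fixed point of the contraction $\theta$ from Lemma~\ref{unqbdd} with the roles reversed (take $g':=0$, $h':=g$ there, so the defining equation becomes $A\circ(\id_E+w)=(\id_E+w)\circ(A+g)$), restricts to the closed set $Y=\{u:\Lip_\alpha(u)\le\ve\}$ as in Lemma~\ref{gethoelder}, and applies Lemma~\ref{hoedepla} directly. The H\"older exponent $\alpha$ then arises because the $h'$-dependence of $\theta$ involves terms like $v_\obs\circ(A+h')^{-1}$, which for $v\in Y$ are only $\alpha$-H\"older in $h'$ (via Lemma~\ref{globinvpar}). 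Alternatively, your factoring idea \emph{can} be repaired with one change: use the other inverse identity $w_g\circ(\id_E+v_g)=-v_g$ (from $(\id_E+w_g)\circ(\id_E+v_g)=\id_E$) instead. Subtracting the relations for $g_1,g_2$ and using that $\id_E+v_{g_1}$ is surjective and $\Lip_\alpha(w_{g_2})\le\ve$ gives $\|w_{g_1}-w_{g_2}\|_\infty\le\|v_{g_1}-v_{g_2}\|_\infty+\ve\|v_{g_1}-v_{g_2}\|_\infty^\alpha$, which is no longer self-referential and yields the desired H\"older bound immediately.
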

\begin{proof}
Throughout the proof, we equip $BC(E,E)$ and $\Omega$ with the supremum metric $d_\infty$.
Moreover, we give $\Omega\times BC(E,E)$ the metric
$d$ defined via
$d((g_1,v_1),(g_2, v_2)):=\max\{d_\infty(g_1,g_2), d_\infty(v_1,v_2)\}$.
Given $g\in \Omega$, define $f(g,v):=\theta(v)=(\theta_1(v),\theta_2(v))$ for $v\in BC(E,E)$
as in (\ref{vers3}) and (\ref{vers4}) (applied with $h:=0$).
We claim that
\[
f\colon (\Omega \times BC(E,E),d)\to (BC(E,E),d_\infty)
\]
satisfies the hypotheses of the Lipschitz case of Lemma~\ref{hoedepla}.
If this is true, then the map
$\sigma\colon \Omega \to BC(E,E)$
taking $g\in \Omega$ to the fixed point $\sigma(g):=v_g$ of
$f_g:=f(g,.)\colon BC(E,E)\to BC(E,E)$
is Lipschitz.
To establish the claim, note first that condition (c)
of Lemma~\ref{hoedepla} is satisfied by completeness of $BC(E,E)$
(see Remark~\ref{userm}). Condition (b) is satisfied
since (\ref{reusend}) and (\ref{hyp2}) show that
\[
\Lip(f_g)\; \leq \; \max\big\{
\|A^{-1}_2\| (1 +\delta),
\|A_1\| +\delta\big\}\, ,
\]
where the right hand side is $<1$ and independent of $g\in \Omega$.
To see that the maps $f^v:=f(.,v)\colon \Omega\to BC(E,E)$,
for $v\in BC(E,E)$, are uniformly Lipschitz, note that
\[
f^v(g)-f^v(k)=\big(
(g_s-k_s)\circ (\id_E+v)\circ A^{-1},\, A_2^{-1}\circ (k_u-g_u)\circ (\id_E+v)\big)
\]
for $g,k\in BC(E,E)$ and thus
\begin{eqnarray*}
d_\infty(f^v(g),f^v(k)) &\leq& \max\big\{
\|g_s-k_s\|_\infty,\, \|A_2^{-1}\|\, \|k_u-g_u\|\big\}\\
&\leq &
\max\{
1,\,
\|A_2^{-1}\|\}\, d_\infty(k,g)\,.
\end{eqnarray*}
Hence $\Lip(f^v)\leq
\max\{
1,\,
\|A_2^{-1}\|\}$, for all $v\in BC(E,E)$.\\[3mm]
Now define $Y$ as in (\ref{defnY}).
For fixed $h\in \Omega$ and $g:=0$, let $\theta=(\theta_1,\theta_2)$ be as
in (\ref{vers3}) and (\ref{vers4}), and recall from the proof of Lemma~\ref{gethoelder}
that $\theta$ restricts to a contraction
$f_h:=\theta|_Y^Y$ of $Y$.
To see that $\tau$ is H\"older, we need only show that the map $f\colon \Omega\times Y\to Y$,
$f(h,x):= f_h(x)$ satisfies the hypotheses of
Lemma~\ref{hoedepla} (using the metric $d_\infty$ on $Y$ and $d$ on the left hand side).
By the proof of Lemma~\ref{gethoelder},
$Y$ is complete with respect to $d_\infty$.
Thus condition~(c) of Lemma~\ref{hoedepla}
is satisfied, and (b) can be shown as in the first part of this proof.
To verify~(a), let $v\in Y$. For $h,k\in\Omega$,
the first and second components of $f^v(h)-f^v(k)$
are given by
\begin{equation}\label{frstcom}
A_1\circ (v_s\circ(A+h)^{-1}-v_s\circ (A+k)^{-1})
+ k_s\circ  (A+k)^{-1}-h_s\circ (A+h)^{-1}\;\mbox{and}
\end{equation}
\begin{equation}\label{sccom}
A_2^{-1}\circ (h_u-k_u)+A_2^{-1}\circ (v_u\circ (A+h)^{-1}-v_u\circ (A+k)^{-1},
\end{equation}
respectively. The supremum norm of (\ref{frstcom}) is bounded by
\begin{eqnarray}
\lefteqn{\hspace*{-12mm}\|A_1\| \Lip_\alpha(v_s) \|(A+h)^{-1}-(A+k)^{-1}\|_\infty^\alpha
+ \|k_s-h_s\|_\infty\qquad\qquad}\notag \\
& & + \, \Lip_\alpha(h_s) \|(A+k)^{-1}-(A+h)^{-1}\|_\infty^\alpha,\label{intmstep}
\end{eqnarray}
where
$\Lip_\alpha(h_s)\leq \max\{\Lip(h_s),2\|h_s\|_\infty\}\leq 2\delta$ by Lemma~\ref{desc},
$\|k_s-h_s\|_\infty\leq \rho \|k_s-h_s\|_\infty^\alpha$ with $\rho:= \max\{1,2\delta\}$
and
\[
\|(A+k)^{-1}-(A+h)^{-1}\|_\infty\leq\frac{\|A^{-1}\|}{1-\delta\|A^{-1}\|}\|k-h\|_\infty
\]
by Lemma~\ref{globinvpar}.
Hence the following is an upper bound for (\ref{intmstep}):
\begin{equation}\label{upbou}
(\|A_1\| \ve +2\delta)\left(\frac{\|A^{-1}\|}{1-\delta\|A^{-1}\|}\right)^\alpha \|k-h\|_\infty^\alpha
+\rho\|k-h\|_\infty^\alpha\,.
\end{equation}
Likewise, the supremum norm of (\ref{sccom}) is bounded by
\begin{equation}\label{fila}
\|A_2^{-1}\|\rho\|h-k\|_\infty^\alpha
+\|A_2^{-1}\| \underbrace{\Lip_\alpha(v_u)}_{\leq\ve}
\left(\frac{\|A^{-1}\|}{1-\delta\|A^{-1}\|}\right)^\alpha \|k-h\|_\infty^\alpha\,.
\end{equation}
Taking now the maximum of the bounds provided by (\ref{upbou})
and (\ref{fila}), we see that $\|f^v(h)-f^v(k)\|_\infty\leq M\|h-k\|_\infty^\alpha$
for $h,k\in\Omega$,
with some constant $M$ independent of $v$, $h$, and $k$.
\end{proof}
\begin{numba}\label{LNew4}
Let $E$ be a Banach space over $\R$ (equipped with an absolute value $|.|$ equivalent to
the usual one) or an ultrametric field $(\K,|.|)$.
Let $A\colon E\to E$ be a hyperbolic automorphism,
$\|.\|$ be a norm on~$E$ adapted to~$A$
and $\alpha\in \,]0,1[$ as well as $\ve,\delta>0$ be such that
{\rm(\ref{lazy3})--(\ref{lazy5})} from Lemma~{\rm\ref{lazyla}}
are satisfied.
Let $\Omega$, $d_\infty$,
$\sigma\colon g\mto v_g$
and $\tau\colon g\mto w_g$
be as in Theorem~\ref{globparthm}.
If $\K=\R$,
fix a function $\eta$ as in~\ref{introeta}.
Let $r>0$ and
$\wt{\Omega}$ be the set
of all mappings $f\colon B^E_r(0)\to E$
which are strictly differentiable at $0$
with $f(0)=0$ and $f'(0)=A$,
and such that $R_f:=f-A$
is Lipschitz and satisfies the following condition:
\begin{itemize}
\item[(a)]
If $(\K,|.|)$ is ultrametric,
assume that $\Lip(R_f)\leq \delta$.
\item[(b)]
If $\K=\R$, assume that
$\Lip(R_f)\leq \frac{\delta}{1+3\Lip(\eta)}\leq \delta$.
\end{itemize}
The symbol $d_\infty$ will also be used for the supremum metric on~$\wt{\Omega}$.
If $(\K,|.|)$ is ultrametric, let $s:=r$.
If $\K=\R$, let $s=r/3$.
Then (\ref{condpr2}) and (\ref{condpr3}),
respectively,
are satisfied by $R_f$ (in place of $R$),
for all $f\in \wt{\Omega}$.
Define $g_f:=(R_f)_s$
as in (\ref{LNew1}) resp.\ (\ref{LNew2})
(cf.\ also \ref{LNew3}).
Define
\[
\wt{\sigma}(f):=\wt{v}_f:=\sigma(g_f)=v_{g_f}\quad\mbox{and}\quad
\wt{\tau}(f):=\wt{w}_f:=\tau(g_f)=w_{g_f}.
\]
Let $\omega$ be as in (\ref{thetau})
and define $O:=B^E_s(0)$,
$U:=B^E_{\omega^{-1}(s)}(0)$
and $W:=B^E_{\omega^{-1}(\omega^{-1}(s))}(0)$.
\end{numba}
\begin{prop}\label{pardepno2}
In the setting of {\rm\ref{LNew4}},
the map $H_f:=\id_E+\wt{v}_f\colon E\to E$
is a homeomorphism such that $H_f^{-1}=\id_E+\wt{w}_f$,
\begin{equation}\label{forsizes}
W \; \sub \; H_f(U)\; \sub \; O,
\end{equation}
\begin{equation}\label{LNew6}
f\circ H_f|_U=H_f\circ A|_U
\end{equation}
and $H_f(0)=0$.
Moreover,
$\wt{\sigma}\colon f\mto \wt{v}_f$ is Lipschitz as map from $(\wt{\Omega},d_\infty)$ to $(BC(E,E),d_\infty)$,
and $\wt{\tau}\colon (\wt{\Omega},d_\infty)\to(BC(E,E),d_\infty)$, $f\mto \wt{w}_f$
is H\"{o}lder of exponent $\alpha$.
If $\K$ is locally compact and $E$ finite-dimensional,
then also the maps $f\mto \wt{v}_f|_{B^E_t(0)}$
and $f\mto \wt{w}_f|_{B^E_t(0)}$
from $(\wt{\Omega},d_\infty)$ to $(BL_\beta(B^E_t(0),E),\|.\|_\beta)$
are continuous,
for all $\beta<\alpha$ and $t>0$.
\end{prop}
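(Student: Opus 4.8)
The plan is to verify the three families of assertions in turn, reducing everything to the results already established. The first block --- that $H_f$ is a homeomorphism with inverse $\id_E+\wt w_f$, that $H_f(0)=0$, and that the conjugacy identity \eqref{LNew6} holds --- is obtained exactly as in the proof of Theorem~B: by construction $g_f:=(R_f)_s$ satisfies $\Lip(g_f)\leq\delta$ and (by the argument around \eqref{getth}, using condition (a) or (b) of~\ref{LNew4}) $\|g_f\|_\infty\leq\delta$, so \eqref{lazlaz} holds with $g:=g_f$, $h:=0$; hence Lemmas~\ref{unqbdd}, \ref{bdhomeo} and \ref{gethoelder} apply, giving $\wt v_f,\wt w_f\in BL_\alpha(E,E)$ with $\Lip_\alpha(\wt v_f),\Lip_\alpha(\wt w_f)\leq\ve$, the homeomorphism property, $\wt v_f(0)=\wt w_f(0)=0$, and $(A+g_f)\circ(\id_E+\wt v_f)=(\id_E+\wt v_f)\circ A$. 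Since $R_f|_O=g_f|_O$ (because $O=B^E_s(0)$ and the cut-off $R_s$ agrees with $R$ there --- in the real case $\xi_s\equiv 1$ on $B^E_s(0)$ since $s\leq 3s$ and $\eta|_{[0,1]}=1$), the computation in~\ref{singl} yields $f\circ H_f|_U=H_f\circ A|_U$ once we know $H_f(U)\subseteq O$. The chain of inclusions \eqref{forsizes} is then the content of~\ref{lasttango}: the right inclusion $H_f(U)\subseteq O$ follows from $\|y+\wt v_f(y)\|\leq\|\wt v_f\|_\infty\cdot 0+\|y\|+\Lip_\alpha(\wt v_f)\|y\|^\alpha\leq\omega(\|y\|)\leq\omega(\omega^{-1}(s))=s$ for $y\in U$ (using $\wt v_f(0)=0$ so in fact $\|\wt v_f(y)\|\leq\Lip_\alpha(\wt v_f)\|y\|^\alpha$), and the left inclusion $W\subseteq H_f(U)$ follows by applying the same estimate to $\id_E+\wt w_f=H_f^{-1}$: for $z\in W=B^E_{\omega^{-1}(\omega^{-1}(s))}(0)$ one gets $\|z+\wt w_f(z)\|\leq\omega(\|z\|)\leq\omega^{-1}(s)$, so $H_f^{-1}(z)\in U$, i.e. $z\in H_f(U)$.

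Next, the Lipschitz/Hölder dependence of $\wt\sigma$ and $\wt\tau$. The point is that $\wt\sigma=\sigma\circ(f\mapsto g_f)$ and $\wt\tau=\tau\circ(f\mapsto g_f)$, and $\sigma,\tau$ have the required regularity by Theorem~\ref{globparthm}. So it suffices to show that the assignment $f\mapsto g_f=(R_f)_s$ is Lipschitz from $(\wt\Omega,d_\infty)$ to $(\Omega,d_\infty)$ --- then compose (a Lipschitz map after a Lipschitz map is Lipschitz; a Hölder map after a Lipschitz map is Hölder, by Lemma~\ref{basecom}). For $f_1,f_2\in\wt\Omega$ write $R_i:=R_{f_i}$. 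In the ultrametric case $g_{f_i}(y)=R_i(y)$ for $y\in B^E_s(0)$ and $0$ otherwise, so $\|g_{f_1}-g_{f_2}\|_\infty\leq\sup_{y\in B^E_s(0)}\|R_1(y)-R_2(y)\|=\|f_1-f_2\|_{\infty,B^E_s(0)}\leq d_\infty(f_1,f_2)$, i.e. this step is an isometry-type bound with constant $1$. In the real case $g_{f_i}=\xi_s\cdot R_i$ on $B^E_{3s}(0)$ with the \emph{same} cut-off $\xi_s$, so $g_{f_1}-g_{f_2}=\xi_s\cdot(R_1-R_2)$ there (and $0$ outside), giving $\|g_{f_1}-g_{f_2}\|_\infty\leq\|\xi_s\|_\infty\,\|f_1-f_2\|_{\infty,B^E_{3s}(0)}\leq d_\infty(f_1,f_2)$. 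Either way $f\mapsto g_f$ is $1$-Lipschitz, and the stated regularity of $\wt\sigma,\wt\tau$ follows.

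Finally, the continuity into the Hölder spaces $BL_\beta(B^E_t(0),E)$ for $\beta<\alpha$ and $t>0$, assuming $\K$ locally compact and $\dim E<\infty$. The restriction maps $f\mapsto\wt v_f|_{B^E_t(0)}$ and $f\mapsto\wt w_f|_{B^E_t(0)}$ land in $BL_\alpha(B^E_t(0),E)$ (by $\Lip_\alpha\leq\ve$) and are bounded there (by the uniform bound \eqref{unifestim} on $\|\wt v_f\|_\infty$, which is $\leq\max\{\dots\}/(1-\Lambda)$ independent of $f\in\wt\Omega$, together with $\Lip_\alpha\leq\ve$); their images thus lie in a bounded subset $B$ of $BL_\alpha(B^E_t(0),E)$. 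The ball $B^E_t(0)$ is totally bounded (finite dimension, local compactness), so one works with its closure $\overline{B^E_t(0)}$, a compact metric space, with $B^E_t(0)$ dense; in the ultrametric case the metric is ultrametric. By Lemma~\ref{easgtinhoel}, on such a bounded set $B$ the topologies induced by $BC(B^E_t(0),E)=(BC,d_\infty)$ and by $BL_\beta(B^E_t(0),E)=(\cdot,\|\cdot\|_\beta)$ \emph{coincide}. Since $f\mapsto\wt v_f$ and $f\mapsto\wt w_f$ are continuous from $(\wt\Omega,d_\infty)$ into $(BC(E,E),d_\infty)$ (the first Lipschitz, the second Hölder, as just shown), composing with the restriction $BC(E,E)\to BC(B^E_t(0),E)$ (which is $1$-Lipschitz for $d_\infty$) gives continuity into $(B,d_\infty)$, hence into $(B,\|\cdot\|_\beta)$, hence into $BL_\beta(B^E_t(0),E)$, as claimed.

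The main obstacle is the last paragraph: one must be careful that Lemma~\ref{easgtinhoel} is applicable, i.e. that the relevant set of restricted conjugacies really is a \emph{bounded} subset of $BL_\alpha$ (uniform $\|\cdot\|_\infty$-bound across all $f\in\wt\Omega$ via \eqref{unifestim}, uniform $\Lip_\alpha$-bound via $\ve$), and that in the ultrametric case the ambient metric on $\overline{B^E_t(0)}$ is ultrametric so that the hypothesis on $d$ in Lemma~\ref{easgtinhoel} (and in Lemma~\ref{compop}) is met. The earlier two blocks are essentially bookkeeping with the already-proven Theorem~B machinery and Theorem~\ref{globparthm}.
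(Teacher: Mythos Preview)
Your proof is correct and follows essentially the same route as the paper: you reduce everything to the Theorem~B construction for the first block, factor $\wt\sigma,\wt\tau$ through the $1$-Lipschitz map $f\mapsto g_f$ and invoke Theorem~\ref{globparthm} for the second, and apply Lemma~\ref{easgtinhoel} on a uniformly bounded set in $BL_\alpha$ for the third. The only cosmetic difference is that for the uniform $\|\cdot\|_\infty$-bound on the restrictions you cite~\eqref{unifestim}, whereas the paper uses the more direct estimate $\|\wt v_f(y)\|\leq \Lip_\alpha(\wt v_f)\|y\|^\alpha\leq \ve t^\alpha$ on $B^E_t(0)$ (from $\wt v_f(0)=0$); both work.
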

\begin{proof}
That $V:=H_f(U)\sub B^E_s(0)=O$
was verified in \ref{lasttango}.
Since $\Lip_\alpha(\wt{w}_f)\leq \ve$ (cf.\ (\ref{LNew8})) and $\wt{w}_f(0)=0$,
we have $(\id_E+ \wt{w}_f)(B^E_t(0))\sub B^E_{t+\ve t^\alpha}(0)=B^E_{\omega(t)}(0)$
and thus $B^E_t(0)\sub (\id_E+\wt{v}_f)(B^E_{\omega(t)}(0))$.
Choosing $t=\omega^{-1}(\omega^{-1}(s))$, we deduce that
$W \sub H_f(U)$ indeed.\\[2.3mm]
The map
$\Gamma\colon (\wt{\Omega},d_\infty)\to (\Omega,d_\infty)$
is Lipschitz with $\Lip(\Gamma)\leq 1$.
In fact, if $f_1,f_2\in \wt{\Omega}$ and $x\in E$, then
$\|(\Gamma(f_1)-\Gamma(f_2))(x)\|$ equals $0$ or $\|f_1(x)-f_2(x)\|$
or $\xi_s(x)\|f_1(x)-f_2(x)\|$ (with $\xi_s$ as in \ref{introeta}), and hence is bounded by $\|f_1-f_2\|_\infty$
in either case.
Thus
$\wt{\sigma}=\sigma\circ \Gamma$ is Lipschitz
and
$\wt{\tau}=\tau\circ \Gamma$
is H\"{o}lder of exponent~$\alpha$,
by Theorem~\ref{globparthm}
and Lemma~\ref{basecom}.\\[2.4mm]
If $\K$ is locally compact and~$E$ is finite-dimensional,
given $t<0$
let $\cB$ be set of all $u\in BL_\alpha(B^E_t(0),E)$ such that
$\Lip_\alpha(u)\leq \ve$
and $\|u\|_\infty\leq \ve t^\alpha$.
Then $\wt{v}_f|_{B^E_t(0)}\in \cB$ and $\wt{w}_f|_{B^E_t(0)}\in \cB$ for all $f\in \wt{\Omega}$,
as a consequence of~(\ref{LNew8}).
Pick $\beta<\alpha$.
Since $\cB\sub BL_\alpha(B^E_t(0),E)$ is bounded,
$BC(B^E_t(0),E)$ and $BL_\beta(B^E_t(0),E)$ induce
the same topology on~$\cB$, by Lemma~\ref{easgtinhoel}.
Since $f\mto \wt{v}_f$ and $f\mto\wt{w}_f$ are continuous
as maps to $BC(B^E_t(0),E)$ (by the preceding)
and have image in~$\cB$,
we deduce that they are continuous also
as maps to $BL_\beta(B^E_t(0),E)$.
\end{proof}
\begin{numba}\label{nwnumbaba}
Let $(E,\|.\|)$ be a finite-dimensional Banach space over
a locally compact valued field
$(\K,|.|)$, and $U\sub E$ be an open subset.
Recall from \cite[Lemma~3.11]{FIO}
that a function $f\colon U\to E$ is strictly differentiable at each point
if and only if $f$ is $C^1$ in the sense of~\cite{Ber},
i.e., there exists a continuous map $f^{[1]}\colon U^{[1]}\to E$
on the open subset $U^{[1]}:=\{(x,y,t)\in U\times E\times\K\colon x+ty\in U\}$
of $U\times E\times\K$
such that $f^{[1]}(x,y,t)=\frac{1}{t}(f(x+ty)-f(x))$ for all $(x,y,t)\in U^{[1]}$ with $t\not= 0$.
We endow the space $C^1(U,E)$ with the compact-open $C^1$-topology~$\cO_{C_1}$,
i.e., the initial topology with respect to the inclusion map $C^1(U,E)\to C(U,E)_{c.o.}$
and the map $C^1(U,E)\to C(U^{[1]},E)_{c.o.}$, $f\mto f^{[1]}$,
where the spaces on the right-hand side are equipped
with the compact-open topology (see \cite{ZOO}
for further information).
\end{numba}
The proof of the next lemma can be found in Appendix~\ref{nowappD}.
\begin{la}\label{nwnewla}
In {\rm\ref{nwnumbaba}},
let $K\sub U$ be a relatively compact subset.
Then $f|_K\colon K\to E$
is Lipschitz for each $f\in C^1(U,E)$
and $C^1(U,E)
\to [0,\infty[$, $f\mto\Lip(f|_K)$
is a continuous seminorm on $(C^1(U,E),\cO_{C_1})$.
\end{la}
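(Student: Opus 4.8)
The plan is to prove the two assertions of Lemma~\ref{nwnewla} in turn: first that $f|_K$ is Lipschitz, then that $f\mapsto \Lip(f|_K)$ is a continuous seminorm on $(C^1(U,E),\cO_{C_1})$. For the first part, since $K$ is relatively compact, pick a bounded open set $V$ with $K\sub V$ and $\wb V\sub U$, and choose $\rho>0$ so small that $\wb{B}^E_\rho(x)\sub U$ for every $x\in\wb V$ (possible by compactness of $\wb V$ and openness of $U$; here one uses that $E$ is a normed space over a valued field, together with local compactness to invoke the covering argument). By shrinking $V$ if necessary, we may also assume that $K$ is contained in the ball of radius $\rho$ around each of its points, i.e.\ that $\wb V$ has diameter $<\rho$ --- or, more robustly, cover $\wb V$ by finitely many balls of radius $\rho/2$ and argue on each ball, using that $\wb V$ is connected only if needed, or simply reduce to the case where $V$ is a single small ball and then patch. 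Concretely: for $x,y\in K$ with $\|x-y\|<\rho$, write $x-y=ty_0$ with $t\in\K$, $\|ty_0\|=\|x-y\|$ and $\|y_0\|$ of controlled size (using non-discreteness of $|.|$ to choose $t$ with $\|x-y\|\le |t|\,\|y_0\|$ comparable), so that $(y,y_0,s)\in U^{[1]}$ for all $s$ between $0$ and $t$ in the obvious sense, and $f(x)-f(y)=t\,f^{[1]}(y,y_0,t)$. Since $f^{[1]}$ is continuous on $U^{[1]}$, it is bounded on the compact set $\{(y,y_0,t):\,\ldots\}$ carved out by these constraints, giving $\|f(x)-f(y)\|\le C\,|t| = C\|x-y\|$ on pairs at distance $<\rho$; pairs at distance $\ge\rho$ are handled by boundedness of $f$ on $\wb V$ (itself a consequence of continuity of $f$ and compactness), so $f|_K$ is Lipschitz overall.

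For the second part, I would extract from the above argument an \emph{explicit formula} for a bound on $\Lip(f|_K)$ in terms of a sup of $\|f^{[1]}\|$ over a fixed compact subset $L\sub U^{[1]}$ (independent of $f$) and a sup of $\|f\|$ over the fixed compact $\wb V\sub U$. Precisely, there is a compact $L\sub U^{[1]}$ and a compact $\wb V\sub U$, depending only on $K$, $U$ and the chosen $\rho$, such that
\[
\Lip(f|_K)\;\le\; C_1\,\sup_{L}\|f^{[1]}\|\;+\;C_2\,\sup_{\wb V}\|f\|
\]
for every $f\in C^1(U,E)$, with $C_1,C_2$ absolute constants. The right-hand side is, by definition of the compact-open $C^1$-topology $\cO_{C_1}$ (initial with respect to $f\mapsto f\in C(U,E)_{c.o.}$ and $f\mapsto f^{[1]}\in C(U^{[1]},E)_{c.o.}$), a continuous seminorm on $(C^1(U,E),\cO_{C_1})$: each of $f\mapsto\sup_L\|f^{[1]}\|$ and $f\mapsto\sup_{\wb V}\|f\|$ is one of the standard seminorms generating the compact-open topologies on the respective factors, pulled back. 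That $f\mapsto\Lip(f|_K)$ is itself a seminorm (subadditivity, absolute homogeneity over $\K$) is immediate from the definition of the Lipschitz constant and linearity of the construction $f\mapsto f^{[1]}$ in $f$; being dominated by a continuous seminorm, it is continuous.

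The main obstacle is the first part --- producing the Lipschitz estimate with a bound of the stated pullback form --- and within it, the non-archimedean bookkeeping of writing $x-y=ty_0$ with good control of $|t|$ and $\|y_0\|$ so that the whole segment $\{y+sy_0\}$ (or the relevant $(y,y_0,s)$) stays inside a \emph{fixed} compact subset of $U^{[1]}$, uniformly over $x,y\in K$. In the ultrametric case this is clean (balls are "flat", and one can take $|t|=\|x-y\|$ with $\|y_0\|=1$ on the unit sphere, staying in a genuinely compact tube by local compactness); in the archimedean real case one instead uses a mean-value / integral representation along the straight segment, which is the familiar $C^1$ argument, and here $f^{[1]}(x,y,t)$ plays the role of the difference quotient with $f^{[1]}(x,y,0)=f'(x)y$. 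Either way, once the fixed compact sets $L$ and $\wb V$ are pinned down by $K,U,\rho$ alone, continuity of the seminorm is a formality. A minor subtlety, easily dispatched, is the reduction to the case of pairs $x,y$ at distance $<\rho$: split $K$ into finitely many pieces of small diameter, bound the Lipschitz constant on each piece by the above, and bound the "long-range" part using $\sup_{\wb V}\|f\|$ and the fact that the finitely many pieces are separated from each other by a positive distance.
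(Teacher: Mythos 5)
Your proposal is correct and follows essentially the same route as the paper's proof: write $f(y)-f(x)=t\,f^{[1]}(x,t^{-1}(y-x),t)$ with $t$ chosen (via the value group of $|.|$) so that $|t|$ is comparable to $\|y-x\|$, observe that the resulting triples lie in a fixed compact subset $L\sub U^{[1]}$ (using local compactness of $\K$ and $\dim E<\infty$), and deduce continuity of the seminorm $f\mto\Lip(f|_K)$ from the resulting bound by $\sup_L\|f^{[1]}\|$ -- the paper phrases this as ``the unit ball of the seminorm is a $0$-neighbourhood'' rather than via an explicit dominating seminorm, but that is only packaging. The detours in your argument (keeping the whole segment $y+sy_0$ inside $U$, the mean-value hedge in the real case, and the long-range/short-range split with the extra term $\sup_{\wb V}\|f\|$) are all unnecessary: $U^{[1]}$ constrains only the single point $x+ty$, so the triple $(x,t^{-1}(y-x),t)$ lies in the fixed compact set for \emph{every} pair $x\not=y$ in $K$ (the paper's $L$ requires only $x+tz\in K$), and the identity $f(x+ty)-f(x)=t\,f^{[1]}(x,y,t)$ holds exactly for all $t\not=0$ by the very definition of $f^{[1]}$, in the real and ultrametric cases alike.
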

\begin{numba}\label{LNew10}
Assume that the valued field $(\K,|.|)$ is locally compact
and $\K\not\isom \C$.
Let $E$ be a finite-dimensional Banach space over~$\K$
and $P\sub E$ be an open $0$-neighbourhood.
We give
\[
C^1_{**}(P,E):=\{g\in C^1(P,E)\colon \mbox{$g(0)=0$ and $g'(0)=0$}\,\}
\]
the topology $\cO_{C_1}$ induced by $C^1(P,E)$.
Pick $r>0$ such that the compact closure of $K:=B^E_r(0)$ is contained in~$P$.
Then $d_K\colon C^1_{**}(P,E)^2
\to [0,\infty[$, $d_K(g,h):=\|f|_K-g|_K\|_\infty$
is a continuous pseudometric
on $(C^1_{**}(P,E),\cO_{C^1})$.
H\"older and Lipschitz maps between pseudometric spaces
are defined as in the case of metric spaces (recalled in \ref{defbaselip}).
Let $A$, $\|.\|$, $\alpha$, $\ve$, $\delta$, $\wt{\Omega}$,
$\wt{\sigma}$ and $\wt{\tau}$ be as in~\ref{LNew4}.
\end{numba}
\begin{prop}\label{pardepno3}
In the situation of {\rm\ref{LNew10}}, the set
\[
\Omega:=\{g\in C^1_{**}(P,E)\colon A+ g|_{B^E_r(0)}\in \wt{\Omega}\}
\]
is a $0$-neighbourhood in $(C^1_{**}(P,E),\cO_{C_1})$.
The map $\Lambda \colon
(\Omega, d_K) \to(\wt{\Omega},d_\infty)$, $g\mto A+ g|_{B^E_r(0)}$
is Lipschitz with $\Lip(\Lambda)\leq 1$.
The assignment\linebreak
$g\mto \wt{\sigma}(A+g|_{B^E_r(0)})$
defines a Lipschitz map from $(\Omega,d_K)$ to $(BC(E,E),d_\infty)$.
The assigment
$g\mto \wt{\tau}(A+g|_{B^E_r(0)})$
is H\"{o}lder of exponent~$\alpha$ as a mapping
$(\Omega,d_K)\to(BC(E,E),d_\infty)$.
Moreover, the maps
$g\mto \wt{\sigma}(A+g|_{B^E_r(0)})|_{B^E_t(0)}$
and
$g\mto \wt{\tau}(A+g|_{B^E_r(0)})|_{B^E_t(0)}$
are continuous from $(\Omega,d_K)$ to $(BL_\beta(B^E_t(0),E),\|.\|_\beta)$,
for all $t>0$ and $\beta<\alpha$.
\end{prop}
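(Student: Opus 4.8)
The strategy is to verify each assertion by reducing it, via the composite factorizations already built, to results proved earlier in the paper. The key observation is that $\Lambda$ is essentially a restriction-and-affine-shift operator, so all continuity/Lipschitz/Hölder claims about $g\mapsto\widetilde\sigma(A+g|_{B^E_r(0)})$ and $g\mapsto\widetilde\tau(A+g|_{B^E_r(0)})$ will follow by composing $\Lambda$ with the maps $\widetilde\sigma$, $\widetilde\tau$ and the restriction maps whose good behaviour is furnished by Proposition~\ref{pardepno2}, together with Lemma~\ref{basecom} for the Hölder composition estimate.

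\medskip

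First I would check that $\Omega$ is a $0$-neighbourhood in $(C^1_{**}(P,E),\cO_{C_1})$. By definition of $\widetilde\Omega$ in \ref{LNew4}, membership $A+g|_{B^E_r(0)}\in\widetilde\Omega$ requires exactly that $R_f=g|_{B^E_r(0)}$ be Lipschitz with $\Lip(g|_{B^E_r(0)})\leq\delta$ in the ultrametric case, or $\leq\delta/(1+3\Lip(\eta))$ in the real case (note $g(0)=0$ and $(A+g)'(0)=A$ are automatic since $g\in C^1_{**}$). By Lemma~\ref{nwnewla}, the map $g\mapsto\Lip(g|_K)$ is a continuous seminorm on $(C^1(P,E),\cO_{C_1})$ with $K=B^E_r(0)$ (whose closure lies in $P$, possibly after shrinking $r$); hence the sublevel set where this seminorm is $\leq\delta$ (resp.\ $\leq\delta/(1+3\Lip(\eta))$) is a $0$-neighbourhood, and this sublevel set is precisely $\Omega$. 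The hypothesis $\K\not\cong\C$ is what lets the cut-off constructions of \ref{introeta} and \ref{chos1}/\ref{chos2} proceed, so that $\widetilde\Omega$ (and hence $\Omega$) is non-empty and the earlier machinery applies.

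\medskip

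Next, the claim $\Lip(\Lambda)\leq 1$ from $(\Omega,d_K)$ to $(\widetilde\Omega,d_\infty)$ is immediate: for $g,h\in\Omega$ one has $\|\Lambda(g)-\Lambda(h)\|_\infty=\sup_{x\in B^E_r(0)}\|g(x)-h(x)\|=d_K(g,h)$ by the choice $K=B^E_r(0)$, so in fact $\Lambda$ is an isometry onto its image. Now $g\mapsto\widetilde\sigma(A+g|_{B^E_r(0)})=\widetilde\sigma\circ\Lambda$ is Lipschitz as a composite of the Lipschitz map $\widetilde\sigma\colon(\widetilde\Omega,d_\infty)\to(BC(E,E),d_\infty)$ (Proposition~\ref{pardepno2}) with the Lipschitz map $\Lambda$; and $g\mapsto\widetilde\tau(A+g|_{B^E_r(0)})=\widetilde\tau\circ\Lambda$ is Hölder of exponent $\alpha$, being the composite of the $\alpha$-Hölder map $\widetilde\tau$ with the Lipschitz (hence $\alpha$-Hölder of any constant $\geq\Lip(\Lambda)^\alpha$ over bounded inputs — or directly by Lemma~\ref{basecom} with $\beta=\alpha$, $\alpha_{\text{inner}}=1$) map $\Lambda$. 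For the final two assertions, compose further with the restriction maps $f\mapsto\widetilde v_f|_{B^E_t(0)}$ and $f\mapsto\widetilde w_f|_{B^E_t(0)}$, which Proposition~\ref{pardepno2} shows to be continuous from $(\widetilde\Omega,d_\infty)$ into $(BL_\beta(B^E_t(0),E),\|.\|_\beta)$ for every $\beta<\alpha$ when $\K$ is locally compact and $E$ finite-dimensional — hypotheses in force throughout \ref{LNew10}. Precomposing with the continuous (indeed Lipschitz) map $\Lambda$ yields the desired continuity of $g\mapsto\widetilde\sigma(A+g|_{B^E_r(0)})|_{B^E_t(0)}$ and $g\mapsto\widetilde\tau(A+g|_{B^E_r(0)})|_{B^E_t(0)}$.

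\medskip

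The only genuinely non-formal step is the first one — showing $\Omega$ is a $0$-neighbourhood — and even that is a direct application of Lemma~\ref{nwnewla}; the rest is bookkeeping with composition of Lipschitz and Hölder maps. I anticipate the main point requiring care is verifying that $d_K$ is indeed a continuous pseudometric on $(C^1_{**}(P,E),\cO_{C_1})$ and that the isometry identity $\|\Lambda(g)-\Lambda(h)\|_\infty=d_K(g,h)$ holds (this uses that $g,h$ are only compared on $K=B^E_r(0)$, which is exactly the domain of the restricted maps), together with the routine check that the affine shift by the fixed operator $A$ and the cut-off built into $g_f=(R_f)_s$ do not disturb any of the estimates — all of which is already encapsulated in \ref{LNew4} and Proposition~\ref{pardepno2}.
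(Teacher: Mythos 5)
Your proposal is correct and follows essentially the same route as the paper: Lemma~\ref{nwnewla} identifies $\Omega$ as a sublevel set of a continuous seminorm, the isometry identity $d_\infty(\Lambda(g),\Lambda(h))=d_K(g,h)$ gives $\Lip(\Lambda)\leq 1$, and the remaining assertions follow by composing $\Lambda$ with the maps from Proposition~\ref{pardepno2} via Lemma~\ref{basecom}.
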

\begin{proof}
If $\K$ is ultrametric, let $\rho:=\delta$.
If $\K=\R$, let $\rho:=\frac{\delta}{1+3\Lip(\eta)}$
(as in \ref{LNew4}).
By Lemma~\ref{nwnewla}, the map $C^1_{**}(P,E)\to[0,\infty[$, $g\mto\Lip(g|_K)$
is a continuous seminorm. Since $\Omega=\{g\in C^1_{**}(P,E)\colon \Lip(g|_K)\leq\rho\}$,
we deduce that $\Omega$ is a $0$-neighbourhood.
We have $\Lambda(\Omega)\sub \wt{\Omega}$ by definition of~$\Omega$,
and the formula $d_\infty(\Lambda(g),\Lambda(h))=\|g|_K-h|_K\|_\infty=d_K(g,h)$
for $g,h\in \Omega$ entails that $\Lip(\Lambda)\leq 1$.
In view of Lemma~\ref{basecom}, the remaining assertions
now follow immediately from Proposition~\ref{pardepno2}.
\end{proof}
\appendix
\section{Existence of ultrametric adapted norms}\label{fiappe}
\begin{la}
Let $(E,\|.\|)$ be an ultrametric Banach space
over a valued field $(\K,|.|)$, and $A\colon E\to E$ be a hyperbolic
automorphism. Then there exists an ultrametric norm
$\|.\|\wt{\;}$ on $E$ adapted to $E$.
\end{la}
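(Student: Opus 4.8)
The goal is to produce an ultrametric norm $\|.\|\wt{\;}$ on $E$, adapted to the hyperbolic automorphism $A$, starting from the data of hyperbolicity: an $A$-invariant splitting $E=E_{\obs}\oplus E_{\obu}$, together with \emph{some} adapted norm $\|.\|$ (which we do not yet know to be ultrametric) satisfying (\ref{hypb1}) and (\ref{hypb2}). Since the paper's very definition of hyperbolicity only guarantees an adapted norm --- not an ultrametric one --- the content of the lemma is precisely to upgrade it. The plan is to construct the new norm separately on each of the two summands, using the contraction/expansion provided by (\ref{hypb2}), and then glue by taking a maximum.

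First I would work on the stable part. Since $\|A_1\|=\|A|_{E_{\obs}}\|=:a<1$, for each $x\in E_{\obs}$ I would set
\[
\|x\|\wt{\;}_{\obs}\;:=\;\sup_{n\geq 0}\,\frac{\|A^n x\|}{\mu^n}
\]
for a fixed constant $\mu\in\;]a,1[$ (for instance $\mu=\sqrt{a}$; one needs $\mu<1$ so that the new norm dominates the old one up to a constant and hence is equivalent to it, and $\mu>a$ so the sup is finite and $\|A_1\|\wt{\;}\leq\mu<1$). The supremum of a family of seminorms each satisfying the ultrametric inequality again satisfies it, so $\|.\|\wt{\;}_{\obs}$ is ultrametric; it is finite and bounded below by $\|.\|$ on $E_{\obs}$, hence a genuine norm defining the subspace topology; and by construction $\|A_1 x\|\wt{\;}_{\obs}\leq\mu\,\|x\|\wt{\;}_{\obs}$, so $\|A_1\|\wt{\;}\leq\mu<1$. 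Dually, on $E_{\obu}$ I would use $A_2^{-1}$ in place of $A_1$: with $b:=\|A^{-1}|_{E_{\obu}}\|<1$ and $\nu\in\;]b,1[$, set $\|y\|\wt{\;}_{\obu}:=\sup_{n\geq 0}\|A^{-n}y\|/\nu^n$, which is ultrametric, equivalent to $\|.\|$ on $E_{\obu}$, and satisfies $\|A_2^{-1}\|\wt{\;}\leq\nu<1$.

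Finally I would define $\|v\|\wt{\;}:=\max\{\|v_{\obs}\|\wt{\;}_{\obs},\,\|v_{\obu}\|\wt{\;}_{\obu}\}$ for $v=v_{\obs}+v_{\obu}$ with $v_{\obs}\in E_{\obs}$, $v_{\obu}\in E_{\obu}$. A maximum of two ultrametric norms on the summands of a direct sum is itself ultrametric, it defines the product topology (which is the topology of $E$ since the projections are continuous, the splitting being by closed subspaces --- here one uses that $A$-invariance plus the original adapted-norm estimate (\ref{hypb1}) already exhibit the $\pi_{\obs},\pi_{\obu}$ as bounded), and it manifestly satisfies (\ref{hypb1}): $\|v_{\obs}+v_{\obu}\|\wt{\;}=\max\{\|v_{\obs}\|\wt{\;},\|v_{\obu}\|\wt{\;}\}$. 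The operator-norm conditions (\ref{hypb2}) for the new norm follow from the two estimates in the previous paragraph, since $\|.\|\wt{\;}$ restricts to $\|.\|\wt{\;}_{\obs}$ on $E_{\obs}$ and to $\|.\|\wt{\;}_{\obu}$ on $E_{\obu}$, and both subspaces are $A$-invariant. Hence $\|.\|\wt{\;}$ is an ultrametric norm adapted to $A$.

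The only genuinely delicate point --- and the step I expect to need the most care --- is verifying that $\|.\|\wt{\;}_{\obs}$ and $\|.\|\wt{\;}_{\obu}$ are finite-valued and equivalent to the restrictions of the original norm, i.e. that the defining suprema do not blow up. This is where the precise choice $\mu>\|A_1\|$ (resp. $\nu>\|A_2^{-1}\|$) is used: one has $\|A^n x\|\leq\|A_1\|^n\|x\|$, so $\|A^n x\|/\mu^n\leq(\|A_1\|/\mu)^n\|x\|\leq\|x\|$, giving $\|x\|\leq\|x\|\wt{\;}_{\obs}\leq\|x\|$ in fact $\|x\|\wt{\;}_{\obs}\leq\|x\|$ with the lower bound coming from the $n=0$ term; so actually $\|.\|\wt{\;}_{\obs}=\|.\|$ on $E_{\obs}$ here, and one should double-check whether the original $\|.\|$ restricted to $E_{\obs}$ was already ultrametric (it need not be, since (\ref{hypb1}) only controls cross terms) --- if not, the sup over $n$ genuinely changes the norm and one must re-examine the bound, replacing the crude operator-norm estimate by $\|A^n x\|\leq\|A_1^n\|\,\|x\|$ and using $\limsup_n\|A_1^n\|^{1/n}\leq\|A_1\|<\mu$ to get $\|A^n x\|/\mu^n$ bounded and tending to $0$. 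That spectral-radius argument is the crux; everything else is the routine verification that suprema and maxima preserve the ultrametric inequality.
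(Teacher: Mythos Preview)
Your plan has a genuine gap at the key step. You define $\|x\|\wt{\;}_{\obs}=\sup_{n\geq 0}\|A^n x\|/\mu^n$ using what you call $\|.\|$, the \emph{adapted} norm, and then argue that this supremum is ultrametric because it is a supremum of ultrametric seminorms. But you yourself note that the adapted norm need not be ultrametric on $E_{\obs}$ (condition~(\ref{hypb1}) only controls cross terms). Hence the individual seminorms $x\mapsto\|A^n x\|/\mu^n$ need not satisfy the strong triangle inequality, and neither need their supremum. Your later remark about ``re-examining the bound'' via a spectral-radius estimate addresses only finiteness and equivalence, not the ultrametric property --- which is the entire content of the lemma. (Nor can you reinterpret $\|.\|$ as the given ultrametric norm and keep the rest of the argument intact: then $a=\|A_1\|$ is the operator norm in the ultrametric norm and there is no reason for $a<1$, so the interval $]a,1[$ for $\mu$ may be empty.)

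The fix --- and this is exactly what the paper does --- is to build the new norm out of the \emph{given} ultrametric norm on $E$ (the one in the hypothesis ``$(E,\|.\|)$ is an ultrametric Banach space''), while using the adapted norm only to supply the contraction rate via norm equivalence. Writing $\|.\|'$ for the adapted norm, $\theta:=\|A_1\|'<1$, and $C\geq 1$ with $C^{-1}\|.\|'\leq\|.\|\leq C\|.\|'$, the paper sets, for a suitably large integer~$n$,
\[
\|x\|\wt{\;}\;:=\;\max\big\{\theta^{-k/(n-1)}\|A^k x\|:\ k=0,\ldots,n-1\big\},
\]
a finite maximum of genuinely ultrametric seminorms. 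The contraction estimate for $A$ in the new norm comes from the shift $k\mapsto k+1$ together with $\|A^n\|\leq C^2\theta^n$, and choosing $n$ large makes $C^2\theta^{n-1}<1$. Your infinite-sup variant can be repaired the same way: take $\sup_{k\geq 0}\|A^k x\|/\mu^k$ with the \emph{ultrametric} $\|.\|$ and any $\mu\in\;]\theta,1[$; finiteness and $\|A_1\|\wt{\;}\leq\mu$ then follow from $\|A^k x\|\leq C^2\theta^k\|x\|$. The essential missing ingredient in your write-up is keeping the two norms straight and using each for its proper role.
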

\begin{proof}
We first assume that $E=E_\obs$; without loss of generality $E\not=\{0\}$.
Let $\|.\|'$ be a (not necessarily ultrametric) norm on $E$ adapted to $A$.
Since the norms $\|.\|$ and $\|.\|'$ are equivalent, there exists $C\geq 1$ such that
$C^{-1}\|.\|'\leq \|.\|\leq C\|.\|'$. Let $\theta:=\|A\|'<1$ be the operator norm of $A$ with respect to
$\|.\|'$.
Choose an integer $n\geq 2$ so large that $\sigma:=C^2\theta^{n-1}<1$
and define an ultrametric norm $\|.\|\wt{\;}$ on $E$ equivalent to $\|.\|$ via
\[
\|x\|\wt{\;}\; :=\; \max\{\theta^{-\frac{k}{n-1}}\|A^kx\|\colon k=0,\ldots, n-1\}\,.
\]
The operator norm $\|A^n\|$ of $A^n$ with respect to $\|.\|$
satisfies $\|A^n\|\leq C^2 \|A^n\|'\leq C^2(\|A\|')^n=C^2\theta^n$.
To see that $\|.\|\wt{\;}$ is adapted, let $x\in E$.
Then $\|Ax\|\wt{\;}$ is the maximum of
$\max\{\theta^{\frac{1-k}{n-1}}\|A^kx\|\colon k=1,\ldots, n-1\}\leq \theta^{\frac{1}{n-1}}\|x\|\wt{\;}$
and $
\theta^{-\frac{n-1}{n-1}}\|A^nx\|\leq C^2\theta^{n-1}\|x\|\leq C^2\theta^{n-1}\|x\|\wt{\;}$.
By the preceding,
the operator norm $\|A\|\wt{\;}$ of $A$ with respect to $\|.\|\wt{\;}$
satisfies
\[
\|A\|\wt{\;}\; \leq\; \max\{\theta^{\frac{1}{n-1}},\sigma\}\;<\; 1\,.
\]
Hence $\|.\|\wt{\;}$ is an adapted norm on $E=E_\obs$.\\[5mm]
In a general case, $E=E_\obs\oplus E_{\obu}$,
the preceding arguments provide ultrametric norms
$\|.\|_1$ on $E_\obs$ adapted to $A|_{E_\obs}$ and
$\|.\|_2$ on $E_\obu$ adapted to $A^{-1}|_{E_\obu}$.
Then $\|x+y\|\wt{\;}:=\max\{\|x\|_1,\|y\|_2\}$ for $x\in E_\obs$,
$y\in E_\obu$ defines an ultrametric norm on $E$ adapted to $A$.
\end{proof}
\section{Proof of Lemma~\ref{hoedepla}}\label{nowaB}
If also $z_x$ is a fixed point of $f_x$,
then $d_Y(y_x,z_x)=d_Y(f_x(y_x),f_x(z_x))\leq \lambda\, d_Y(y_x,z_x)$,
whence $d_Y(y_x,z_x)=0$ and hence $z_x=y_x$.
For $v\in X$ and $y\in Y$, we have $f_v^n(y)\to y_v$ as $n\to\infty$
since $d_Y(f_v^n(y),y_v)= d_Y(f_v^n(y),f^n_v(y_v))\leq \lambda^nd_Y(y,y_v)$.
In particular, $f_v^n(y_w)\to y_v$
for each $w\in X$. We claim:
\[
d_Y(f_v^n(y_w),y_w)
\;\leq 
\mu \, d_X(v,w)^\alpha
\sum_{k=0}^{n-1}\lambda^k
\quad \mbox{for all $n\in \N$.}
\]
If this is true, letting $n\to\infty$ we deduce that
\[
d_Y(y_v,y_w)\;\leq\;
\mu \, d_X(w,v)^\alpha
\sum_{k=0}^\infty \lambda^k
= \frac{\mu}{1-\lambda} \, d_X(w,v)^\alpha\,,
\]
as required. If $n=1$, we have
$d_Y(f_v(y_w),y_w)=d_Y(f_v(y_w),f_w(y_w))
=d_Y(f^{y_w}(v),f^{y_w}(w))
\leq
\mu \, d_X(v,w)^\alpha$,
verifying the claim in this case.
Assuming that the claim is true for some $n$,
we obtain
\begin{eqnarray*}
d_Y(f_v^{n+1}(y_w),y_w)
&=& d_Y(f_v^{n+1}(y_w),f_w(y_w))\\
&\leq &
d_Y(f_v(f_v^n(y_w)),f_v(y_w))
+
d_Y(f_v(y_w),f_w(y_w))\\
&\leq&
\lambda  \, d_Y(f_v^n(y_w),y_w)
+\mu \, d_X(v,w)^\alpha\\
&\leq&
\lambda
\mu \, d_X(v,w)^\alpha
\sum_{k=0}^{n-1}\lambda^k
+\mu \, d_X(v,w)^\alpha\\
&=&
\mu \, d_X(v,w)^\alpha \sum_{k=0}^n\lambda^k\,,
\end{eqnarray*}
as required. This induction proves the claim.
\section{Proof of Lemma~\ref{compop}}\label{appnowC}
The first assertion is covered by Lemma~\ref{desc}.
Now assume that $\K$ is locally compact (whence $\K$ is $\R$ or $\C$ as a topological
field in the archimedean case),
and assume that~$E$ is finite-dimensional. Then $E\isom \K^n$
(equipped with product topology) for some $n\in \N_0$
as a topological vector space
(see Theorem~2 in \cite[Chapter I, \S2, no.\,3]{BTV}),
whence $E$ is locally compact.\\[2.5mm]
In the real or complex case, define $a:=1$ and $\zeta\colon ]0,\infty[\,\to\,]0,\infty[$, $\zeta(t):=t$.
If $|.|$ and $d$ are ultrametric,
let $a\in \K$
with $0\!<\!|a|\!<\!1$. Define
$\zeta\colon ]0,\infty[\,\to \K$ via
\begin{equation}
\zeta(t):=a^k\quad\mbox{if $\, k\in \Z\,$ and $\,|a|^{k+1}<t\leq |a|^k$.}
\end{equation}
Thus, in either case,
\begin{equation}\label{qualizeta}
|a|\cdot |\zeta(t)|\;<\; t \;\leq\; |\zeta(t)|\quad\mbox{for all $\,t>0$.}
\end{equation}
Let $D:=\{(x,y)\in K\times K\colon x\not=y\}$ and consider the map
\[
D\to \K\,,\quad (x,y)\mto 
\zeta(d(x,y)^\beta)\,.
\]
The continuity of this map is obvious in the real and complex cases.
In the ultrametric case, continuity follows from the fact that
\[
\{(x,y)\in K\times K\colon d(x,y)=t\}
\]
is open in $K\times K$ for each $t>0$
(cf.\ (\ref{stronger})).\\[2.5mm]
We
equip $C(K,E)$ with $\|.\|_\infty$, let
$\phi_1\colon L_\beta(K,E)\to C(K,E)$ be the inclusion map,
and define
\[
\phi_2\colon L_\beta(K,E)\to BC(D,E)
\]
via $\phi_2(f)(x,y):=\frac{f(y)-f(x)}{\zeta(d(y,x)^\beta)}$.
As a consequence of (\ref{qualizeta}),
\[
\|\phi_2(f)\|_\infty\;\leq\; \Lip_\beta(f)\;\leq\; |a|^{-1}\|\phi_2(f)\|_\infty\quad\mbox{for each $\,f\in L_\beta(K,E)$, whence}
\]
\[
\phi=(\phi_1,\phi_2) \colon L_\beta(K,F)\to C(K,E)\times BC(D,E)
\]
is a topological embedding. Moreover $\phi$ has closed image.
To see this, suppose that $\phi(f_n)\to (f,g)$ as $n\to\infty$.
Then
\[
g(x,y)=\lim_{n\to\infty}\phi_2(f_n)(x,y)
=\lim_{n\to\infty}\frac{f_n(y)-f_n(x)}{\zeta(d(y,x)^\beta)}
=\frac{f(y)-f(x)}{\zeta(d(y,x)^\beta)},
\]
entailing that $f$ is H\"{o}lder with $\Lip_\beta(f)\leq |a|^{-1}\|g\|_\infty$,
and $g=\phi_2(f)$. Thus $(f,g)=\phi(f)$.\\[2.5mm]
Now abbreviate $B :=\{f\in L_\alpha(K,E)\colon \|f\|_\alpha<1\}$,
and let $\wb{B}$ be the closure of~$B$ in $C(K,E)$.
Then
\[
\Lip_\beta(f)\leq \max\{\Lip_\alpha(f),2\|f\|_\infty\}\leq 2\quad\mbox{for all $\, f\in B$,}
\]
using (\ref{desc}).
Given $x\in K$ and $\ve>0$,
let $\delta:=\ve^{\frac{1}{\alpha}}$.
For each $y\in B^K_\delta(x)$ and $f\in B$,
we then have $\|f(y)-f(x)\|\leq\Lip_\alpha(f)\, d(x,y)^\alpha\leq \delta^\alpha=\alpha$.
Thus $B$ is equicontinuous. Since, moreover, $\{f(x)\colon x\in B\}\sub \wb{B}^E_1(0)$
is relatively compact for each $x\in K$, Ascoli's Theorem
shows that $\wb{B}\sub C(K,E)$ is compact. We claim that also $\wb{\phi_2(B)}\sub BC(D,E)$
is compact. If this is true, then $C:=\im(\phi)\cap (\wb{B}\times \wb{\phi_2(B)})$
is compact and hence also $\phi^{-1}(C)$ is compact. Since $B\sub \phi^{-1}(C)$,
this proves the lemma.\\[2.5mm]
To verify the claim, let $\ve>0$ be given. We can choose $\sigma>0$
so small that
\begin{equation}\label{chssig}
2\, \sigma^{\alpha-\beta}\, \leq\, \ve\,.
\end{equation}
We let $D_\sigma$ be the set of all $(x,y)\in K\times K$ such that $\frac{\sigma}{9}\leq d(x,y)\leq 2$.
Since $D_\sigma$ is compact, the continuous map
$\gamma \colon D_\sigma \to\K$, $(x,y)\mto\frac{1}{\zeta(d(x,y)^\beta)}$
is uniformly continuous.
Hence, there exists $\delta>0$ such that
\[
|\gamma(x,y)-\gamma(x', y')|\, \leq \, \ve/3
\]
for all $(x,y), (x',y')\in D_\sigma$
such that $d(x,x') < \delta$ and $d(y,y') < \delta$. After shrinking $\delta$ if necessary,
we may assume that also
\begin{equation}\label{moredelt}
\delta\, \leq\, \sigma/9\quad\mbox{and}\quad
\frac{2\, \delta^\alpha}{(\sigma/3)^\beta}\,\leq\, \ve/3 \,.
\end{equation}
Let $(x,y),
(x',y')\in D$ with $d(x,x)|<\delta$
and $d(y,y')<\delta$. We show that
\begin{equation}\label{remans}
\|\phi_2(f)(x',y')-\phi_2(f)(x,y)\|\,\leq\,\ve\, ,
\end{equation}
for all $f\in B$.
If this is true, then
the function $\phi_2(f)$ is uniformly continuous and hence has a unique continuous
extension $\psi(f)\colon \wb{D}\to E$
to the compact closure $\wb{D}\sub K\times K$.
Letting $(x,y)$ and $(x',y')$ as before pass to limits in $\wb{D}$,
we deduce from  (\ref{remans}) that also
\[
\|\psi(f)(x',y')-\psi(f)(x,y)\|\,\leq\,\ve\, ,
\]
for all $f\in B$, $(x,y)\in \wb{D}$ and $(x',y')\in \wb{D}$
such that $d(x,x')< \delta$ and $d(y,y')<\delta$. Hence
$\Omega:=\{\psi(f)\colon f\in B\}$
is an equicontinuous set of functions in $C(\wb{D},E)$.
Given $(x,y)\in D$, we have
$\|\psi (f)(x,y)\|\leq \Lip_\beta(f)\leq 2$ for each $f\in B$
(and, by continuity, this then also holds for all $(x,y)\in \wb{D}$). 
Hence $\{\psi (f)(x,y)\colon f\in B\}\sub \wb{B}_2^E(0)$
and thus the equicontinuous set $\Omega$ is also pointwise relatively compact.
Hence, by Ascoli's Theorem, $\Omega$ is relatively compact in $C(\wb{D},E)$.
Because the restriction map
\[
C(\wb{D},E)\to BC(D,E)\,\quad h\mto h|_D
\]
is continuous linear and takes $\Omega$ to $B$,
we deduce that also $B$ is relatively compact, as claimed.\\[2.5mm]
It only remains to verify (\ref{remans}).
There are two cases. If $d(y,x)<\sigma/3$,
then $d(y',x') \leq \sigma$ (as we assume that $d(x',x),d(y',y) < \delta\leq \sigma/9$)
and hence
\begin{eqnarray*}
\|\phi_2(f)(x',y')-\phi_2(f)(x,y)\|
&\leq & \|\phi_2(f)(x',y')\|+\|\phi_2(f)(x,y)\|\\
&\leq& \frac{\|f(y')-f(x')\|}{d(y',x')^\beta}+\frac{\|f(y)-f(x)\|}{d(y,x)^\beta}\\
&\leq&  \Lip_\alpha(f)  (d(y', x')^{\alpha-\beta}+d(y,x)^{\alpha-\beta})\\
&\leq&  2\sigma^{\alpha-\beta}\leq \ve\, ,
\end{eqnarray*}
by (\ref{chssig}).
If $d(y,x)\geq \sigma/3$, then $d(y',x')\geq d(y,x)-d(y',y)-d(x',x) \geq \frac{\sigma}{9}$ and
\begin{eqnarray*}
\lefteqn{\|\phi_2(f)(x',y')-\phi_2(f)(x,y)\|}\qquad\qquad\\[1mm]
&\leq&
\frac{\|f(y)-f(x)-f(y')+f(x')\|}{|\zeta(d(y,x)^\beta)|}\\
& & +
\underbrace{\left| \frac{1}{\zeta(d(y',x')^\beta)}- \frac{1}{\zeta(d(y,x)^\beta)}\right| }_{\leq\ve/3}
\underbrace{\|f(y')-f(x')\|}_{\leq 2}\\
&\leq & \frac{1}{d(y,x)^\beta} \Lip_\alpha(f)(d(y,y')^\alpha+d(x,x')^\alpha) +2\ve/3\\[.5mm]
&\leq & \frac{2\delta^\alpha}{(\sigma/3)^\beta} +2\ve/3
\;\leq\; \ve\,,
\end{eqnarray*}
using (\ref{moredelt}) for the final inequality.
\section{Proof of Lemma~\ref{nwnewla}}\label{nowappD}
It suffices to show that the set $P:=\{f\in C^1(U,E)\colon \Lip(f|_K)\leq 1\}$
is a $0$-neighbourhood in $(C^1(U,E),\cO_{C^1})$.
After replacing $K$ with its closure, we may assume that $K$ is compact.
Endow~$K$ with the metric $d(x,y):=\|x-y\|$.
Since~$K$ is compact, we have $s:=\spread(K)<\infty$.
Choose $a\in \K$ such that $0<|a|<1$.
Then
\[
L:=\big\{(x,z,t)\in K\times \wb{B}^E_{\frac{s}{|a|}}(0)\times \wb{B}^\K_{|a|}(0)\colon x+tz\in K\big\}
\]
is a compact subset of $U^{[1]}$
and thus
\[
Q:=\{f\in C^1(U,E)\colon f^{[1]}(K)\sub B^E_{|a|}(0)\}
\]
is a $0$-neighbourhood in $(C^1(U,E),\cO_{C^1})$.
To complete the proof, we now show that $Q\sub P$.
Let $f\in Q$. If $x,y\in U$ such that $x\not=y$,
there is a unique integer $k\in \Z$ such that
\[
|a|^{k+1}<\|y-x\|\leq |a|^k\,.
\]
Define $t:=a^k$. Then $|t|<\frac{1}{|a|}\|y-x\|\leq \frac{s}{|a|}$
and $\|t^{-1}(y-x)\|=\frac{1}{|t|}\|x-y\|\leq 1$.
Since, moreover, $x+t(t^{-1}(y-x))=x+(y-x)=y\in K$,
we see that $(x,t^{-1}(y-x),t)\in L$ and hence
\begin{eqnarray*}
\|f(y)-f(x)\| &= & |t| \, \|t^{-1}(f(x+t(t^{-1}(y-x)))-f(x))\|\\
&=& |t| \, \|f^{[1]}(x, t^{-1}(y-x),t)\|\leq |t|\,|a|\leq \|y-x\|.
\end{eqnarray*}
Thus $\Lip(f|_K)\leq 1$ indeed and thus $f\in P$,
showing that $Q\sub P$.\,\Punkt
{\noindent\footnotesize
{\bf Helge Gl\"{o}ckner}, Universit\"at Paderborn, Institut f\"ur Mathematik,
Warburger Str.\ 100,\\
33098 Paderborn, Germany. E-Mail: {\tt glockner\at{}math.upb.de}
\end{document}